\definecolor{MyBlue}{rgb}{0,0.2,0.6}
\newcommand{\R}{\mathbb{R}}
\newcommand{\C}{\mathbb{C}}
\newcommand{\Z}{\mathbb{Z}}
\newcommand{\mc}{{\Small{MC}}}
\newcommand{\mcH}{{\Small{MC}}$H$}
\newcommand{\mcone}{{\Small{MC}}$1$}
\newcommand{\Lcal}{\mathcal{L}}
\newcommand{\Mcal}{\mathcal{M}}
\newcommand{\N}{\mathbb{N}}
\newcommand{\bone}{\boldsymbol{1}} % bold face type for quaternion 1
\newcommand{\ii}{\boldsymbol{i}}  % bold face type for quaternion i
\newcommand{\jj}{\boldsymbol{j}}  % bold face type for quaternion j
\newcommand{\kk}{\boldsymbol{k}}  % bold face type for quaternion k
\renewcommand{\H}{\mathbb{H}}
\DeclareMathOperator{\id}{id}
\newcommand{\Sigmav}{\Sigma_{\operatorname{v}}}
\newcommand{\Sigmanv}{\Sigma_{\operatorname{nv}}}
\theoremstyle{plain}
\newtheorem*{Conjnn}{Conjecture}
\newtheorem{Theorem}{Theorem}[section]
\newtheorem{Cor}{Corollary}
\newtheorem{Lem}{Lemma}
\newtheorem{Conj}{Conjecture}
\newtheorem*{Notation}{Notation}
\newtheorem*{Question}{Question}
\newtheorem{Obs}{Observation}
\newtheorem{Prop}{Proposition}
\theoremstyle{definition}
\newtheorem*{Def}{Definition}
\newtheorem{Ex}{Example}
\newtheorem{Rem}{Remark}
\newtheoremstyle{exercise}
  {8pt}                   %Space above
  {2pt}                   %Space below
  {\normalfont}           %Body font
  {}                      %Indent amount (empty = no indent,
\theoremstyle{exercise}
\renewcommand*{\c@Lem}{\c@Theorem}
\renewcommand*{\p@Lem}{\p@Theorem}
\renewcommand*{\c@Cor}{\c@Theorem}
\renewcommand*{\p@Cor}{\p@Theorem}
\renewcommand*{\c@Prop}{\c@Theorem}
\renewcommand*{\p@Prop}{\p@Theorem}
\renewcommand*{\c@Ex}{\c@Theorem}
\renewcommand*{\p@Ex}{\p@Theorem}
\renewcommand*{\c@Exer}{\c@Theorem}
\renewcommand*{\p@Exer}{\p@Theorem}
\renewcommand*{\c@Rem}{\c@Theorem}
\renewcommand*{\p@Rem}{\p@Theorem}
\renewcommand*{\c@Conj}{\c@Theorem}
\renewcommand*{\p@Conj}{\p@Theorem}
\def\widebar{\accentset{{\cc@style\underline{\mskip10mu}}}}
\def\Widebar{\accentset{{\cc@style\underline{\mskip8mu}}}}
\begin{document}
\title{On the existence problem for tilted unduloids in $\mathbb{H}^2\times\R$}
%[Tilted unduloids in $\mathbb{H}^2\times\R$]{On the existence problem for tilted unduloids in $\mathbb{H}^2\times\R$}
\date{\today}
\author{Miroslav Vr\v{z}ina}
\address{Technische Universit\"at Darmstadt, Fachbereich Mathematik (AG 3 ``Geometrie und Approximation''),
	Schlossgartenstr.~7, 64289 Darmstadt, Germany}
\email{vrzina@mathematik.tu-darmstadt.de}
\subjclass[2000]{Primary 53A10; Secondary 53C22, 53C30}

\keywords{Differential geometry, invariant surfaces, minimal, constant mean curvature, homogeneous 3-manifolds, geodesics}
\begin{abstract}
	We study the existence problem for tilted unduloids in $\mathbb{H}^2\times\R$. These are singly periodic annuli with constant mean curvature $H>1/2$ in $\mathbb{H}^2\times\R$, and the periodicity of these surfaces is with respect to a discrete group of translations along a geodesic that is neither vertical nor horizontal in the Riemannian product $\mathbb{H}^2\times\R$. Via the Daniel correspondence we are able to reduce this existence problem to a uniqueness problem in the Berger spheres: if a pair of linked horizontal geodesics bounds exactly two embedded minimal annuli (for a fixed orientation of the boundary curves) then tilted unduloids in $\mathbb{H}^2\times\R$ exist.
	%We define a tilted geodesic in $\mathbb{H}^2\times\R$ to be a geodesic that is neither vertical nor horizontal with respect to the Riemannian submersion $\mathbb{H}^2\times\R\to\mathbb{H}^2$. Given $H>\frac{1}{2}$, the magic number up to which compact surfaces with constant mean curvature $H$ exist, and a tilted geodesic in $\mathbb{H}^2\times\R$, we say that a surface is a tilted unduloid if it is a singly periodic Alexandrov embedded \mcH-annulus; the periodicity is with respect to (discrete) translations along the tilted geodesic.  %Applying a minimax principle due to Ji Min we prove existence of at least two such minimal annuli.
\end{abstract}
\maketitle
\section*{Introduction}
Constant mean curvature surfaces (for short \mcH-surfaces) have a rich history. Various existence and uniqueness results for \mcH-surfaces with certain topologies or under assumptions like compactness, embeddedness or properness are known in Euclidean space $\R^3$. For example, for each $H>0$, Delaunay constructed embedded surfaces of revolution with constant mean curvature $H$. Fixing $H>0$ and up to motion of Euclidean three-space, these so-called \emph{unduloids} form a one-parameter family interpolating between a degenerate chain of spheres and a cylinder. Korevaar, Kusner and Solomon proved them to be unique among properly embedded \mcH-annuli \cite{KKS}.

In the present paper we study \mcH-annuli in the Riemannian product $\mathbb{H}^2\times\R$. The space $\mathbb{H}^2\times\R$ is not isotropic and so for the suitably defined axis of our annuli we can distinguish between vertical, horizontal and tilted geodesics: Tilted geodesics admit no isometric rotations fixing them, while horizontal geodesics admit a half-turn rotation, and vertical geodesics have arbitrary rotations. 
Nevertheless, we have a family of isometric translations fixing each of these geodesics. Note that if $h\in\R$ and $\Gamma$ is a geodesic in $\mathbb{H}^2$, then reflections in horizontal planes $\mathbb{H}^2\times\{h\}$ and vertical planes $\Gamma\times\R$ are also isometries of $\mathbb{H}^2\times\R$.

Let us recall the three known examples of properly (Alexandrov) embedded \mcH-annuli with $H>1/2$ in $\mathbb{H}^2\times\R$. \emph{Vertical unduloids} have been constructed by Hsiang and Hsiang in \cite{Hsiang} as \mcH-surfaces invariant under rotations about a vertical geodesic; \emph{horizontal unduloids}, that is, surfaces invariant under half-turn rotations about a horizontal geodesic, were constructed by Manzano and Torralbo in \cite{ManzanoTorralbo} via a conjugate Plateau construction. Each of these examples is periodic along a vertical respectively horizontal geodesic, and is (up to an isometry) part of a one-parameter family degenerating to a cylinder and to a chain of spheres---as is the case for unduloids in $\R^3$. Finally, \emph{tilted cylinders}, invariant under translations along a tilted geodesic have been constructed by Onnis in \cite{Onnis} (see also the author's Ph.D. thesis \cite{VrzinaPhD}).

Open is the existence of properly embedded \mcH-annuli which are periodic but not translation invariant with respect to a tilted geodesic. %In view of the known existence results, we believe in the following analogy to the euclidean situation:

%What remains open regarding \mcH-annuli? The existence of properly embedded \mcH-annuli periodic but not invariant with respect to a tilted geodesic. In view of the vertical and horizontal unduloids in $\mathbb{H}^2\times\R$ we believe the following to be true:
\begin{Conjnn}Let $H>\frac{1}{2}$ and let $\gamma$ be a geodesic in $\mathbb{H}^2\times\R$ with slope $\alpha\in[0,\pi/2]$ relative to a vertical geodesic. Then, up to an isometry of $\mathbb{H}^2\times\R$, there exists a one-parameter family $\left(\Sigma^{c,\alpha}\right)_{c\in(0,1]}$ of properly (Alexandrov) embedded \mcH-annuli which are singly periodic with respect to translations along $\gamma$; we refer to them as \emph{unduloids with axis $\gamma$}. If $\alpha\in(0,\pi/2)$ we call the surfaces \emph{tilted unduloids}.\index{tilted unduloid}\end{Conjnn}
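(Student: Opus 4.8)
The plan is a conjugate Plateau construction governed by the Daniel (sister) correspondence, which is precisely the reduction announced in the abstract. Fix $H>\tfrac{1}{2}$ and write $\H^2\times\R=E(-1,0)$ in the standard notation for the homogeneous $3$-manifolds $E(\kappa,\tau)$. By Daniel's theorem, simply connected $H$-surfaces in $\H^2\times\R$ are isometric to simply connected minimal surfaces in the Berger sphere $E(4H^2-1,H)$ --- the base curvature $4H^2-1$ is positive exactly because $H>\tfrac{1}{2}$ --- and on sister pieces the vertical Killing field is preserved while the shape operators differ by a fixed rotation. The Berger parameter is fixed once $H$ is fixed, so the two parameters of the conjectured family have to be read off the spanning surface in the Berger sphere: the slope $\alpha$ and the size $c$ are to be encoded by the relative position of the linked pair of horizontal geodesics and by the conformal type of the minimal annulus they bound. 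The endpoints $\alpha\in\{0,\tfrac{\pi}{2}\}$ should recover the vertical and horizontal unduloids of Hsiang--Hsiang and of Manzano--Torralbo, so the new content is $\alpha\in(0,\tfrac{\pi}{2})$.

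Next I would construct the fundamental piece. Assume the hypothesis of the reduction: for the relevant configurations a linked pair $c_1\cup c_2$ of horizontal geodesics in the Berger sphere bounds, for a fixed orientation of the boundary, exactly two embedded minimal annuli, and fix one of them, say $\Sigma$. Since $c_1,c_2$ are horizontal geodesics they are at once geodesics and asymptotic lines of $\Sigma$; following this through the rotation of the shape operator --- exactly as in the conjugate constructions of horizontal unduloids and of Onnis's tilted cylinders --- shows that the corresponding boundary curves of the sister piece $\Sigma^*\subset\H^2\times\R$ lie in mirrors of $\H^2\times\R$, and in particular along horizontal geodesics. The half-turn $R_\ell$ about a horizontal geodesic $\ell$ is an ambient isometry, so the Schwarz reflection principle applies; combining these reflections with the period of $\Sigma$ (the monodromy of the sister construction around the core of the annulus) yields a discrete group $G<\Iso(\H^2\times\R)$. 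The key point is the elementary fact that, for two horizontal geodesics $\ell_i=\Gamma_i\times\{h_i\}$ with $\Gamma_1\neq\Gamma_2$ and $h_1\neq h_2$, the composition $R_{\ell_1}R_{\ell_2}$ is a \emph{translation along a tilted geodesic} $\gamma$, with slope and translation length governed by $\dist(\Gamma_1,\Gamma_2)$ and $|h_1-h_2|$; choosing $c_1$ and $c_2$ so that the period of $\Sigma$ is compatible with the reflection axes forces the translation part of $G$ to be this single infinite cyclic group, and then $\Sigma^{c,\alpha}:=G\cdot\Sigma^*$ is a complete $H$-surface that is singly periodic along $\gamma$ rather than accidentally doubly periodic.

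I would then extract the global properties. Developing the fundamental piece under $G$ and recording the identifications along the mirror curves shows that $\Sigma^{c,\alpha}$ is an annulus. Proper Alexandrov embeddedness should come from embeddedness of the Berger annulus together with an Alexandrov moving-plane argument using reflections in vertical planes $\Gamma\times\R$ and horizontal slices $\H^2\times\{h\}$: once the fundamental domain is arranged as a bigraph over a mirror, consecutive reflected copies meet only along their common boundary curve and cannot overlap. Finally I would identify the two ends of the family and check that it is non-constant and continuous in $c$ for each fixed $\alpha\in(0,\tfrac{\pi}{2})$: as $c\to1$ the linked pair degenerates so that $\Sigma$ becomes a ``strip-like'' piece and $\Sigma^{c,\alpha}$ converges to Onnis's translation-invariant tilted cylinder, while as $c\to0$ the two geodesics bound an ever thinner minimal annulus which breaks, and $\Sigma^{c,\alpha}$ converges to a chain of rotationally invariant $H$-spheres strung along $\gamma$; curvature estimates and the known classification of complete $H$-surfaces at these limits make them rigorous, and surjectivity of the assignment $(c_1,c_2)\mapsto(\alpha,c)$ onto $(0,\tfrac{\pi}{2})\times(0,1]$ then follows by continuity from the two endpoints.

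The step I expect to be the genuine obstacle is the one the reduction isolates: the Plateau problem in the Berger sphere. One must first show that a linked pair of horizontal geodesics bounds an embedded minimal annulus at all --- a least-area surface with this boundary exists, but one must rule out that it degenerates, changes topology, or fails to be embedded --- and, more delicately, that there are \emph{exactly two} such annuli for a fixed orientation of the boundary. Uniqueness and finiteness results for the Plateau problem are scarce, and in a Berger sphere one loses the convexity and reflection tools available in space forms; a conceivable route is a maximum-principle foliation argument combined with Shiffman-type holomorphicity for minimal annuli stretched between suitably parallel curves, but turning this into an exact count is the real difficulty, which is why the conjecture can only be established conditionally on this count.
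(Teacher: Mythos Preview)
First, note that the statement is a \emph{conjecture}: the paper does not prove it, but only reduces it (partially, via a perturbation result near the horizontal case) to the uniqueness conjecture for embedded minimal annuli in the Berger sphere. So there is no ``paper's own proof'' to compare with, only a strategy, and your outline should be assessed against that strategy.

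Your outline contains a genuine error in the use of the Daniel correspondence. You write that the sister boundary curves of the horizontal geodesics $c_1,c_2$ in the Berger sphere ``lie in mirrors of $\H^2\times\R$, and in particular along horizontal geodesics'', and you then build the periodicity from compositions $R_{\ell_1}R_{\ell_2}$ of half-turns about horizontal geodesics $\ell_i=\Gamma_i\times\{h_i\}$. This is the wrong direction: in the Daniel correspondence a \emph{horizontal} geodesic on the minimal side corresponds to a \emph{vertical} mirror curve on the \mcH-side, while vertical geodesics (fibres) correspond to horizontal mirror curves (see the paper's Proposition on mirror curves). Hence the sister curves $\widetilde{h_1},\widetilde{h_2}$ lie in vertical planes $P_1,P_2$ of $\H^2\times\R$, and the extension is by reflection in a vertical plane, not by half-turns about horizontal geodesics. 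Your translation formula $R_{\ell_1}R_{\ell_2}$ is therefore not the operative mechanism.

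Correspondingly, the periodicity in the paper does not come from composing reflections across two distinct mirrors. It comes from the annular topology: the geometric data of the embedded minimal annulus $\Sigma_0$ are periodic on the universal cover, and the Fundamental Theorem for surfaces in $E(\kappa,\tau)$ forces the \mcH-sister $\widetilde{\Sigma}$ to be invariant under a single translation $\Phi$ taking $\widetilde{h_1}(0)$ to $\widetilde{h_1}(L)$. One then argues (Proposition on axes of singly periodic surfaces) that for a non-vertical axis the two vertical planes must coincide, $P_1=P_2$, so a \emph{single} reflection closes the annulus. The dichotomy between vertical, horizontal and tilted axes is then decided by the horizontal fields $F_1,F_2$ of $h_1,h_2$: linearly dependent fields give vertical or horizontal unduloids, linearly independent fields force a tilted axis. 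Your outline misses this ``$P_1=P_2$'' step and the role of the horizontal fields, and instead tries to manufacture the tilt from two distinct horizontal reflection axes, which is not what happens. The final paragraph of your outline, on the difficulty of the exact count of embedded minimal annuli, is on the other hand in line with the paper's own discussion.
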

The parameter $c$ corresponds to the \emph{neck size}\index{neck size} of $\Sigma^{c,\alpha}$, that is, $c$ parametrises the length of the shortest closed geodesic on $\Sigma^{c,\alpha}$. The limiting cases should be: $\Sigma^{0,\alpha}$, a degenerate chain of \mcH-spheres aligned along $\gamma$, and $\Sigma^{1,\alpha}$, a cylinder with tilted axis. See \autoref{tiltedcylinderunduloid} for a qualitative sketch of these surfaces. We remark that $H=1/2$ is the \emph{critical mean curvature} in $\mathbb{H}^2\times\R$, which means that compact immersed \mcH-surfaces only exist for $H>1/2$.
%They generalise unduloids from $\R^3$.

%An application of Alexandrov reflection with respect to vertical planes solves the Alexandrov problem in $\mathbb{H}^2\times\R$:
%\begin{center}
%	\begin{figure}[h]
%		%\renewcommand\thefigure{E}
%		\includegraphics[width=0.43\linewidth]{h2rgeodesics.pdf}
%		\caption{A model of $\mathbb{H}^2\times\R$ where $\mathbb{H}^2$ is represented by the Poincar\'{e} disc model. The green, blue and red curves represent a vertical, horizontal and tilted geodesic, respectively.}
%		\label{axes}
%	\end{figure}
%\end{center}
%\begin{Theoremnn}[{{\cite{Hsiang}}}]
%Let $\Sigma$ be an embedded \mcH-sphere in $\mathbb{H}^2\times\R$. Then $H>1/2$ and $\Sigma$ is rotationally invariant about a vertical geodesic. 
%\end{Theoremnn}

%For the annulus problem in $\mathbb{H}^2\times\R$ there is no solution, yet. We recall that a Delaunay unduloid in $\R^3$ is singly periodic with respect to translations along the axis of rotation. The following conjecture due to Meeks is thus the natural generalisation of \cite{KKS} in $\mathbb{H}^2\times\R$:
%\begin{Conjnn}[Meeks, 2010]
%Let $\Sigma$ be a properly (Alexandrov) embedded annulus in $\mathbb{H}^2\times\R$ with $H>1/2$. Then $\Sigma$ stays at a bounded distance from a geodesic of $\mathbb{H}^2\times\R$ and is singly periodic with respect to translations along that geodesic.
%\end{Conjnn}

\begin{center}
	\begin{figure}
		\includegraphics[width=1\linewidth]{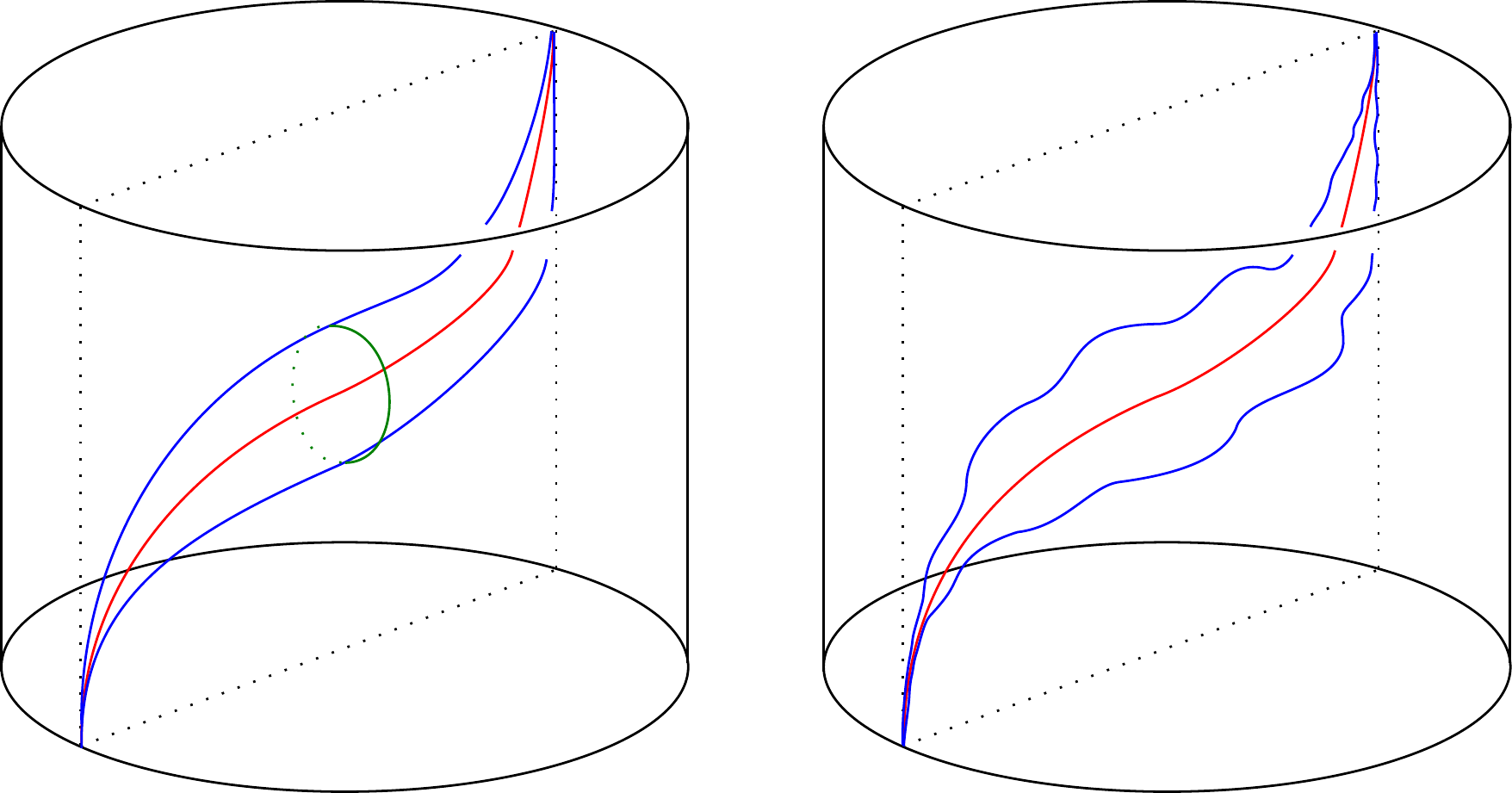}
		\caption{Left: A tilted cylinder in $\mathbb{H}^2\times\R$ is invariant under translations along the geodesic $\gamma$ shown in red: the simple closed curve shown in green generates the cylinder. The vertical plane indicated with dots contains $\gamma$ and intersects the cylinder in the geodesics shown in blue.\\Right: Conjectured intersection pattern of a tilted unduloid with a vertical plane. The blue curves are periodic.}
		\label{tiltedcylinderunduloid}
	\end{figure}
\end{center}

%We consider the following existence problem underlying Meeks' conjecture: 
%\begin{Conjnn}Let $H>\frac{1}{2}$ and let $\gamma$ be a geodesic in $\mathbb{H}^2\times\R$ with slope $\alpha\in[0,\pi/2]$ with respect to a vertical geodesic. Then there exists a one-parameter family $\left(\Sigma^{c,\alpha}\right)_{c\in(0,1)}$ of properly (Alexandrov) embedded \mcH-annuli which are singly periodic with respect to translations along $\gamma$; we refer to them as \emph{unduloids with axis $\gamma$}. If $\alpha\in(0,\pi/2)$ we call the surfaces \emph{tilted unduloids}.\index{tilted unduloid} 
%
%The parameter $c$ corresponds to the \emph{neck size}\index{neck size} of $\Sigma^{c,\alpha}$, that is, $c$ parametrises the length of the shortest closed geodesic on $\Sigma^{c,\alpha}$. The limiting cases are: $\Sigma^{0,\alpha}$, a chain of \mcH-spheres aligned along $\gamma$, and $\Sigma^{1,\alpha}$, a cylinder with tilted axis. Up to isometry these are all such surfaces.\end{Conjnn}
%
%See \autoref{tiltedcylinderunduloid} for a qualitative sketch of these surface. There are known examples of singly periodic \mcH-annuli when the geodesic is vertical (see \cite{Hsiang}) or horizontal (see \cite{ManzanoTorralbo}) but examples with a tilted axis have not been established.

%Ultimately, we are unable to prove this conjecture or assert existence of tilted unduloids in $\mathbb{H}^2\times\R$, but we reduce this existence problem to a very concrete assumption we believe to be true

With the present paper we suggest to study the conjecture with the tool of the Daniel correspondence. We do not prove the conjecture, but we show two results which reduce the existence problem for tilted unduloids in $\mathbb{H}^2\times\R$ to a uniqueness problem for embedded minimal annuli bounded by linked horizontal geodesics in a Berger sphere.

%Although we do not prove the conjecture in the present paper, we can state a property equivalent to the conjecture being true for certain parameters. If we assume that every pair of linked horizontal geodesics bounds exactly two minimal annuli in the Berger spheres (for a fixed orientation of the boundary curves), then---using the Daniel correspondence---we are able to prove a perturbation result in \autoref{perturbationresult}: Tilted unduloids exist as a perturbation of horizontal unduloids.

To obtain these results we first study necessary conditions for the existence of unduloids in $\mathbb{H}^2\times\R$. Alexandrov reflection as in \cite{KKS} shows that an unduloid in $\mathbb{H}^2\times\R$ has a vertical mirror plane separating it into two halves. Each half is an \mcH-surface with two boundary curves contained in the plane, as illustrated in \autoref{tiltedcylinderunduloid}. A method for the construction of \mcH-surfaces with symmetries is the conjugate Plateau construction. It is based on the Daniel correspondence: One half of an unduloid in $\mathbb{H}^2\times\R$ is a disk corresponding to a minimal surface bounded by a pair of horizontal geodesics in a Berger sphere; these horizontal geodesic circles must be covered infinitely often. This follows from the Daniel correspondence, which we review in \autoref{section1} alongside our notation for the Berger spheres.

It is tempting to guess that the conjectured two-parameter family of unduloids in $\mathbb{H}^2\times\R$ corresponds to a two-parameter family of horizontal geodesics as boundary of a uniquely determined minimal surface in a Berger sphere. However, the problem turns out to be more involved as the boundary circles bound several minimal surfaces. In general, for a conjugate Plateau construction, we need to answer the following questions: 1. What are the correct boundary contours? 2. What kind of minimal surface bounded by these contours are we looking for specifically? The answers are given in \autoref{section2} as the first result of this paper. 

In order to answer these questions we revisit the known examples of unduloids in $\mathbb{H}^2\times\R$ whose axes are vertical or horizontal. In case of one half of a horizontal unduloid the corresponding horizontal geodesics in the Berger spheres are linked and must have---what we call---linearly dependent horizontal fields. This answers the first question:
\begin{enumerate}
	\item[] The pair of horizontal geodesics in the Berger sphere are linked and must have linearly independent horizontal fields.
\end{enumerate}
These contours are unrelated to the angle $\alpha$ of the axis. Indeed, any pair of linked horizontal geodesics in the Berger spheres bounds an embedded minimal annulus, namely a \emph{spherical helicoid}, also introduced in \autoref{section1}. The corresponding surface in $\mathbb{H}^2\times\R$ is a vertical unduloid. Therefore, giving the answer to problem 2, we can state:

\begin{enumerate}
	%\item The pair of horizontal geodesics in the Berger sphere must have linearly independent horizontal fields.
	\item[{}] The desired surface is an embedded minimal annulus bounded by these great circles different from the spherical helicoid (which corresponds to vertical unduloids).
	%\item These great circles must bound a minimal annulus different from the \emph{spherical helicoid}, because otherwise we would be restricted to vertical unduloids.
\end{enumerate}
How can multiple solutions be obtained? One option is to construct them explicitly, which is possible for linearly dependent horizontal fields. In \autoref{examplehorizontal} we prove an explicit multiple solution theorem:  there is a one-parameter family of linked horizontal geodesics with linearly dependent horizontal fields that bounds two minimal annuli, one corresponding to one half of a horizontal unduloid and the other one to a piece of a vertical unduloid. These minimal annuli are in fact embedded. The case of linearly independent horizontal fields does not seem to admit such an explicit construction. That is why we discuss a minimax principle due to Min Ji which yields that any pair of linked horizontal geodesics in a Berger sphere bounds at least two minimal annuli, a ``min`` and a ``minimax``. This minimax principle does not give information on the embeddedness of the solutions.

The embedded minimal annulus we are interested in is not explicit and thus we cannot control the geometry of the conjugate \mcH-surface in $\mathbb{H}^2\times\R$. We overcome this difficulty by employing a continuity method in \autoref{section3}. This is done in two steps. We first formulate three hypotheses (H1) to (H3) for an embedded minimal annulus bounded by linked horizontal geodesics. In \autoref{tiltedsolution} we show that these hypotheses are sufficient to yield a tilted unduloid in $\mathbb{H}^2\times\R$. The proof of this lemma only assumes basic knowledge about the Daniel correspondence and is independent from most of the details of \autoref{section2}. Then we discuss how these hypotheses can be satisfied, based on the contours determined in \autoref{section21}. The sister surface of one half of a horizontal unduloid (or of a horizontal cylinder) naturally satisfies (H1) to (H3). The hypotheses are preserved under slight continuous deformations of this sister surface. If we assume that every pair of linked horizontal geodesics bounds exactly two embedded minimal annuli these continuous deformations exist, and in \autoref{perturbationresult} we prove the existence of tilted unduloids as a perturbation of horizontal unduloids.

%The restrictions for the minimal annulus (namely being different from a spherical helicoid) are too abstract to tell anything interesting about the geometry of the conjugate \mcH-surface in $\mathbb{H}^2\times\R$. 

%We devote the final section to prove the reduction theorem stated before. We  
%Then we discuss how these how these hypotheses can be satisfied and prove \autoref{perturbationresult}, which is based on the contours we found in Subsection 2.1.
%Then we discuss how these hypotheses can be satisfied, based on the contours we found in Subsection 2.1. If we assume that every pair of linked horizontal geodesics bounds exactly two minimal annuli we are able to prove a perturbation result in \autoref{perturbationresult}: Tilted unduloids exist as a perturbation of horizontal unduloids

We note that uniqueness is an open problem: Meeks conjectured that properly (Alexandrov) embedded \mcH-annuli with $H>1/2$ are unduloids in $\mathbb{H}^2\times\R$; see \cite[Conjecture 4.22 (2)]{MP}. This would be the natural generalisation from \cite{KKS} to $\mathbb{H}^2\times\R$. In fact, this initiated the study of the existence problem in this paper.

\subsubsection*{Acknowledgement}This work extends upon a part of the author's Ph.D. thesis at TU Darmstadt. The author would like to thank his advisor Karsten Gro\ss{}e-Brauckmann for guidance and suggestions throughout the preparation of this paper, Rob Kusner for helpful discussions, and a referee for useful comments that helped clarifying important aspects.

\section{Preliminaries on Berger spheres and singly periodic surfaces}\label{section1}
In order to discuss the Daniel correspondence which relates surfaces with constant mean curvature $H>1/2$ in $\mathbb{H}^2\times\R$ to minimal surfaces in the Berger spheres, let us first review the Berger spheres. Moreover we will discuss periodicity and study symmetries of singly periodic \mcH-annuli.
\subsection{Geometry of the Berger spheres}\label{chapter31}
In the following let $\kappa$ and $\tau$ be real numbers with $\kappa>0$ and $\tau\neq0$. In this case the simply connected homogeneous $3$-manifold $E(\kappa,\tau)$ is compact. We discuss a model and geometric properties.

\subsubsection{Model of Berger spheres and isometries}Consider the standard three-sphere \[\mathbb{S}^3:=\{p\in\R^4\colon\lvert p\rvert_{\R^4}^2=1\}\] and let $V$ be the matrix
\[
V=\begin{pmatrix}0&-1&0&0\\
1&0&0&0\\
0&0&0&1\\
0&0&-1&0                                                                                                                                                      \end{pmatrix},
\]
which induces the vector field
\[
V(a,b,c,d)=(-b,a,d,-c)
\]
on $\mathbb{S}^3$. We endow $\mathbb{S}^3$ with the metric
\begin{equation}g_{\kappa,\tau}(X,Y):=\frac{4}{\kappa}\left[\langle X,Y\rangle_{\R^4}+\bigg(\frac{4\tau ^2}{\kappa}-1\bigg)\langle X,V\rangle_{\R^4}\langle Y,V\rangle_{\R^4}\right]\label{eq:bergermetric}.\end{equation}\index{Berger metric}
A \emph{Berger sphere}\index{Berger sphere} is the Riemannian space $\mathbb{S}^3(\kappa,\tau):=(\mathbb{S}^3,g_{\kappa,\tau})$. Up to scaling of the metric, the Berger spheres form a one-parameter family of spaces: If we set $\eta:=\frac{4\tau^2}{\kappa}$ and scale $g_{\kappa,\tau}$ by $\frac{\kappa}{4}$ the metric depends only on $\eta\in(0,\infty)$.

%\index{manifold $\mathbb{S}^3(\kappa,\tau)$}
%Here the vector field $V$ is defined by $V(z,w)=(iz,-iw)$. A \emph{Berger sphere} is then a pair $(\mathbb{S}^3,g_{\kappa,\tau})$ and we set $\mathbb{S}^3(\kappa,\tau):=(\mathbb{S}^3,g_{\kappa,\tau})$. 

%\begin{Rem}\end{Rem}

The isometries of $\mathbb{S}^3(\kappa,\tau)$ can be described as follows, see \cite[Section 2]{Torralbo2}:
\begin{equation}
\operatorname{Iso}\left(\mathbb{S}^3(\kappa,\tau)\right)=\begin{cases}
\mathsf{O}(4)&\mbox{if }\kappa=4\tau^2,\\
\{A\in\mathsf{O}(4)\colon AV=\pm VA\}&\mbox{if }\kappa\neq4\tau^2.
\end{cases}\label{isometry}
\end{equation}

\subsubsection{Structure of a metric Lie group}The quaternions\index{quaternions} is the skew-field $\R^4$ considered with basis
$\bone$, $\ii$, $\jj$, $\kk$ and bilinear product satisfying the relations 
\[
\bone\ii=\ii,\quad \ii^2=\jj^2=\kk^2=-\bone,\quad \ii\jj\kk=-\bone.
\]
It is well-known that this bilinear product induces a group structure on $\mathbb{S}^3$. For $p\in\mathbb{S}^3$ we consider the map
\[
\mathcal{L}_p\colon\mathbb{S}^3\to\mathbb{S}^3,\qquad 
\mathcal{L}_p(q):=pq.
\]
% Let $p=a\bone+b\ii+c\jj +d\kk $, then $L_p$ is represented by the matrix \[
%                                                      A_p=\begin{pmatrix}
%                                                           a&-b&-c&-d\\
% b&a&-d&c\\
% c&d&a&-b\\
% d&-c&b&a                                                         \end{pmatrix}
%                                                     \]
% with respect to the basis $\{\bone,\ii,\jj,\kk\}$. 
One can check that $\mathcal{L}_p$ is in $\mathsf{O}(4)$ and satisfies $\mathcal{L}_pV=V\mathcal{L}_p$, that is,  in view of \eqref{isometry} the map $\Lcal_p$ is an isometry of $\mathbb{S}^3(\kappa,\tau)$ and thus a Berger sphere is a metric Lie group.%We have $A_pV=VA_p$ because the $2\times 2$ blocks of $A_p$ and $V$ commute, that is,
% \[
%  \begin{pmatrix}
%   a&-b\\
% b&a
%  \end{pmatrix}\begin{pmatrix}0&-1\\
% 1&0\end{pmatrix}=\begin{pmatrix}0&-1\\
% 1&0\end{pmatrix}\begin{pmatrix}
%   a&-b\\
% b&a
%  \end{pmatrix}.
% \]

%This is an associative but non-commutative multiplication on $\R^4$ so that $\R^4$ is a skew field, the \emph{quaternions}.

\subsubsection{Identification with other models}The quaternions have a representation as complex $2\times2$ matrices since
\[
\R^4\to \mathsf{M}_2(\C),\qquad a\bone+b\ii+c\jj+d\kk\mapsto\begin{pmatrix}
a+bi&c+di\\
-c+di&a-bi
\end{pmatrix}
\]
is an injective ring homomorphism. The image of $\mathbb{S}^3\subset\R^4$ is the group $\mathsf{SU}(2)$. Identifying $\mathsf{SU}(2)$ with $\mathbb{S}^3=\{(z,w)\in\C^2\colon\lvert z\rvert^2+\lvert w\rvert^2=1\}$ the group structure is as follows:
\[
(z_1,w_1)(z_2,w_2)=(z_1z_2-w_1\widebar{w_2},z_1w_2+w_1\widebar{z_2})\quad\text{ and }\quad (z,w)^{-1}=(\widebar{z},-w).
\]
The neutral element is $(1,0)$ and left translations have the following representation:
\[
\Lcal_{(z_1,w_1)}\colon\mathbb{S}^3(\kappa,\tau)\to\mathbb{S}^3(\kappa,\tau),\qquad\Lcal_{(z_1,w_1)}(z_2,w_2):=(z_1,w_1)(z_2,w_2).
\]
%are isometries, that is, $\mathbb{S}^3(\kappa,\tau)$ is a metric Lie group.

\subsubsection{Orthonormal frame and Hopf fibration}At the identity $\bone\in\mathbb{S}^3$ the vectors $\jj$, $\kk$ and $\ii$ are orthogonal. Since left translations are isometries we obtain a global orthonormal frame by setting
%\begin{equation}\begin{array}{l}E_1(p):=\frac{\sqrt{\kappa}}{2}\Lcal_{p}(\jj)=\frac{\sqrt{\kappa}}{2}(-c,-d,a,b)=\frac{\sqrt{\kappa}}{2}(-w,z),\\E_2(p):=\frac{\sqrt{\kappa}}{2}\Lcal_{p}(\kk)=\frac{\sqrt{\kappa}}{2}(-d,c,-b,a)=\frac{\sqrt{\kappa}}{2}(iw,iz),\\ \xi(p):=\frac{\kappa}{4\tau}\Lcal_{p}(\ii)=\frac{\kappa}{4\tau} (-b,a,d-c)=\frac{\kappa}{4\tau}(iz,-iw),\end{array}\label{BergerONBleft}\end{equation}
\begin{align}E_1(p):=\frac{\sqrt{\kappa}}{2}\Lcal_{p}(\jj)&=\frac{\sqrt{\kappa}}{2}(-c,-d,a,b)=\frac{\sqrt{\kappa}}{2}(-w,z),\nonumber\\
E_2(p):=\frac{\sqrt{\kappa}}{2}\Lcal_{p}(\kk)&=\frac{\sqrt{\kappa}}{2}(-d,c,-b,a)=\frac{\sqrt{\kappa}}{2}(iw,iz),\label{BergerONBleft}\\ 
\xi(p):=\frac{\kappa}{4\tau}\Lcal_{p}(\ii)&=\frac{\kappa}{4\tau} (-b,a,d-c)=\frac{\kappa}{4\tau}(iz,-iw),\nonumber\end{align}\index{orthonormal frame in $\mathbb{S}^3(\kappa,\tau)$}
where we identify $p=a\bone+b\ii +c\jj +d\kk \in\R^4$ with $p=(a+ib,c+id)=(z,w)\in\mathbb{C}^2$.
%In terms of quaternions this frame satisfies
%\[E_1(p)=\frac{\sqrt{k}}{2}p\j,\qquad E_2(p)=\frac{\sqrt{k}}{2}p\k,\qquad E_3(p)=\frac{\kappa}{4\tau} p\i.\]

%The vector fields $E_1$ and $E_2$ are horizontal and span the horizontal space at $p$.

Using this orthonormal frame and the definition of the Riemannian metric on $\mathbb{S}^3(\kappa,\tau)$ we get the following expression of the Levi-Civita connection:
\[\begin{array}{lll}\nabla_{E_1}E_1=0,&\nabla_{E_1}E_2=\tau \xi ,&\nabla_{E_1}\xi=-\tau E_2,\\
\nabla_{E_2}E_1=-\tau \xi,& \nabla_{E_2}E_2=0,&\nabla_{E_2}\xi=\tau E_1,\\
\nabla_{\xi}E_1=\left(\frac{\kappa}{2\tau}-\tau\right) E_2,&\nabla_{\xi}E_2=-\left(\frac{\kappa}{2\tau}-\tau\right)E_1,& \nabla_{\xi}\xi=0.
\end{array} 
\]
We introduce the Hopf fibration:
\begin{Prop}\label{hopffibration}The \emph{Hopf fibration}\index{Hopf fibration}
	\begin{equation}\Pi\colon \mathbb{S}^3(\kappa,\tau)\to\mathbb{S}^2(\kappa),\qquad \Pi(z,w):=\frac{1}{\sqrt{\kappa}}\left(-2izw,\lvert z\rvert^2-\lvert w\rvert^2\right),\label{hopffibrationleft}\end{equation}
	where $\mathbb{S}^2(\kappa)$ denotes the two-sphere of radius $\frac{1}{\sqrt{\kappa}}$, is a Riemannian submersion whose fibres are geodesics. The vertical unit Killing vector field is given by $\xi$. The horizontal space of this submersion is spanned by $E_1$ and $E_2$. Moreover, the base space $\mathbb{S}^2(\kappa)$ has constant sectional curvature $\kappa$ and the bundle curvature of the fibration is $\tau$.
\end{Prop}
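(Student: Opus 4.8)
The plan is to deduce every assertion from the explicit formula \eqref{hopffibrationleft}, the frame \eqref{BergerONBleft}, and the Levi-Civita table just displayed. First I would check that $\Pi$ really maps into $\mathbb{S}^2(\kappa)$: on $\mathbb{S}^3$ one has $\lvert{-2izw}\rvert^2+(\lvert z\rvert^2-\lvert w\rvert^2)^2=4\lvert z\rvert^2\lvert w\rvert^2+(\lvert z\rvert^2-\lvert w\rvert^2)^2=(\lvert z\rvert^2+\lvert w\rvert^2)^2=1$, so $\Pi(z,w)$ has $\R^3$-norm $1/\sqrt\kappa$. Next I would identify the fibres. Integrating $\dot p=\xi(p)$ in the complex coordinates gives the one-parameter group $t\mapsto(e^{i\kappa t/(4\tau)}z,\,e^{-i\kappa t/(4\tau)}w)$, along which both $zw$ and $\lvert z\rvert^2-\lvert w\rvert^2$ are constant; hence $\xi\in\ker d\Pi$. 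Conversely, combining $\Pi(z,w)=\Pi(z',w')$ with $\lvert z\rvert^2+\lvert w\rvert^2=1=\lvert z'\rvert^2+\lvert w'\rvert^2$ forces $(z',w')$ onto this orbit, so the fibres of $\Pi$ are exactly the orbits of this $\mathbb{S}^1$-action, i.e.\ the integral curves of $\xi$. From the table $\nabla_\xi\xi=0$, so the fibres are geodesics; moreover $X\mapsto\nabla_X\xi$ is skew with respect to $g_{\kappa,\tau}$ (a two-line check on the frame $E_1,E_2,\xi$), so $\xi$ is a Killing field, which has unit length because $g_{\kappa,\tau}(\ii,\ii)=16\tau^2/\kappa^2$ at the identity and left translations are isometries.

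To see that $\Pi$ is a Riemannian submersion I would compute $d\Pi$ on the frame at the identity $(1,0)$, where $E_1=\tfrac{\sqrt\kappa}{2}(0,1)$ and $E_2=\tfrac{\sqrt\kappa}{2}(0,i)$ in $\C^2$. A direct computation gives $d\Pi(E_1)=(-i,0)$ and $d\Pi(E_2)=(1,0)$ in $\C\times\R=\R^3$: these are orthonormal vectors tangent to $\mathbb{S}^2(\kappa)$ at $\Pi(1,0)=\tfrac{1}{\sqrt\kappa}(0,1)$, while $d\Pi(\xi)=0$. Since $(E_1,E_2,\xi)$ is orthonormal, $d\Pi$ restricts to a linear isometry from the $g_{\kappa,\tau}$-orthogonal complement of $\R\xi$ onto $T_{\Pi(1,0)}\mathbb{S}^2(\kappa)$ with its round metric. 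To propagate this to every point I would use equivariance: left translations $\Lcal_p$ are isometries of $\mathbb{S}^3(\kappa,\tau)$ that preserve the fibres (because left multiplication commutes with the right $\mathbb{S}^1$-action cutting out the fibres), hence descend to maps $\overline{\Lcal_p}$ of $\mathbb{S}^2(\kappa)$ with $\Pi\circ\Lcal_p=\overline{\Lcal_p}\circ\Pi$; since they act transitively on $\mathbb{S}^3(\kappa,\tau)$, the one-point computation holds everywhere. Thus $\Pi$ is a Riemannian submersion, its horizontal distribution is $\operatorname{span}(E_1,E_2)$, its base carries the standard round metric of the sphere of radius $1/\sqrt\kappa$, and therefore has constant sectional curvature $\kappa$.

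Finally, the bundle curvature is read directly off the Levi-Civita table: $[E_1,E_2]=\nabla_{E_1}E_2-\nabla_{E_2}E_1=2\tau\xi$, equivalently $\nabla_{E_1}\xi=-\tau E_2$ and $\nabla_{E_2}\xi=\tau E_1$, so the failure of the horizontal distribution to be integrable is $2\tau$ times the unit vertical field; this is precisely the assertion that the bundle curvature equals $\tau$. I do not expect a genuine obstacle here---the proposition is a bookkeeping exercise---and the only step requiring a little care is verifying that the quotient metric the submersion induces on $\mathbb{S}^2(\kappa)$ is exactly the standard round metric of radius $1/\sqrt\kappa$; this is what the explicit one-point computation of $d\Pi$ achieves, after which homogeneity closes the argument.
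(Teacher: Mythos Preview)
Your proposal is correct and is precisely the ``elementary computation in terms of quaternions'' that the paper alludes to without spelling out; the paper gives no further details, so your explicit verification of the image, the fibres, the Riemannian-submersion property via $d\Pi(E_1),d\Pi(E_2),d\Pi(\xi)$ at the identity together with homogeneity, and the bundle curvature via $[E_1,E_2]=2\tau\xi$ fills in exactly what is intended. The only point to make fully watertight is that the induced maps $\overline{\Lcal_p}$ act by isometries of the \emph{standard} round metric on $\mathbb{S}^2(\kappa)$ (not merely of the quotient metric), which follows since this action is the usual $\mathsf{SU}(2)\to\mathsf{SO}(3)$ action by rotations.
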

A proof of this proposition is an elementary computation in terms of quaternions.
\begin{Def}We call $\xi$ a \emph{Hopf field}\index{Hopf field}. For $a,b\in\R$ with $a^2+b^2=1$ we call the linear combination $F=aE_1+bE_2$ a \emph{horizontal field}\index{horizontal field}. An integral curve of $\xi$ or $F$ is called a \emph{Hopf circle}\index{Hopf circle} or a \emph{(horizontal) $F$-circle}\index{horizontal circle}, respectively.\end{Def}
\subsubsection{Vertical and horizontal geodesics}\label{subgeodesics}Let $p=(z,w)$ be in $\mathbb{S}^3(\kappa,\tau)$. Then
\begin{equation}v(s):=p\cos\left(\frac{\kappa}{4\tau}s\right)+ \frac{4\tau}{\kappa}\xi(p)\sin\left(\frac{\kappa}{4\tau}s\right)=\begin{pmatrix}\exp\left(i\frac{\kappa}{4\tau}s\right)z\\
\exp\left(-i\frac{\kappa}{4\tau}s\right)w                                                                                                  \end{pmatrix}
\label{vgeodesic}\end{equation}\index{vertical geodesic}\index{Hopf circle}
parametrises a vertical unit-speed geodesic through $p$. Since $\nabla_{\xi}\xi=0$, this claim about $v$ follows from $\xi(v(s))=v^\prime(s)$, that is, $v$ is an integral curve of $\xi$. It is a fibre since it projects to the point $\Pi(v(s))=\frac{1}{\sqrt{\kappa}}(-2izw,\lvert z\rvert^2-\lvert w\rvert^2)$.

Given the horizontal field $F=F_\varphi=\cos(\varphi)E_1+\sin(\varphi)E_2$, the curve \begin{align}h(t)&:=p\cos\left(\frac{\sqrt{\kappa}}{2}t\right)+ \frac{2}{\sqrt{\kappa}}F_\varphi(p)\sin\left( \frac{\sqrt{\kappa}}{2}t\right)\nonumber\\
&=\begin{pmatrix}\cos\left(\frac{\sqrt{\kappa}}{2}t\right)z-\sin\left(\frac{\sqrt{\kappa}}{2}t\right)\exp(-i\varphi)w\\
\cos\left(\frac{\sqrt{\kappa}}{2}t\right)w+\sin\left(\frac{\sqrt{\kappa}}{2}t\right)\exp(i\varphi)z                                      \end{pmatrix}
\label{hgeodesic}\end{align}
parametrises a horizontal unit-speed geodesic through $p$ with tangent vector $F_\varphi$. This is a consequence of $\nabla_{F_\varphi}F_\varphi=0$ and $F_\varphi(h(t))=h^\prime(t)$.\index{horizontal geodesic}

We read off the lengths of the Hopf circle\index{Hopf circle} $v$ and of the horizontal $F$-circle\index{horizontal circle} $h$.
\begin{Obs}Vertical and horizontal geodesics have the following respective lengths:
	\begin{equation}
	\operatorname{length}(v)=\frac{8\tau\pi}{\kappa}\quad\mbox{ and }\quad\operatorname{length}(h)=\frac{4\pi}{\sqrt{\kappa}}.\label{lengthgeodesic}
	\end{equation}\end{Obs}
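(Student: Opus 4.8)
The plan is to read the two lengths off the closed-form parametrisations \eqref{vgeodesic} and \eqref{hgeodesic} directly. Both $v$ and $h$ have already been shown to be unit-speed (from $\xi(v(s))=v'(s)$ together with $\nabla_\xi\xi=0$, respectively from $F_\varphi(h(t))=h'(t)$ together with $\nabla_{F_\varphi}F_\varphi=0$), so in each case the length of the closed geodesic equals the minimal period of the parametrisation, provided the parametrisation is injective on one period.

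For the Hopf circle I would use the second line of \eqref{vgeodesic}, $v(s)=\bigl(\exp(i\tfrac{\kappa}{4\tau}s)\,z,\ \exp(-i\tfrac{\kappa}{4\tau}s)\,w\bigr)$. Since $(z,w)\in\mathbb{S}^3$, at least one of $z,w$ does not vanish; comparing that coordinate of $v(s_1)$ and $v(s_2)$ shows $v(s_1)=v(s_2)$ if and only if $\exp\!\bigl(i\tfrac{\kappa}{4\tau}(s_1-s_2)\bigr)=1$, i.e. $\tfrac{\kappa}{4\tau}(s_1-s_2)\in2\pi\Z$. Hence $v$ restricted to $\bigl[0,\tfrac{8\tau\pi}{\kappa}\bigr)$ is an injective arc-length parametrisation of the fibre, and $\operatorname{length}(v)=\tfrac{8\tau\pi}{\kappa}$ (with the standing convention $\tau>0$; in general one reads $\tfrac{8\lvert\tau\rvert\pi}{\kappa}$).

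For the horizontal $F$-circle I would rewrite $h$ as $h(t)=\cos\!\bigl(\tfrac{\sqrt\kappa}{2}t\bigr)\,p+\sin\!\bigl(\tfrac{\sqrt\kappa}{2}t\bigr)\,q$ with $q:=\tfrac{2}{\sqrt\kappa}F_\varphi(p)$. From the frame formulas \eqref{BergerONBleft} one gets $\langle E_i(p),E_j(p)\rangle_{\R^4}=\tfrac{\kappa}{4}\delta_{ij}$ and $\langle p,E_i(p)\rangle_{\R^4}=0$, so $\lvert q\rvert_{\R^4}=1$ and $p\perp_{\R^4}q$; thus $h$ traverses the round great circle of $\mathbb{S}^3\subset\R^4$ through the orthonormal pair $\{p,q\}$. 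Consequently $h(t_1)=h(t_2)$ precisely when $\tfrac{\sqrt\kappa}{2}(t_1-t_2)\in2\pi\Z$, so $h$ restricted to $\bigl[0,\tfrac{4\pi}{\sqrt\kappa}\bigr)$ is an injective arc-length parametrisation, and $\operatorname{length}(h)=\tfrac{4\pi}{\sqrt\kappa}$. As a consistency check: since $h$ is horizontal, $h'$ is $\langle\cdot,\cdot\rangle_{\R^4}$-orthogonal to $V$, so by \eqref{eq:bergermetric} the metric $g_{\kappa,\tau}$ restricts to $\tfrac{4}{\kappa}\langle\cdot,\cdot\rangle_{\R^4}$ along $h$, and the $g_{\kappa,\tau}$-length is $\tfrac{2}{\sqrt\kappa}$ times the Euclidean length $2\pi$ of the great circle.

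There is no real obstacle here: the computation is elementary. The only points requiring care are that the displayed curves are genuinely unit-speed (recorded just above) and that neither curve closes up before completing a full period, which is exactly the injectivity claim --- handled via a nonvanishing coordinate of $(z,w)$ for $v$ and via the linear independence of $p$ and $q$ for $h$.
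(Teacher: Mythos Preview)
Your argument is correct and is exactly what the paper means by ``we read off the lengths'': both curves are unit-speed by the remarks preceding the observation, and you determine the minimal period of each from the explicit formulas \eqref{vgeodesic} and \eqref{hgeodesic}. The paper gives no proof beyond that one sentence, so your write-up simply supplies the details of the same approach.
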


\subsubsection{An important minimal surface: the spherical helicoid}Let $h_1$ and $h_2$ be horizontal geodesics in $\mathbb{S}^3(\kappa,\tau)$. It is natural to ask the following question: Is there an (embedded) minimal surface bounded by $h_1$ and $h_2$? %In the Appendix we will answer the question in general for two disjoint Jordan curves in a compact Riemannian $3$-manifold. 

The next proposition establishes an explicit solution as long as $h_1$ and $h_2$ are either identical or linked. If they are linked then they bound a \emph{spherical helicoid}\index{spherical helicoid} with a vertical axis $v$ joining $h_1$ and $h_2$; the rulings are horizontal geodesics which rotate (with constant angular speed) about the axis. Letting the pitch of such a helicoid go to $0$ we arrive at the case that $h_1$ and $h_2$ are identical. Then $h_1=h_2$ bounds a so-called \emph{horizontal umbrella}\index{horizontal umbrella}.

For the upcoming sections it is useful to describe these helicoids explicitly:
\begin{Prop}\label{bergerhelicoid}Let $h_1$ and $h_2$ be identical or linked horizontal geodesics in $\mathbb{S}^3(\kappa,\tau)$. %Then 
\begin{enumerate}[(i)]
 \item There are $\ell\in(0,8\tau\pi/\kappa)$ and $\varphi\in[0,2\pi)$ such that the parametrisation
	\begin{equation}
	f\colon\R\times[0,1]\to\mathbb{S}^3(\kappa,\tau),\qquad f(x,y):=\begin{pmatrix}\cos\left(\frac{\sqrt{\kappa}}{2}x\right)\exp\left(\pm i \frac{\kappa}{4\tau}\ell y\right)\\
	\sin\left(\frac{\sqrt{\kappa}}{2}x\right)\exp\Big(i\left(\varphi \pm\frac{\kappa}{4\tau}\ell\right)y\Big)\end{pmatrix}\mbox{,}\label{helicoidpar}
	\end{equation}
defines, up to an ambient isometry, an immersed minimal annulus bounded by the linked horizontal geodesics $h_1=f(\cdot,0)$ and $h_2=f(\cdot,1)$. We call $\ell$ the \emph{pitch} and $\varphi\in[0,2\pi]$ the \emph{angle} of the \emph{spherical helicoid}. For $\ell=0$ we choose $\varphi=\pi$ and obtain an immersed \emph{horizontal umbrella} on $(0,\pi/\sqrt{\kappa})\times[0,1]$.

\item The parametrisation \eqref{helicoidpar} defines an embedding on $[0,2\pi)\times[0,1]$ respectively on $[0,\pi/\sqrt{k})\times[0,1]$ if and only if $\ell\in(0,4\tau\pi/\kappa]$ respectively $\ell=0$. 

\item For any choice of $h_1$ and $h_2$ there exists $\ell$ as in (ii), and for prescribed orientations of $h_1$ and $h_2$ the embedded spherical helicoid bounded by $h_1$ and $h_2$ is unique. Changing the orientation of either $h_1$ or $h_2$ gives another embedded minimal annulus.
\end{enumerate}
%The Hopf projections $\Pi\circ h_1$ and $\Pi\circ h_2$ enclose an angle of $\varphi\pm\frac{\kappa}{2\tau}\ell$.  %The restriction of $f$ to $[0,2\pi)\times[0,1]$ is an embedded minimal annulus.
\end{Prop}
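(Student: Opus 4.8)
The plan is to verify (i) and (ii) for the genuine helicoid $\ell>0$ by direct means, treating $\ell=0$ as a degenerate limit, and then to get (iii) by moving an arbitrary linked pair into the normal form of (i). For (i), I would first recognise $x\mapsto f(x,y)$ as a horizontal geodesic by factoring it through a left translation: with $b:=\pm\tfrac{\kappa}{4\tau}\ell$,
\[
f(x,y)=\Lcal_{(e^{iby},\,0)}\Bigl(\cos\bigl(\tfrac{\sqrt{\kappa}}{2}x\bigr),\ \sin\bigl(\tfrac{\sqrt{\kappa}}{2}x\bigr)e^{i\varphi y}\Bigr),
\]
where by \eqref{hgeodesic} the right-hand factor is the unit-speed horizontal geodesic through $\bone$ with tangent $F_{\varphi y}$ and $\Lcal_{(e^{iby},0)}$ is an isometry. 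Hence $h_1=f(\cdot,0)$ and $h_2=f(\cdot,1)$ are horizontal geodesics; applying \eqref{hopffibrationleft} one sees that $\Pi(h_2)$ is $\Pi(h_1)$ rotated by the angle $\varphi\pm\tfrac{\kappa}{2\tau}\ell$ about $\Pi(\bone)$, so for the relevant $\varphi$ the two projections are distinct great circles, which is exactly the condition that $h_1$ and $h_2$ be linked. For minimality I would avoid a brute-force second fundamental form computation and use a reflection instead: the involution $R(z,w):=(\overline{z},\overline{w})$ lies in $\Iso(\mathbb{S}^3(\kappa,\tau))$ because its matrix anticommutes with $V$ (see \eqref{isometry}), it is orientation-preserving on $\mathbb{S}^3$, it fixes the ruling $h_1$ pointwise, and $R\circ f(x,y)=f(x,-y)$. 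Conjugating $R$ by the screw motions $\Phi_s(z,w):=(e^{ibs}z,\ e^{i(\varphi+b)s}w)$, which satisfy $\Phi_s\circ f(\cdot,y)=f(\cdot,y+s)$, produces for every $y_0$ an isometry $R_{y_0}$ that fixes the ruling $f(\cdot,y_0)$ pointwise and sends $f(x,y_0+t)$ to $f(x,y_0-t)$; thus $dR_{y_0}$ fixes the tangent to that ruling, reverses the transverse direction, and so, being orientation-preserving on $\mathbb{S}^3$, reverses the unit normal. The mean curvature vector is therefore both fixed (isometry preserving the surface and the point) and reversed by $dR_{y_0}$ along $f(\cdot,y_0)$, hence vanishes, and $H\equiv0$. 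A short computation with $\partial_xf,\partial_yf$ shows $f$ is an immersion on $\mathbb{R}\times[0,1]$ for $\ell\in(0,8\tau\pi/\kappa)$, and since $f(\cdot,y)$ is periodic in $x$ with period the length $\tfrac{4\pi}{\sqrt{\kappa}}$ of a horizontal geodesic, $f$ descends to an immersed minimal annulus spanning $h_1$ and $h_2$; the limit $\ell=0$, $\varphi=\pi$ gives the horizontal umbrella after removing the two values where $\partial_yf$ degenerates, with minimality by the same symmetry.

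\emph{Part (ii)} is the technical heart. To test injectivity of $f$ on its fundamental domain, assume $f(x_1,y_1)=f(x_2,y_2)$; comparing the moduli of the two coordinates forces $|\cos\tfrac{\sqrt{\kappa}}{2}x_1|=|\cos\tfrac{\sqrt{\kappa}}{2}x_2|$ and $|\sin\tfrac{\sqrt{\kappa}}{2}x_1|=|\sin\tfrac{\sqrt{\kappa}}{2}x_2|$, so $\tfrac{\sqrt{\kappa}}{2}x_1\equiv\pm\tfrac{\sqrt{\kappa}}{2}x_2$ modulo $\pi$, and feeding each of the finitely many resulting relations into the two phase equations reduces them to congruences of the shape $b(y_1-y_2)\in\{0,\pi\}+2\pi\Z$ and $(\varphi+b)(y_1-y_2)\in\{0,\pi\}+2\pi\Z$ with $|y_1-y_2|<1$. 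The geometric content is that the ruling $f(\cdot,y)$ slides along the core Hopf fibre $\{(e^{i\theta},0)\}$, meeting it in the antipodal pair $(e^{iby},0),(-e^{iby},0)$, while its height changes by $by$; for distinct rulings to stay disjoint the total slide $|b|=\tfrac{\kappa}{4\tau}\ell$ must be at most $\pi$, i.e. $\ell\le\tfrac{4\tau\pi}{\kappa}$, and a case check on the remaining possibilities (including the behaviour on the second core) confirms that this is necessary and sufficient for injectivity. The domain being compact, injectivity here means embeddedness, and the same bookkeeping settles the umbrella case $\ell=0$; carrying out the case check so that no identification slips through is where the work lies.

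For \emph{part (iii)}, I would bring the given pair into the normal form of (i). Since $\Iso(\mathbb{S}^3(\kappa,\tau))$ acts transitively enough on fibres and horizontal geodesics, first apply an ambient isometry carrying a Hopf fibre $\Gamma$ that meets both $h_1$ and $h_2$ perpendicularly (in the identical case any fibre met by $h_1$; in the linked case the fibre over an intersection point of $\Pi(h_1)$ and $\Pi(h_2)$) to $\{(e^{i\theta},0)\}$, then rotate so that $h_1$ passes through $\bone$ with tangent $E_1$; now $h_1=f(\cdot,0)$. The geodesic $h_2$ meets $\Gamma$ at a point $(e^{i\delta},0)$ with some horizontal tangent $F_\psi$ there, and writing out the horizontal geodesic through $(e^{i\delta},0)$ with tangent $F_\psi$ via \eqref{hgeodesic} shows it is $f(\cdot,1)$ for the parametrisation \eqref{helicoidpar} with $\tfrac{\kappa}{4\tau}\ell=\delta$ and $\varphi=\psi$. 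Because the two points of $h_2\cap\Gamma$ are antipodal, $\delta$ is determined only modulo $\pi$; the representative in $(0,\pi]$ puts $\ell$ in the embedded range $(0,4\tau\pi/\kappa]$ of (ii), proving existence, and since $\delta\in(0,\pi]$ and the tangent angle are then pinned down by $h_1,h_2$ with their orientations, the embedded spherical helicoid is unique. Reversing the orientation of $h_1$ or of $h_2$ changes this data to that of the helicoid built with the opposite sign in \eqref{helicoidpar}, giving a second embedded minimal annulus with the same boundary circles. I expect the main obstacle to be part (ii) — organising the injectivity analysis so that the threshold $\ell=4\tau\pi/\kappa$ comes out sharply — together with the closely related point in (iii) that every linked pair really does sit in this normal form with $\ell$ in that range.
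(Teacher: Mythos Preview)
Your proposal is correct and its global architecture matches the paper's: normalise the pair by an ambient isometry so that $h_1$ is the $E_1$-circle through $\bone$, recognise $f(x,y)=\Psi_y(h_1(x))$ for a one-parameter family of isometries $\Psi_y$, and read off the embedded range from how the rulings sit along the core Hopf fibre. Part (iii) in particular is essentially identical to the paper's argument (pick the intersection point of $\Pi(h_1)$ and $\Pi(h_2)$, lift, choose the representative $\ell\in(0,4\tau\pi/\kappa]$, and note that reversing an orientation replaces $\varphi$ by $\varphi\pm\pi$).

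There are two genuine methodological differences worth recording. For minimality in (i) the paper simply notes $f(x,y)=\Psi_y(h_1(x))$ with $\Psi_y\in\Iso(\mathbb{S}^3(\kappa,\tau))$ and leaves the vanishing of $H$ to direct computation; your reflection argument via $R(z,w)=(\overline z,\overline w)$ and its conjugates $R_{y_0}=\Phi_{y_0}R\Phi_{-y_0}$ is a clean conceptual alternative that avoids computing the second fundamental form. For embeddedness in (ii) the paper argues geometrically: if $\ell\ge 4\tau\pi/\kappa$ the vertical arc $y\mapsto f(0,y)$ reaches the antipode of $h_1(0)$ on $h_1$, giving a self-intersection; if $\ell$ is below this threshold the rulings $x\mapsto f(x,y)$ carry pairwise distinct horizontal fields $F_{\varphi y}$, and the paper asserts this forces disjointness. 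Your modulus-and-phase case analysis is heavier but also more careful, since ``distinct horizontal fields'' does not by itself prevent two great circles from meeting; what actually does the work is the combination of distinct fields \emph{and} distinct footpoints on the core fibre, which your bookkeeping makes explicit. Either route reaches the same threshold.
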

\begin{proof}[Sketch of proof](i): The curves $\Pi\circ h_1$ and $\Pi\circ h_2$ are geodesic in $\mathbb{S}^2(\kappa)$ and thus they intersect in at least two points. Therefore $h_1$ and $h_2$ are joined by a segment of a vertical geodesic with length $\ell\in[0,8\tau\pi/\kappa)$. After a left translation we may assume that $h_1$ is a horizontal geodesic through $p=(1,0)$ and that the vertical segment $v$ emanates from $h_1(0)=p=(1,0)$ as well. After rotation about the vertical geodesic $v$, which are isometries in $E(\kappa,\tau)$-spaces, we can assume $h_1$ has $E_1$ as horizontal field. Thus $h_2$ is a horizontal geodesic $h_2$ through $v(\pm \ell)$ with a horizontal field $F_\varphi=\cos(\varphi)E_1+\sin(\varphi)E_2$ for some $\varphi\in[0,2\pi)$. From these information one can compute $h_1$, $v$ and $h_2$ explicitly by using \eqref{hgeodesic} and \eqref{vgeodesic}. Finally one uses \eqref{isometry} to check that, for $y\in\R$, the mapping
 \[
  \Psi_y\colon\mathbb{S}^3(\kappa,\tau)\to\mathbb{S}^3(\kappa,\tau)\mbox{,}\qquad\Psi_y(z,w):=\begin{pmatrix}z\exp\left(\pm i \frac{\kappa}{4\tau}\ell y\right)\\
 w\exp\Big(i\left(\varphi \pm\frac{\kappa}{4\tau}\ell\right)y\Big)\end{pmatrix}
 \]
is an isometry and satisfies $f(x,y)=\Psi_y(h_1(x))$. By computation one verifies that $f$ is an immersion on the domains stated in (i).

(ii): We have $f(x,y)=\Psi_y(h_1(x))$ and $\Psi_y(h_1(0))=f(0,y)=:v(y)$ by construction of $f$ in (i). Assume $\ell\geq 4\tau\pi/\kappa$. Then there is $y_0\in[0,1]$ such that $v\vert_{[0,y_0]}$ has length $4\tau\pi/\kappa$. The length of a vertical geodesic is $8\tau\pi/\kappa$, so that starting from $h_1(0)$ the vertical geodesic $v$ meets $h_1$ again at $v(y_0)=h_1(2\pi/\sqrt{\kappa})$. Thus $f$ cannot be embedded for $\ell\geq 4\tau\pi/\kappa$.

For embeddedness of $f$ on $[0,4\pi/\sqrt{\kappa})\times[0,1]$ in the case $\ell\in(0,4\tau\pi/\sqrt{\kappa})$ we note $F_y=\frac{\partial f}{\partial x}(x,y)=\cos(\varphi y)E_1+\sin(\varphi y)E_2$ for $y\in[0,1]$. This shows that $f$ is foliated by horizontal geodesics with different horizontal fields. Thus $f$ cannot have self intersections. For $\ell=0$ one argues similarly.

(iii): If we have $\ell\in[4\tau\pi/\kappa,8\tau\pi/\kappa)$ we replace it by $\tilde{\ell}:=8\tau\pi/\kappa-\ell\in[0,4\tau\pi/\kappa)$ and change the orientation of $v$. This determines $\ell$ uniquely. 

Finally we consider the claim about the change of orientation for $h_1$ and $h_2$. The surface $(x,y)\mapsto f(-x,y)$ is bounded by $x\mapsto h_1(-x)$ and $x\mapsto h_2(-x)$. Therefore it is sufficient to only change the orientation of $h_2$. This is done by replacing $\varphi$ with $\varphi\pm \pi\in[0,2\pi)$. %Because of
% In \eqref{helicoidpar}, the horizontal geodesics $h_1$ and $h_2$ are joined by the vertical segment $v(y)=f(0,y)$ for $y\in[0,1]$. Therefore $f$ is embedded if and only if $\ell\in(0,4\tau\pi/\kappa)$. One argues similarly for $\ell=0$.
\end{proof}

Let us discuss uniqueness of the Plateau problem for $h_1$ and $h_2$ as well as further non-embedded solutions:
\begin{Rem}\label{immersedunique}%\begin{enumerate}[1.]\item\label{immersedunique} In \autoref{bergerhelicoid} we chose $\ell\in[0,8\tau\pi/\kappa)$ and we joined $h_1$ and $h_2$ by a vertical segment of length $\ell$. Since vertical geodesics are closed, we can join $h_1$ and $h_2$ with the same orientations by a vertical segment of length $\tilde{\ell}:=8\tau\pi/\kappa-\ell$. Replacing $\ell$ by $\tilde{\ell}$ and $y$ by $-y$ in \eqref{helicoidpar}, we obtain another embedded minimal annulus bounded by $h_1$ and $h_2$. However, these minimal annuli are isometric via $\Psi\in\Iso(\mathbb{S}^3(\kappa,\tau))$ with $\Psi(z,w)=(\overline{w},\overline{z})$. So, up to an ambient isometry there is a unique embedded spherical helicoid bounded by $h_1$ and $h_2$.
	We have already pointed out that joining $h_1$ and $h_2$ with a vertical axis of length $\ell\geq4\tau\pi/\kappa$ yields a non-embedded minimal annulus. In fact, instead of joining $h_1$ and $h_2$ with the vertical axis of length $\ell\in[0,4\tau\pi/\kappa)$, we can traverse the great circle containing this axis several times and then join $h_1$ and $h_2$ with the axis of length $\ell$. Thus each $\ell_n=\ell+n8\tau\pi/\kappa$ with $n\in\Z\setminus\{0\}$ and $\ell\in[0,4\tau\pi/\kappa)$ defines an immersed minimal annulus bounded by the same horizontal geodesics $h_1$ and $h_2$. Therefore we do not have uniqueness or finite number of solutions among immersed minimal annuli bounded by $h_1$ and $h_2$. These solutions do not occur as solutions of the Plateau problem for $h_1$ and $h_2$ since they are not area-minimising.	
\end{Rem} 

\subsection{Daniel and Lawson correspondence}\label{chapter32}We introduce the \emph{Daniel correspondence} by Daniel from \cite{Daniel} and its properties \`{a} la Manzano and Torralbo in \cite{ManzanoTorralbo} and \cite{Torralbo2}. We refer to \cite[Section 2 and Section 3]{KarstenProceeding} for the \emph{Lawson correspondence}. Most results we state in this section are quotations from these papers.

Fist we introduce some notation:
\begin{Def}Let $\Sigma$ be an oriented surface immersed into some Riemannian manifold $E$. Then the Riemannian metric $\langle\cdot,\cdot\rangle$ of $E$ induces a rotation of angle $\frac{\pi}{2}$ on the tangent bundle to $\Sigma$. We denote this rotation by $J$. The Levi-Civita connection $\nabla$ of $E$ defines the \emph{shape operator}\index{shape operator} $S$ by $SX:=-\nabla_X N$ where $X$ is tangent to $\Sigma$ and $N$ is a unit normal vector field on $\Sigma$.\end{Def}

The $E(\kappa,\tau)$-spaces are simply connected homogeneous three-manifolds diffeomorphic to $\R^3$, $\mathbb{S}^3$ or $\mathbb{S}^2\times\R$ and arise as Riemannian fibrations $E\to M$ with geodesic fibres, where $M=M(\kappa)$ has curvature $\kappa\in\R$ and the bundle curvature is $\tau\in\R$. We denote the vertical field tangent to the fibres by $\xi$. Constructing \mcH-surfaces with $H\neq0$ and boundary in vertical or horizontal planes of a product manifold $E(\kappa,0)=M(\kappa)\times\R$ is difficult, because it is a free boundary problem. The Daniel correspondence\index{Daniel correspondence} reduces this free boundary problem to a fixed boundary problem, but one has to deal with more complicated ambient spaces:

\begin{Prop}[special case of {{\cite[Theorem 5.2]{Daniel}}}]\label{danielcorr}Let $\kappa$ and $H$ be in $\R$. For each simply connected
	\[\text{minimal surface }\Sigma\subset E(\kappa+4H^2,H)\text{ with shape operator }S\]
	there exists an isometric
	\begin{equation}
	\text{\mc} H\text{-surface }\widetilde{\Sigma}\subset E(\kappa,0)=M(\kappa)\times\R \text{ with shape operator }\widetilde{S}=JS+H\id
	\label{eq:daniel}
	\end{equation}
	and vice versa.
\end{Prop}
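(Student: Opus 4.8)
The statement to prove is \autoref{danielcorr}, the Daniel correspondence between minimal surfaces in $E(\kappa+4H^2,H)$ and \mcH-surfaces in $E(\kappa,0)=M(\kappa)\times\R$. Since this is cited as a special case of Daniel's theorem, my plan is to derive it from the general Daniel correspondence by keeping careful track of how the structural data of $E(\kappa,\tau)$-spaces transforms.

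\begin{proof}[Proof plan]
The plan is to invoke the general Daniel correspondence \cite[Theorem 5.2]{Daniel}, which asserts that given an $E(\kappa,\tau)$-space and any real number $H$, a simply connected immersed surface $\Sigma$ with constant mean curvature $H$, unit normal $N$, shape operator $S$, and angle function $\nu=\langle N,\xi\rangle$ corresponds isometrically to a surface $\widetilde\Sigma$ of constant mean curvature $\widetilde H$ in $E(\widetilde\kappa,\widetilde\tau)$, where the new data are given by the \emph{sister relations}
\[
\widetilde H=\sqrt{H^2+\tau^2-\widetilde\tau^2},\qquad \widetilde\kappa=\kappa+4(\tau^2-\widetilde\tau^2),\qquad \widetilde S=JS+(H\id-\widetilde H\,J^{-1}\cdot)\ \text{(the appropriate Daniel shape-operator formula)},
\]
together with matching angle functions and a rotation of the horizontal parts of the tangent vectors by a fixed angle. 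The first step is to specialise: take the source space to be $E(\kappa+4H^2,H)$, so its bundle curvature is $\tau=H$, and take $H=0$ on the source side, i.e. $\Sigma$ minimal. The target bundle curvature I want is $\widetilde\tau=0$, which is consistent because then $\widetilde\kappa=(\kappa+4H^2)+4(H^2-0)$ is \emph{not} what I want---so I must instead read the correspondence in the direction where the \emph{target} has the larger curvature. Concretely, I will apply Daniel's theorem with source data $(\kappa_{\mathrm{s}},\tau_{\mathrm{s}})=(\kappa,0)$ and target data $(\kappa_{\mathrm t},\tau_{\mathrm t})=(\kappa+4H^2,H)$: then $\widetilde\kappa=\kappa+4(0-H^2)$ would again be wrong in sign, which tells me the bundle-curvature bookkeeping must be $\widetilde\kappa=\kappa_{\mathrm s}+4H_{\mathrm s}^2$ when passing to the minimal sister. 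Thus the correct reading is: a minimal $\Sigma$ in $E(\kappa+4H^2,H)$ has sister an $H$-surface in $E(\kappa+4H^2-4H^2,0)=E(\kappa,0)=M(\kappa)\times\R$, with $\widetilde H=\sqrt{0+H^2-0^2}=H$. That is exactly the stated curvatures, so the first two relations check out once one is careful about which space is ``source''.

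The second step is to pin down the shape operator. Daniel's general formula for the sister shape operator, when the source is minimal, reads $\widetilde S=JS+H\id$ up to the sign/normal conventions, because the general expression $\widetilde S = JS + \text{(mean-curvature term)}$ collapses: the minimal source contributes no $H_{\mathrm s}\id$ term, and the target mean curvature $\widetilde H=H$ enters as $+H\id$ after using that the rotation angle between the correspondences is $\pi/2$ in this degenerate ($\widetilde\tau=0$) case, which is precisely why the operator $J$ (rotation by $\pi/2$ on $T\Sigma$, as introduced in the Definition preceding the proposition) appears. I would verify this by checking the two standard consistency conditions: (a) the trace gives $\operatorname{tr}\widetilde S=\operatorname{tr}(JS)+2H=0+2H$, so $\widetilde\Sigma$ indeed has mean curvature $H$ (here $\operatorname{tr}(JS)=0$ because $S$ is symmetric and $J$ is skew, a one-line linear-algebra fact); and (b) the Gauss and Codazzi equations for $\widetilde S$ in $E(\kappa,0)$ are equivalent to those for $S$ in $E(\kappa+4H^2,H)$---this is the content that makes the immersion well-defined and is exactly what Daniel proves. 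Finally, the ``vice versa'' direction is automatic from the involutive nature of the construction: applying the same recipe to $\widetilde\Sigma$ (an $H$-surface in $E(\kappa,0)$) with the roles reversed returns a minimal surface in $E(\kappa+4H^2,H)$, since $(JS+H\id)$ composed appropriately with another $J$-rotation and subtraction of $H\id$ recovers $S$; one checks $J(\widetilde S-H\id)=J(JS)=-S$, and the sign is absorbed by the orientation/normal convention, so the two constructions are mutually inverse.

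The main obstacle I anticipate is purely bookkeeping: matching Daniel's sign, orientation, and normal-field conventions (and the direction of the rotation angle, $+\pi/2$ versus $-\pi/2$) to the conventions fixed in this paper---in particular the sign in $SX=-\nabla_X N$ and the orientation defining $J$---so that the clean formula $\widetilde S=JS+H\id$ comes out with a plus sign and with $J$ rather than $J^{-1}=-J$. There is no hard analysis here; the substance was already done by Daniel, and the task is to specialise his theorem correctly and confirm that the degenerate target bundle curvature $\widetilde\tau=0$ produces exactly the product space $M(\kappa)\times\R$ with the stated shape operator. I would therefore present the proof as: (1) recall Daniel's general statement; (2) substitute $\tau=H$ on the minimal side and $\widetilde\tau=0$; (3) read off $\widetilde\kappa=\kappa$ and $\widetilde H=H$; (4) simplify the shape-operator formula to $\widetilde S=JS+H\id$ and verify $\operatorname{tr}(JS)=0$; (5) note symmetry of the construction for the converse. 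This keeps the argument short, as befits a ``special case of'' citation.
\end{proof}
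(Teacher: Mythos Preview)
The paper gives no proof of this proposition at all; it is stated purely as a citation of Daniel's theorem, so there is nothing to compare against. Your plan---specialise Daniel's general sister correspondence to the pair $(\kappa+4H^2,H)\leftrightarrow(\kappa,0)$---is exactly the right thing to do, and the checks you propose (trace of $JS$ vanishes, involutivity for the converse) are appropriate.

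There is, however, a concrete sign error in your bookkeeping that causes the confusion in your first paragraph. Daniel's invariant is $\kappa-4\tau^2$, not $\kappa+4\tau^2$; equivalently, the correct relation is
\[
\widetilde\kappa-4\widetilde\tau^{\,2}=\kappa-4\tau^2,\qquad\text{i.e.}\qquad \widetilde\kappa=\kappa+4(\widetilde\tau^{\,2}-\tau^2),
\]
the opposite sign to what you wrote. With the correct sign, taking source data $\kappa_{\mathrm s}=\kappa+4H^2$, $\tau_{\mathrm s}=H$, $H_{\mathrm s}=0$ and target bundle curvature $\widetilde\tau=0$ gives $\widetilde\kappa=(\kappa+4H^2)-4H^2=\kappa$ immediately, and $\widetilde H^{\,2}+\widetilde\tau^{\,2}=H_{\mathrm s}^2+\tau_{\mathrm s}^2=H^2$ yields $\widetilde H=H$. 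There is then no need for the back-and-forth about which side is ``source''; the muddle you experienced was entirely an artefact of the sign slip, not of any genuine asymmetry in the correspondence. Once that is fixed, your steps (1)--(5) go through cleanly and the rotation angle $\theta=\pm\pi/2$ (from $0+iH=e^{i\theta}(H+i0)$) indeed produces the operator $J$ in $\widetilde S=JS+H\id$.
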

We will refer to the surfaces $\Sigma$ and $\widetilde{\Sigma}$ as \emph{conjugate sister surfaces}\index{conjugate sister surface}, or simply as \emph{sisters}\index{sister surface}.

Let us take a look at some special choices for $\kappa$ and $H$:

\begin{Ex}\label{exdaniel} \begin{enumerate}[(a)]
		\item For $\kappa=0$ and $H=1$ we obtain Lawson's correspondence of minimal surfaces in $\mathbb{S}^3$ and \mcone-surfaces in $\R^3$.
		
		\item \mcH-surfaces $\widetilde{\Sigma}$ in the space $E(-1,0)=\H^2\times\R$ with $H>\frac{1}{2}$ have minimal sisters $\Sigma$ in Berger spheres $E(4H^2-1,H)$ since the base curvature $4H^2-1$ is positive.
	\end{enumerate}
\end{Ex}
Daniel's correspondence has the following \emph{first order description}\index{first order description}:
% \begin{Prop}[{{\cite[Proposition 2.2]{KarstenRob}}}]\label{firstorderprop}We assume that an \mcH-immersion $\tilde{f}\colon\Omega\to E(\kappa,0)$ and a minimal immersion ${f\colon\Omega\to {E(4H^2+\kappa,H)}}$ parametrise sister surfaces with unit normal fields $\widetilde{N}$ and $N$, respectively. Let $\widetilde{Z}$ and $Z$ be the corresponding restrictions of the vertical vector fields $\widetilde{\xi}$ and $\xi$ along $\tilde{f}$ and $f$, respectively. Denote the tangential parts of these vector fields by $\widetilde{T}$ and $T$, respectively. Then
% \begin{equation}
%  \big\langle \widetilde{Z},\widetilde{N}\big\rangle=\langle Z,N\rangle\quad\mbox{ and }\quad df^{-1}(T)=Jd\tilde{f}^{-1}(\widetilde{T}).\label{firstorderekt}
% \end{equation}
% \end{Prop}
\begin{Prop}[{{\cite[Theorem 5.2]{Daniel}}}]\label{firstorderprop}Suppose an \mcH-immersion $\tilde{f}\colon\Omega\to E(\kappa,0)$ and minimal immersion ${f\colon\Omega\to {E(4H^2+\kappa,H)}}$ parametrise sister surfaces with unit normal fields $\widetilde{N}$ and $N$, respectively. Let $\widetilde{Z}$ and $Z$ be the corresponding restrictions of the vertical vector fields $\widetilde{\xi}$ and $\xi$ along $\tilde{f}$ and $f$, respectively. Denote the projections to the respective tangents spaces of these vector fields by $\widetilde{T}$ and $T$. Then
	\begin{equation}
	\big\langle \widetilde{Z},\widetilde{N}\big\rangle=\langle Z,N\rangle\quad\mbox{ and }\quad df^{-1}(T)=Jd\tilde{f}^{-1}\big(\widetilde{T}\big).\label{firstorderekt}
	\end{equation}
\end{Prop}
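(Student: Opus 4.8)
The plan is to trace both identities back to the construction of the sister surface at the level of the local invariants of a surface in an $E(\kappa,\tau)$-space. Recall that an oriented immersion into $E(\kappa,\tau)$ is determined, up to ambient isometry, by the quadruple $(I,S,T,\nu)$ --- induced metric, shape operator, tangential part $T$ of the vertical field $\xi=T+\nu N$, and angle function $\nu=\langle\xi,N\rangle$ --- subject to $|T|^2+\nu^2=1$, the Gauss and Codazzi equations, and the two further structure equations
\[
\nabla_XT=\nu(SX-\tau JX),\qquad X(\nu)=-\langle SX-\tau JX,\,T\rangle .
\]
Daniel's correspondence is set up precisely by transporting these data: starting from a minimal surface $\Sigma\subset E(4H^2+\kappa,H)$ with quadruple $(I,S,T,\nu)$ (so $\tau=H$ and $\operatorname{tr}S=0$), one \emph{declares} the sister $\widetilde\Sigma\subset E(\kappa,0)$ to be the surface with quadruple $(\widetilde I,\widetilde S,\widetilde T,\widetilde\nu)=(I,\;JS+H\id,\;JT,\;\nu)$; \autoref{danielcorr} records only the shape-operator part of this, $\widetilde S=JS+H\id$, but the full prescription is what we need here. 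Granting it, \eqref{firstorderekt} is immediate: the first identity is $\langle\widetilde Z,\widetilde N\rangle=\widetilde\nu=\nu=\langle Z,N\rangle$, and the second says that, read in the common parameter domain, $\widetilde T$ is the $J$-rotate of $T$, i.e. $df^{-1}(T)=J\,d\tilde f^{-1}(\widetilde T)$ once $J$ is transported to $\Omega$ via the (orientation-compatible) parametrisations, the sign being fixed by the orientation conventions entering \eqref{eq:daniel}.

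So the one thing with actual content is to check that the prescribed quadruple really is the quadruple of a surface in $E(\kappa,0)$, i.e. that $(I,JS+H\id,JT,\nu)$ satisfies the compatibility system for $E(\kappa,0)$ whenever $(I,S,T,\nu)$ satisfies it for $E(4H^2+\kappa,H)$ with $\operatorname{tr}S=0$; the fundamental theorem of surfaces then supplies and pins down $\widetilde\Sigma$. I would verify this using only that $J$ is parallel, skew-adjoint and satisfies $J^2=-\id$, and that $\operatorname{tr}S=0$ makes $JS$ self-adjoint with $\det(JS+H\id)=\det S+H^2$ and $\operatorname{tr}(JS+H\id)=2H$: then $\nabla_X(JT)=J\nabla_XT=\nu J(SX-HJX)=\nu(JS+H\id)X$ gives the first structure equation for $\widetilde\tau=0$, $X(\nu)=-\langle SX-HJX,T\rangle=-\langle(JS+H\id)X,JT\rangle$ gives the second, $|JT|^2+\nu^2=|T|^2+\nu^2=1$ is clear, the Gauss equation is preserved because $4H^2+\kappa-4H^2=\kappa-0$, and the Codazzi equation for $JS+H\id$ reduces --- via $\nabla J=0$ and the same curvature identity --- to Codazzi for $S$.

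I expect the only real difficulty to be bookkeeping of signs and orientations: the direction of the rotation $J$, the orientations of $N$ and $\widetilde N$, and the corresponding sign in $\widetilde T=\pm JT$ (equivalently, which of $\theta=\pm\tfrac\pi2$ one uses in Daniel's family of associate immersions). Once these are pinned down so that \eqref{eq:daniel} reads $\widetilde S=JS+H\id$, the statement is forced, and it is really a repackaging of the data-level definition of the Daniel correspondence; it can also simply be quoted from \cite[Theorem 5.2]{Daniel}, where \eqref{firstorderekt} is part of the conclusion.
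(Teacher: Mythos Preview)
The paper does not prove this proposition at all: it is stated as a direct quotation of \cite[Theorem~5.2]{Daniel}, in a section whose results are explicitly announced as citations. So there is no ``paper's own proof'' to compare against; your proposal in fact supplies strictly more than the paper does.

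What you wrote is the standard argument and is correct in outline: Daniel's correspondence is \emph{defined} at the level of the fundamental quadruple $(I,S,T,\nu)$ by a rotation of the associate family, and one checks that the rotated data satisfy the compatibility equations for the target $E(\kappa,0)$; the two identities in \eqref{firstorderekt} are then just the $\nu$- and $T$-components of that prescription. Your verification of the structure equations using $\nabla J=0$, $J^2=-\id$, $\operatorname{tr}S=0$ is the right computation. One small caution on the sign you already flagged: with the convention $\widetilde T=JT$ (which is what makes your check of $\nabla_X\widetilde T=\widetilde\nu\,\widetilde S X$ go through with $\widetilde\nu=\nu$ and $\widetilde S=JS+H\id$), the identity $df^{-1}(T)=J\,d\tilde f^{-1}(\widetilde T)$ would read $t=J(Jt)=-t$ if taken literally; the resolution is exactly the orientation/angle bookkeeping you mention (which of $\theta=\pm\tfrac\pi2$ one uses, and correspondingly whether $J$ or $J^{-1}$ appears). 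In Daniel's paper the signs are fixed once and for all, so quoting the result, as the present paper does, is the cleanest option.
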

In an $E(\kappa,\tau)$-space, a rotation of angle $\pi$ about a horizontal or vertical geodesic is an isometry. In the product spaces $E(\kappa,0)=M(\kappa)\times\R$, reflections in vertical and horizontal planes are isometries. We refer to these planes as \emph{mirror planes}\index{mirror plane}. A curve $\tilde{c}$ on a surface $\tilde{\Sigma}$ in $E(\kappa,0)$ is called \emph{mirror curve}\index{mirror curve} if it is contained in a mirror plane and its conormal $\tilde{\eta}$ is perpendicular to the mirror plane.

The correspondence relates mirror curves as follows:

\begin{Prop}[{{\cite[Proposition 3]{Torralbo2}}}]\label{symmetryhom}Let $\widetilde{\Sigma}$ be an \mcH-surface in $M(\kappa)\times\R$ with sister minimal surface $\Sigma$ in $E(4H^2+\kappa,H)$.
	\begin{enumerate}[(i)]
		\item A curve $\tilde{c}$ on $\widetilde{\Sigma}\subset M(\kappa)\times\R$ is a vertical mirror curve if and only if its sister curve $c$ on $\Sigma\subset E(\kappa+4H^2,H)$ is a horizontal (ambient) geodesic.\index{horizontal geodesic}
		
		\item Similarly, $\tilde{c}$ is contained in a horizontal mirror curve if and only if $c$ on the minimal sister $\Sigma\subset E(4 H^2+\kappa,H)$ is a vertical geodesic (contained in a fibre).\index{vertical geodesic}
	\end{enumerate}\end{Prop}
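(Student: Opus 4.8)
The plan is to prove the two equivalences by comparing the defining properties of a mirror curve on $\widetilde\Sigma$ with the induced data on the sister $\Sigma$, using only two inputs: the shape‑operator relation $\widetilde S=JS+H\,\id$ from \eqref{eq:daniel}, which interchanges ``geodesic of the surface'' with ``asymptotic line'' (see below), and the first order relations \eqref{firstorderekt} of \autoref{firstorderprop}, which control the vertical field. No reflection principle is needed, since the mirror planes $\Gamma\times\R$ and $M(\kappa)\times\{h\}$ are totally geodesic in $M(\kappa)\times\R$. The first thing I would record is the local behaviour of a mirror curve $\tilde c\subset\widetilde\Sigma$ in a mirror plane $P$ with $\tilde\eta\perp P$. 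Since $T\widetilde\Sigma=\operatorname{span}\{\tilde c',\tilde\eta\}$ and $\tilde\eta\perp P$, the unit normal $\widetilde N$ lies in $TP$; because $P$ is totally geodesic, $\nabla_{\tilde c'}\tilde c'$ and $\nabla_{\tilde c'}\widetilde N$ stay in $TP$, and projecting to $T\widetilde\Sigma$ while using $T\widetilde\Sigma\cap TP=\R\,\tilde c'$ shows at once that $\tilde c$ is simultaneously a geodesic of $\widetilde\Sigma$ and a line of curvature of $\widetilde\Sigma$. For the vertical field: if $P$ is vertical then $\widetilde\xi\in TP$, so $\widetilde\xi\perp\tilde\eta$ and $\widetilde T:=\widetilde\xi^{\top}$ is parallel to $\tilde c'$; if $P$ is horizontal then $\tilde\eta=\pm\widetilde\xi$, so $\langle\widetilde Z,\widetilde N\rangle=0$ and $\widetilde T=\widetilde\xi=\pm\tilde\eta$ is orthogonal to $\tilde c'$.

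Next I would transfer these to $\Sigma$. As $f$ and $\tilde f$ parametrise isometric surfaces over a common domain, ``$\tilde c$ is a geodesic of $\widetilde\Sigma$'' and ``$c$ is a geodesic of $\Sigma$'' are literally the same statement; and reading $\widetilde S=JS+H\,\id$ as endomorphisms of the common tangent bundle, the chain $\widetilde S\tilde c'\parallel\tilde c'\iff (JS)\tilde c'\parallel\tilde c'\iff \langle S c',c'\rangle=0$ shows that $\tilde c$ is a line of curvature of $\widetilde\Sigma$ exactly when $c$ is an asymptotic line of $\Sigma$. A curve on $\Sigma$ that is both a geodesic of $\Sigma$ and an asymptotic line has vanishing ambient acceleration, so $c$ is an ambient geodesic of $E(\kappa+4H^2,H)$. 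To decide its type I use \eqref{firstorderekt}, i.e. $df^{-1}(T)=J\,d\tilde f^{-1}(\widetilde T)$. In the vertical case $\widetilde T\parallel\tilde c'$ gives $T\perp c'$, hence $\langle\xi,c'\rangle=0$ along $c$; an ambient geodesic with everywhere horizontal tangent is, by the explicit formula \eqref{hgeodesic}, the horizontal geodesic through $c(0)$ with tangent $c'(0)$, which proves one implication of (i). In the horizontal case $\widetilde T\perp\tilde c'$ gives $T\parallel c'$, while $\langle Z,N\rangle=\langle\widetilde Z,\widetilde N\rangle=0$ forces $\xi$ to be tangent to $\Sigma$ along $c$, so $\xi=T=\pm c'$ and $c$ is an integral curve of $\xi$, i.e. a fibre; this is one implication of (ii).

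For the converses I would run the implications backwards, the one new point being the reconstruction of the mirror plane. If $c$ is a horizontal geodesic on the minimal $\Sigma$, then $c$ is an ambient geodesic, so $\tilde c$ is a geodesic of $\widetilde\Sigma$ and, again by $\widetilde S=JS+H\,\id$, a line of curvature; moreover $c'$ horizontal gives $T\perp c'$ and hence $\widetilde T\parallel\tilde c'$ by \eqref{firstorderekt}. Writing $\tilde c=(\sigma,t)$ in $M(\kappa)\times\R$, the relation $\widetilde T\parallel\tilde c'$ forces the $M(\kappa)$‑component of $\widetilde N$ to be parallel to $\sigma'$, while ``$\tilde c$ geodesic of $\widetilde\Sigma$'' forces the ambient acceleration of $\tilde c$ to be parallel to $\widetilde N$; comparing $M(\kappa)$‑components yields $\nabla^{M(\kappa)}_{\sigma'}\sigma'\parallel\sigma'$, so $\sigma$ is a geodesic of $M(\kappa)$ and $\tilde c$ lies in the vertical plane $\Gamma\times\R$ spanned by $\sigma$ and $\widetilde\xi$; since $\tilde\eta\perp\widetilde\xi$ and $\tilde\eta\perp\tilde c'$, the conormal $\tilde\eta$ is perpendicular to that plane, so $\tilde c$ is a vertical mirror curve. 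The horizontal case is symmetric: a fibre $c$ is an ambient geodesic and an asymptotic line (as $\operatorname{II}(\xi,\xi)=\langle\nabla_\xi\xi,N\rangle=0$) with $\xi$ tangent to $\Sigma$, whence $\tilde c$ is a geodesic and line of curvature of $\widetilde\Sigma$ with $\langle\widetilde Z,\widetilde N\rangle=0$ and $\widetilde T\perp\tilde c'$; this forces $\tilde c$ to be horizontal with $t\equiv\text{const}$ and $\tilde\eta=\pm\widetilde\xi$ normal to the slice $M(\kappa)\times\{t\}$, i.e. a horizontal mirror curve.

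The main obstacle I anticipate is not conceptual but a matter of care in the bookkeeping: keeping the identification of the common tangent bundle consistent between $f$ and $\tilde f$, correctly tracking the signs in $\widetilde S=JS+H\,\id$, in $J^2=-\id$, and in the various orthogonality relations, and disposing of the degenerate sub‑cases (where $\tilde c$ is a vertical segment and $\sigma$ collapses to a point, or where the analogous degeneracy occurs on $\Sigma$) when reconstructing the mirror plane. Each of these is routine, but all must be handled to make the two‑way equivalences airtight.
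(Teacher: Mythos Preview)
The paper does not give its own proof of this proposition: it is quoted verbatim from \cite[Proposition~3]{Torralbo2}, so there is nothing in the paper to compare your argument against line by line. That said, your approach is exactly the standard one used to establish this kind of correspondence result (and is, in outline, the argument in the cited source): characterise a mirror curve intrinsically as a surface geodesic that is also a line of curvature, use $\widetilde S=JS+H\,\id$ to turn ``line of curvature'' on $\widetilde\Sigma$ into ``asymptotic line'' on $\Sigma$, note that surface geodesic plus asymptotic line equals ambient geodesic, and then use the first order relations \eqref{firstorderekt} to decide horizontal versus vertical. Your handling of the converses, in particular the reconstruction of the vertical mirror plane by showing the $M(\kappa)$--projection of $\tilde c$ is a (pre)geodesic, is correct.

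Two small remarks. First, your appeal to \eqref{hgeodesic} is literally only for Berger spheres, whereas the proposition is stated for all $E(4H^2+\kappa,H)$; the fix is trivial (uniqueness of geodesics with given initial data, together with the fact that in every $E(\kappa,\tau)$ the integral curves of a fixed horizontal field $F$ are geodesics because $\nabla_F F=0$), but you should phrase it that way. Second, in the converse to (i) your argument that $\nabla^{M(\kappa)}_{\sigma'}\sigma'\parallel\sigma'$ tacitly assumes $\sigma'\neq 0$; at points where $\tilde c'=\pm\widetilde\xi$ you need the continuity argument you allude to (the horizontal conormal $\tilde\eta$ is nowhere zero and picks out a well-defined geodesic direction in $M(\kappa)$ even where $\sigma'$ vanishes). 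These are precisely the ``bookkeeping'' issues you already flagged, and none of them is a genuine gap.
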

	
	Another issue is the smooth extension of surfaces bounded by geodesics. If a minimal surface $\Sigma$ in some $E(\kappa,\tau)$-space has a vertical or horizontal geodesic $c$ contained in $\partial\Sigma$ then it is possible to extend $\Sigma$ by geodesic reflection $\rho$ around $c$. This is better known as \emph{Schwarz reflection}\index{Schwarz reflection} and the extension is smooth; for details we refer to \cite[Section 2.2]{ManzanoTorralbo}. Let $f\colon\Omega\to E(4H^2+\kappa,H)$ parametrise $\Sigma\cup\rho(\Sigma)$. Then $c$ is contained in $f(\Omega)$, and the Daniel correspondence also relates such extensions:
	
	\begin{Prop}[{{\cite[Lemma 2]{ManzanoTorralbo}}}]\label{extensionhom}Let an \mcH-immersion $\tilde{f}\colon\Omega\to E(\kappa,0)$ and a minimal immersion ${f\colon\Omega\to {E(4H^2+\kappa,H)}}$ parametrise sister surfaces. If $f(\Omega)$ is invariant by a $\left\{\begin{smallmatrix}\mbox{horizontal}\\\mbox{vertical}\end{smallmatrix}\right\}$ geodesic reflection, then $\tilde{f}(\Omega)$ is invariant by a reflection in a $\left\{\begin{smallmatrix}\mbox{vertical}\\\mbox{horizontal}\end{smallmatrix}\right\}$ plane. 
	\end{Prop}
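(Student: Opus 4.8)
The plan is to combine Daniel's first-order description (\autoref{symmetryhom}) with the Schwarz reflection principle for \mcH-surfaces in $E(\kappa,0)=M(\kappa)\times\R$. As in the discussion preceding the statement, $f(\Omega)$ is the Schwarz extension $\Sigma\cup\rho(\Sigma)$ across a horizontal (resp.\ vertical) geodesic $c$, so $c$ is an interior curve of $f(\Omega)$. By \autoref{symmetryhom} the sister curve $\tilde c\subset\widetilde\Sigma:=\tilde f(\Omega)$ of $c$ is then a vertical (resp.\ horizontal) mirror curve: it lies in a uniquely determined vertical (resp.\ horizontal) plane $P$, and along $\tilde c$ the conormal $\tilde\eta$ of $\widetilde\Sigma$ is perpendicular to $P$. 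Writing $\sigma$ for the reflection of $E(\kappa,0)$ in $P$, which is an isometry, the claim amounts to $\sigma(\widetilde\Sigma)=\widetilde\Sigma$.

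First I would record how $\sigma$ acts along $\tilde c$. It fixes $\tilde c$ pointwise, fixes the tangent vector $\tilde c'$, and sends $\tilde\eta$ to $-\tilde\eta$ because $\tilde\eta\perp P$; hence it preserves the tangent plane $\operatorname{span}(\tilde c',\tilde\eta)$ of $\widetilde\Sigma$ along $\tilde c$. Moreover the unit normal $\widetilde N$ of $\widetilde\Sigma$ is orthogonal to both $\tilde c'$ and $\tilde\eta$, so it lies in the tangent space of $P$ and is fixed by $\sigma$ as well. Therefore $\sigma(\widetilde\Sigma)$ is again an \mcH-surface with the same $H$ (for the unit normal $\sigma_*\widetilde N$), and it shares with $\widetilde\Sigma$ the curve $\tilde c$, the tangent planes of $\widetilde\Sigma$ along $\tilde c$, and the unit normal $\widetilde N$ along $\tilde c$.

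Next I would invoke uniqueness. Near any point of $\tilde c$, write both $\widetilde\Sigma$ and $\sigma(\widetilde\Sigma)$ as graphs over the common tangent plane with the common normal: they are then solutions of the same real-analytic elliptic constant mean curvature equation with the same Cauchy data along $\tilde c$, namely the same $0$-jet and, by the previous paragraph, the same $1$-jet. By uniqueness of the Cauchy problem for elliptic equations with analytic coefficients --- equivalently, by the Schwarz reflection principle --- $\sigma(\widetilde\Sigma)$ and $\widetilde\Sigma$ agree in a neighbourhood of $\tilde c$, and hence everywhere by real-analyticity and connectedness. Thus $\sigma(\widetilde\Sigma)=\widetilde\Sigma$, that is, $\tilde f(\Omega)$ is invariant by a reflection in a vertical (resp.\ horizontal) plane, as asserted.

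The step I expect to be delicate is not a computation but the bookkeeping that makes the reflection argument valid: (a) deducing from ``$f(\Omega)$ is invariant by a geodesic reflection'' that $c$ indeed lies in the interior of $f(\Omega)$, so that $\tilde c$ is interior to $\widetilde\Sigma$ and the reflection principle applies on both sides of $\tilde c$; (b) verifying that $\sigma(\widetilde\Sigma)$ solves the \emph{same} constant mean curvature equation with a normal agreeing with $\widetilde N$ along $\tilde c$ --- this is precisely where the orthogonality $\tilde\eta\perp P$, the content of \autoref{symmetryhom} and thus of Daniel's first-order identities \eqref{firstorderekt}, enters; and (c) the passage from local to global agreement, which relies on real-analyticity of $E(\kappa,0)$ and of the mean curvature equation. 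The genuinely geometric input --- that a horizontal (resp.\ vertical) geodesic on the minimal side forces a vertical (resp.\ horizontal) mirror plane for the sister surface --- is already isolated in \autoref{symmetryhom}; the present statement only upgrades a single mirror curve to a global ambient symmetry.
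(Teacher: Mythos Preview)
The paper does not prove this statement; it is quoted verbatim from \cite[Lemma~2]{ManzanoTorralbo} without argument, as the author announces at the start of \autoref{chapter32}. So there is no proof in the paper to compare against.

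Your argument is correct and is the standard Schwarz-reflection proof: \autoref{symmetryhom} gives you that $\tilde c$ is a mirror curve in a plane $P$, and then you show $\sigma(\widetilde\Sigma)=\widetilde\Sigma$ by unique continuation. Two small points worth tightening. First, in (b) you should say explicitly that any isometry $\sigma$ sends an \mcH-surface with unit normal $\widetilde N$ to an \mcH-surface (same $H$) with unit normal $\sigma_*\widetilde N$; since $\sigma_*\widetilde N=\widetilde N$ along $\tilde c$, the two local graphs over the common tangent plane satisfy the \emph{same} quasilinear elliptic equation with identical Cauchy data, and Holmgren/Aronszajn gives local agreement. Second, your point (a) is already handled by the paper's framing just before the proposition: $f(\Omega)$ is by assumption the Schwarz extension $\Sigma\cup\rho(\Sigma)$, so $c\subset f(\Omega)$ is interior. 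The alternative route, closer in spirit to how such lemmas are often proved in the $E(\kappa,\tau)$ literature, is to pull the geodesic reflection back to an involution of $\Omega$, compare the Daniel data $(I,S,T,\nu)$ of $\tilde f$ and of its composition with this involution, and invoke the fundamental theorem of surfaces in $E(\kappa,\tau)$; your Schwarz-reflection argument is equivalent and arguably more transparent.
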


	% . We choose the coordinate system in $\R^3$ as pictured in \autoref{coordinates1} and think of $\ii$ as a vertical direction. A unit quaternion $u=a\ii+b\jj+c\kk$ can thus be regarded as a direction in $\R^3$. We obtain the following relation for mirror curves:
	% \begin{center}
	% \begin{figure}[ht]
	%  \def\svgwidth{0.4\linewidth}
	% \input{CoordinateSystem.pdf_tex}
	%  \caption{Coordinates in $\R^3$}
	% \label{coordinates1}
	% \end{figure}
	% \end{center}
	%  
	% 
	% \begin{Prop}[{{\cite[Corollary 3.1]{KarstenProceeding}}}] \label{symmetrys3}Let $\widetilde{\Sigma}$ be an \mcone-surface in $\R^3$ and $\Sigma$ its minimal sister in $\mathbb{S}^3$. Then $\tilde{c}$ on $\widetilde{\Sigma}$ is a curve of planar reflection, for a plane perpendicular to $u\in\mathbb{S}^2$, if and only if the sister curve $c$ on $\Sigma$ traces out a $u$-Hopf circle\index{Hopf circle} of $\mathbb{S}^3$.\end{Prop}
	In the special case of the three-sphere, i.e. $\kappa=4$ and $\tau=1$, we have more detailed information on the sister curves available:
	\begin{Rem}As highlighted earlier, the case $\kappa=0$ and $H=1$ is the Lawson correspondence of \mcone-surfaces in $\R^3$ and minimal surfaces in $\mathbb{S}^3$. Here, mirror curves in a vertical mirror plane of $\R^3$ correspond to horizontal geodesics with the same horizontal field $F$; see for example \cite[Corollary 3.1]{KarstenProceeding}. Comparing this fact with \autoref{symmetryhom}, a natural question arises: Let $\tilde{c}$ be a vertical mirror curve of an \mcH-surface $\widetilde{\Sigma}\subset \mathbb{H}^2\times\R$ with $H>\frac{1}{2}$. Is the sister curve $c$ on $\Sigma\subset\mathbb{S}^3(4H^2-1,H)$ the integral curve of a fixed horizontal field $F$?
		
		In \autoref{vundushape} (iii) we show that the two vertical mirror curves of one half of a vertical unduloid in $\mathbb{H}^2\times\R$ correspond to a $F_1$-circle and $F_2$-circle with $F_1\neq \pm F_2$. In this sense \cite[Corollary 3.1]{KarstenProceeding} does not hold for the Daniel correspondence. The reason is that the Lawson correspondence of \mcone-surfaces in $\R^3$ and minimal surfaces in $\mathbb{S}^3$ admits a first order description for any Killing field, not only for the vertical Killing field as it is the case in the Daniel correspondence.\end{Rem} %Besides, in $\mathbb{H}^2$ one cannot speak of a direction orthogonal to a geodesic.
	
	\subsection{Singly periodic surfaces in \texorpdfstring{$\mathbb{H}^2\times\R$}{H2xR}: Basic definitions and properties}\label{chapter33}%We define singly periodic surfaces in the product space $\mathbb{H}^2\times\R$ as surfaces invariant under specific (discrete) groups of isometries. We then obtain symmetries of singly periodic (Alexandrov) embedded \mcH-annuli.
	
	We first need to define certain translations in $\mathbb{H}^2\times\R$:
	\begin{Def}[Translation induced by geodesic]
		Let $\tilde{\gamma}$ be a geodesic in $\mathbb{H}^2\times\R$ with slope $\alpha$, that is, $\cos\alpha\equiv\langle \tilde{\xi}\circ\tilde{\gamma},\tilde{\gamma}^\prime\rangle$. The projection $\Pi\circ\tilde{\gamma}$, assumed to be parametrised by arc-length, is a geodesic in $\mathbb{H}^2$ and induces a one-parameter family of hyperbolic translations $\psi_s\colon \mathbb{H}^2\to \mathbb{H}^2$ fixing $\Pi\circ\tilde{\gamma}$ as a set. Then \[\Phi_s\colon\mathbb{H}^2\times\R\to\mathbb{H}^2\times\R,\quad\Phi_s(p,h):=(\psi_{s\sin(\alpha)}(p),s\cos(\alpha)+h)\]
		is an isometry such that $\Phi_s(\tilde{\gamma}(0))=\tilde{\gamma}(s)$ for all $s\in\R$. We refer to $(\Phi_s)_{s\in\R}$ as \emph{translation along $\tilde{\gamma}$}.
	\end{Def}
	In case of $\mathbb{H}^2\times\R$ different geodesics $\tilde{\gamma}$ can induce the same one-parameter family $(\Phi_s)_{s\in\R}$. For instance, translations along any vertical geodesic in $\mathbb{H}^2\times\R$.
	
	\begin{Def}[Singly periodic]A surface $\widetilde{\Sigma}$ in $\mathbb{H}^2\times\R$ is called \emph{singly periodic}\index{singly periodic} if the following is satisfied: There is a one-parameter family of isometries $(\Phi_s)_{s\in\R}$, induced by translations along a geodesic $\tilde{\gamma}$, and a real number $T>0$ such that $\widetilde{\Sigma}$ is invariant under the discrete group $\{\Phi_{nT}\colon n\in\Z\}$. We call any geodesic $\tilde{\gamma}$ inducing the translations $(\Phi_s)_{s\in\R}$ an \emph{axis}\index{axis of a singly periodic surface} of $\widetilde{\Sigma}$.% and $T$ \emph{period} of $\widetilde{\Sigma}$.
	\end{Def}
	%  We call $T$ \emph{period} of $\widetilde{\Sigma}$.
	%We call $\tilde{\gamma}$ \emph{axis} of $\widetilde{\Sigma}$ and %in the latter case the number 
	
	%If $\tilde{\gamma}$ is vertical or simply any geodesic in $\R^3$ then we call $a:=\tilde{\gamma}^\prime$ \emph{axis} of $\widetilde{\Sigma}$. If 
	%\begin{Rem}Translationally invariant surfaces are not singly periodic.\end{Rem}
	Two geodesics $\tilde{\gamma}_1$ and $\tilde{\gamma}_2$ in $\R^3$ generate $(\Phi_s)_{s\in\R}$ if and only if $\tilde{\gamma}_1^\prime=\tilde{\gamma}_2^\prime$. The same question in $\mathbb{H}^2\times\R$ has a more interesting answer, which we highlight along another important property of singly periodic surfaces:
	\begin{Prop}\label{singleprop}
		Let $\widetilde{\Sigma}$ be a singly periodic surface invariant under $(\Phi_{nT})_{n\in\Z}$.
		\begin{enumerate}[(i)]
			%\item Suppose $\tilde{\gamma}_1$ and $\tilde{\gamma}_2$ generate $(\Phi_s)_{s\in\R}$ in $\R^3$. Then $\tilde{\gamma}_1^\prime=\tilde{\gamma}_2^\prime$, that is, $\tilde{\gamma}_1$ and $\tilde{\gamma}_2$ are either both vertical geodesics or they are in parallel vertical planes.
			\item Suppose $\tilde{\gamma}_1$ and $\tilde{\gamma}_2$ both generate $(\Phi_s)_{s\in\R}$ in $\mathbb{H}^2\times\R$. Then either both of them are vertical geodesics or they lie in the same vertical plane and differ by a vertical translation.% in that plane.
			\item Let $\widetilde{\Sigma}$ be a singly periodic immersed annulus in $\mathbb{H}^2\times\R$. Then there exists a compact annulus $\widetilde{\Sigma}_0\subset\widetilde{\Sigma}$ such that $\widetilde{\Sigma}=\bigcup_{n\in\Z}\Phi_{nT}(\widetilde{\Sigma}_0)$ (the union is not necessarily disjoint). In particular $\widetilde{\Sigma}$ is a proper immersion.
		\end{enumerate}
	\end{Prop}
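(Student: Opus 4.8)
The plan is to treat the two assertions separately. For (i), I would first observe that any axis is an integral curve of the Killing field generating the flow: if $\tilde\gamma$ induces $(\Phi_s)_{s\in\R}$, then $\tilde\gamma(s)=\Phi_s(\tilde\gamma(0))$ differentiates to $\tilde\gamma'=X\circ\tilde\gamma$, where $X$ is the Killing field with flow $(\Phi_s)$. Decomposing $X=(\sin\alpha)\,Y\oplus(\cos\alpha)\,\partial_h$, with $Y$ the Killing field on $\mathbb{H}^2$ generating the hyperbolic translations and $\alpha$ the slope (read off as the $\partial_h$-component of $X$, hence the same for any two axes), the case $\sin\alpha=0$ gives $X=\partial_h$, so every vertical geodesic is an axis; this is the first alternative. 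When $\sin\alpha\neq 0$, I would project to $\mathbb{H}^2$: since projections of geodesics in a Riemannian product are geodesics, $\Pi\circ\tilde\gamma_i$ is a geodesic of $\mathbb{H}^2$ and, up to reparametrisation, an orbit of the hyperbolic flow; as the only geodesic orbit of a hyperbolic isometry of $\mathbb{H}^2$ is its axis $\Gamma$, both $\Pi\circ\tilde\gamma_1$ and $\Pi\circ\tilde\gamma_2$ trace out $\Gamma$, so $\tilde\gamma_1,\tilde\gamma_2\subset\Gamma\times\R$. On the flat plane $\Gamma\times\R$ the field $X$ restricts to a constant vector field, so $\tilde\gamma_1$ and $\tilde\gamma_2$ are parallel straight lines there; writing them as graphs over $\Gamma$ (affine functions of common slope $\cot\alpha$, or constants when $\alpha=\pi/2$) shows they differ by a vertical translation.

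For (ii) I would lift the symmetry. Choose an immersion $f\colon\Sigma\to\mathbb{H}^2\times\R$ of an annulus $\Sigma\cong\mathbb{S}^1\times\R$ with image $\widetilde{\Sigma}$; the invariance under $\Phi_T$ provides a diffeomorphism $\phi$ of $\Sigma$ with $f\circ\phi=\Phi_T\circ f$, generating a $\Z$-action. I would then check this action is free and properly discontinuous using the displacement bound $\dist\big((p,h),\Phi_{nT}(p,h)\big)\geq|nT\cos\alpha|$ (and $\geq|nT|$ when $\cos\alpha=0$): a fixed point of $\phi^n$ with $n\neq 0$ would yield a fixed point of $\Phi_{nT}$, which has none; and since $f$ maps any compact $K\subset\Sigma$ to a set of finite diameter, $\phi^n(K)\cap K=\varnothing$ once $|n|$ is large. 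Hence $Q:=\Sigma/\Z$ is a surface whose fundamental group fits in $1\to\Z\to\pi_1(Q)\to\Z\to1$; such a group is $\Z^2$ or the Klein bottle group, so $Q$ is a torus or Klein bottle, in particular compact. Compactness of $Q$ gives $R>0$ with $\mathbb{S}^1\times[-R,R]$ surjecting onto $Q$ under the covering map; then $D:=\mathbb{S}^1\times[-R,R]$ is a compact annulus with $\Sigma=\bigcup_{n\in\Z}\phi^n(D)$, and $\widetilde{\Sigma}_0:=f(D)$ satisfies $\widetilde{\Sigma}=\bigcup_{n\in\Z}\Phi_{nT}(\widetilde{\Sigma}_0)$.

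Properness of $f$ then follows: around any $q\in\mathbb{H}^2\times\R$ a bounded neighbourhood meets only finitely many of the compact sets $\Phi_{nT}(\widetilde{\Sigma}_0)$, again by the displacement bound, so preimages of compact sets under $f$ are compact. The step I expect to require the most care is the proper-discontinuity/compact-quotient argument in (ii), and in particular arranging the fundamental piece $D$ to be genuinely \emph{annular} rather than merely compact; a minor technical point, also in (ii), is the existence of the lift $\phi$, which is automatic for an embedding and can otherwise be arranged by working on the domain of the immersion. In (i) the only nonroutine input is the standard fact that a hyperbolic isometry of $\mathbb{H}^2$ preserves a unique geodesic.
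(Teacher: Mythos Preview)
Your proof of (i) is essentially the paper's: both split on whether the axis is vertical, and in the non-vertical case both use that a hyperbolic translation of $\mathbb{H}^2$ has a unique invariant geodesic to force $\tilde\gamma_1,\tilde\gamma_2$ into the same vertical plane $\Gamma\times\R$, where they are orbits of the same slope and hence differ by a vertical translation. You phrase this via the Killing field $X$ and its integral curves, the paper via the flow directly, but the content is identical.

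For (ii) the routes genuinely differ. The paper argues directly and elementarily: take the core circle $\alpha(t)=\tfrac12(\cos t,\sin t)$ in the domain annulus $\Omega$, choose $m$ large enough that $\Phi_{mT}(\tilde f\circ\alpha)$ is disjoint from $\tilde f\circ\alpha$ (compactness of the image), lift the translated curve back to a circle $\beta\subset\Omega$ disjoint from $\alpha$, and take $\widetilde\Sigma_0$ to be the image of the subannulus bounded by $\alpha$ and $\beta$. Your approach is more structural: lift $\Phi_T$ to a diffeomorphism $\phi$ of the domain, verify the resulting $\Z$-action is free and properly discontinuous via the displacement bound, identify the quotient $Q$ as a torus or Klein bottle through the extension $1\to\Z\to\pi_1(Q)\to\Z\to1$ together with the fact that non-compact surfaces have free fundamental group, and then use compactness of $Q$ to produce an annular fundamental piece $\mathbb{S}^1\times[-R,R]$. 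Both arguments share the same unaddressed technicality---that the ambient symmetry $\Phi_T$ lifts to the domain of the immersion---which you at least flag explicitly. The paper's route is shorter and avoids covering-space theory and surface classification; yours makes the proper discontinuity transparent, explains \emph{why} the quotient is compact, and would transfer unchanged to ambient spaces where one cannot lay hands on explicit curves so easily.
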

	\begin{proof}For (i) we distinguish two cases for $(\Phi_s)_{s\in\R}$. If $\tilde{\gamma}_1$ is a vertical geodesic, then $\Phi_s(x,y,z)=(x,y,z+s)$ and  $\tilde{\gamma}_2(s)=\Phi_s(\tilde{\gamma}_2(0))$ is a vertical geodesic, too. If $\tilde{\gamma}_1$ is non-vertical then $\Phi_s$ is the composition of a hyperbolic translation along a horizontal geodesic and a vertical translation. A hyperbolic isometry in $\mathbb{H}^2$ fixes exactly one geodesic and thus $(\Phi_s)_{s\in\R}$ fixes exactly one vertical plane. The geodesic $\tilde{\gamma}_2$ induces $(\Phi_s)_{s\in\R}$ as well and so it must lie in the same vertical plane. The geodesics $\tilde{\gamma}_1$ and $\tilde{\gamma}_2$ are orbits of $(\Phi_s)_{s\in\R}$ and thus they have the same slope with respect to the vertical Killing field $\widetilde{\xi}$ of $\mathbb{H}^2\times\R$. Therefore they differ by a vertical translation.
		
		For (ii) let $\Omega:=\{(x,y)\in\R^2\colon 0<x^2+y^2<1\}$ and $\tilde{f}\colon\Omega\to E(\kappa,0)$ be an immersion such that $\widetilde{\Sigma}=\tilde{f}(\Omega)$. Let $\alpha\colon[0,2\pi)\mbox{, }\alpha(t):=\frac{1}{2}(\cos(t),\sin(t))$. Then $\tilde{f}\circ\alpha$ has compact image and for $m\in\N$ sufficiently large $\tilde{f}\circ\alpha$ and $\Phi_{mT}\circ\tilde{f}\circ\alpha$ are disjoint due to the compactness of $\tilde{f}\circ\alpha$. Thus we find a curve $\beta$ in $\Omega$ that is disjoint from $\alpha$ and satisfies $\Phi_{mT}\circ\tilde{f}\circ\alpha=\tilde{f}\circ\beta$. As illustrated in \autoref{periodicannulus}, $\alpha$ and $\beta$ bound a compact annulus $\Omega_0$ in $\Omega$. The continuous image $\tilde{\Sigma}_0:=\tilde{f}(\Omega_0)$ is a compact annulus with the desired properties.\end{proof}
	\begin{center}
		\begin{figure}[ht]
			\def\svgwidth{0.3\linewidth}
			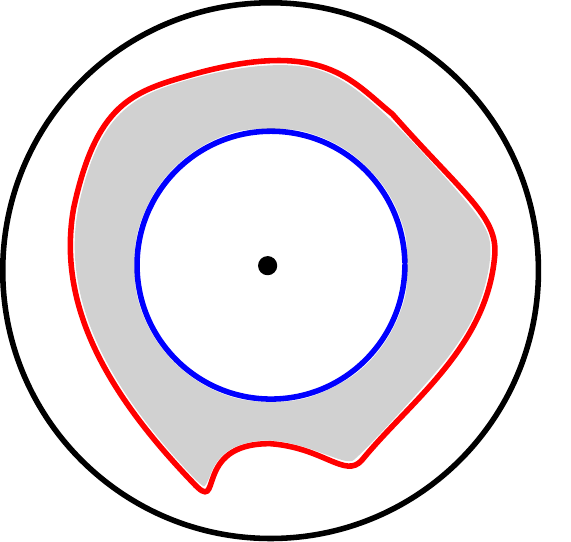
			\caption{Shaded in grey is the annulus $\Omega_0$ bounded by $\alpha$ and $\beta$}
			\label{periodicannulus}
		\end{figure}
	\end{center}
	
	% In the following we want to apply Alexandrov reflection. For this purpose we recall the notion of Alexandrov embeddedness:\index{Alexandrov refelection}\index{moving planes}\index{Alexandrov's reflection principle}
	% \begin{Def}Let $M$ be a Riemannian $3$-manifold. A surface $\widetilde{\Sigma}$ in $M$ is said to be of \emph{finite topology}\index{finite topology} if it is a proper immersion of $\mathbb{S}^2\setminus\{q_1,\ldots,q_k\}$. A surface $\widetilde{\Sigma}$ of finite topology is \emph{Alexandrov embedded}\index{Alexandrov embedded} in $M$ if there exists a compact Riemannian $3$-manifold $W$ with boundary $\partial W=\mathbb{S}^2$ and a proper immersion $F\colon W\setminus\{q_1,\ldots,q_k\}\to M$ whose boundary restriction $f\colon\mathbb{S}^2\setminus\{q_1,\ldots,q_k\}$ parametrises $\widetilde{\Sigma}$. 
	% \end{Def}
	% The case of a properly immersed annulus corresponds to $k=2$, that is, a properly immersed two-punctured sphere. 
	We study symmetries of singly periodic (Alexandrov) embedded \mcH-annuli in the product $\mathbb{H}^2\times\R$. The main property is that such surfaces always have a vertical mirror plane:
	
	%We know that a singly periodic (Alexandrov-)embedded \mcH-annulus is proper and obviously a bounded distance away from an axis. %In $\R^3$ it is a well-known result, see \cite{KKS}, that a cylindrically-bounded properly (Alexandrov-)embedded \mcH-annulus is a rotationally invariant \mcH-surface. In $\mathbb{H}^2\times\R$ this is true when the surface is contained in a vertical cylinder. Nevertheless, we always have a vertical mirror plane:
	
	%In $\R^3$ properly (Alexandrov-)embedded \mcH-annuli with $H>0$ are classified: they are cylindrically-bounded and as such rotationally invariant \mcH-surfaces with $H>0$. These are necessarily  whereas in $\mathbb{H}^2\times\R$ only those with a vertical axis are.
	
	\begin{Prop}\label{singleclass}Let $\widetilde{\Sigma}$ be a singly periodic %properly\marginpar{should also be true without properness since a singly periodic annulus is proper?} 
		(Alexandrov) embedded \mcH-annulus with axis $\tilde{\gamma}$ in $\H^2\times\R$. Then $\widetilde{\Sigma}$ has a vertical mirror plane $P$ such that $P$ separates $\widetilde{\Sigma}$ into simply connected \mcH-surfaces $\widetilde{\Sigma}_\pm$ with $\partial\widetilde{\Sigma}_\pm\subset P$ and $\widetilde{\Sigma}=\widetilde{\Sigma}_+\cup\widetilde{\Sigma}_-$.
		Moreover, if $\tilde{\gamma}$ is vertical then $\widetilde{\Sigma}$ is invariant under rotations about $\tilde{\gamma}$. If $\tilde{\gamma}$ is horizontal then the horizontal and the vertical plane containing $\tilde{\gamma}$ are mirror planes of $\widetilde{\Sigma}$.
		%\end{enumerate}
	\end{Prop}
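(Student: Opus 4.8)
The plan is to run Alexandrov's moving-plane method, following the treatment in \cite{KKS} for $\R^3$. First, $\widetilde{\Sigma}$ lies in a bounded tube around its axis: by \autoref{singleprop}(ii) the immersion is proper and $\widetilde{\Sigma}=\bigcup_{n\in\Z}\Phi_{nT}(\widetilde{\Sigma}_0)$ for a compact annulus $\widetilde{\Sigma}_0$, and since $\Phi_T$ is a translation along $\tilde{\gamma}$ this confines $\widetilde{\Sigma}$ to a solid tube $\mathcal{T}_R=\{q\colon\dist(q,\tilde{\gamma})\le R\}$ of some finite radius $R$. Being (Alexandrov) embedded, $\widetilde{\Sigma}$ bounds a mean convex region, so the reflection principle is available, including the boundary point version of the maximum principle needed when $\widetilde{\Sigma}$ is only Alexandrov embedded.

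To produce a vertical mirror plane I foliate $\H^2\times\R$ by vertical planes adapted to the tube. Write $\Gamma:=\Pi\circ\tilde{\gamma}$, a geodesic of $\H^2$ if $\tilde{\gamma}$ is non-vertical and a point $p_0$ if $\tilde{\gamma}$ is vertical; fix a unit-speed geodesic $\delta$ of $\H^2$ meeting $\Gamma$ orthogonally at $\delta(0)$, and let $\Gamma_t$ be the geodesic of $\H^2$ through $\delta(t)$ orthogonal to $\delta$. Then $\{\Gamma_t\}_{t\in\R}$ foliates $\H^2$, one has $\Gamma_0=\Gamma$ (respectively $p_0\in\Gamma_0$), and $\dist(\Gamma_t,\Gamma)=|t|$ since the subarc of $\delta$ between $\delta(0)$ and $\delta(t)$ is their common perpendicular. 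Hence the vertical mirror planes $P_t:=\Gamma_t\times\R$ foliate $\H^2\times\R$ and miss $\widetilde{\Sigma}$ for $|t|>R$. Sweeping $t$ down from $t=R$ and reflecting in $P_t$ the part $\widetilde{\Sigma}_t^{+}$ of $\widetilde{\Sigma}$ on the far side, the moving-plane method produces a largest $t_0$ at which the reflected part touches the rest of $\widetilde{\Sigma}$ from the correct side --- either in the interior, or along $P_{t_0}$ itself when $P_{t_0}$ is a one-sided contact plane --- and then the (boundary) maximum principle forces $\widetilde{\Sigma}$ to be invariant under the reflection $R_{t_0}$ in $P:=P_{t_0}$. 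Setting $\widetilde{\Sigma}_\pm:=\widetilde{\Sigma}\cap(\text{the two closed sides of }P)$, the halves are interchanged by $R_{t_0}$ with $\partial\widetilde{\Sigma}_\pm\subset P$, and the moving-plane method simultaneously exhibits each as a graph over a domain of the flat plane $P$; hence each $\widetilde{\Sigma}_\pm$ is simply connected, and, being distinct, $P$ separates $\widetilde{\Sigma}$. This yields the main assertion.

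For the additional statements the plane is pinned down using a circumscribed tube and the translation invariance. If $\tilde{\gamma}$ is vertical, take the solid cylinder over the smallest geodesic disk $D_*$ of $\H^2$ containing $\Pi(\widetilde{\Sigma})$ and let $p_*$ be its center; running the construction above with $\delta$ any geodesic through $p_*$ forces $t_0=0$, since a reflection in some $\Gamma_{t_0}$ with $t_0\neq0$ would carry $\Pi(\widetilde{\Sigma})$ into a disk strictly smaller than $D_*$. Thus every vertical plane through $\{p_*\}\times\R$ is a mirror plane, and the corresponding reflections generate all rotations about this line, so $\widetilde{\Sigma}$ is rotationally invariant about the vertical geodesic $\tilde{\gamma}:=\{p_*\}\times\R$. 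If $\tilde{\gamma}$ is horizontal, write $\Phi_T=(\psi_T,\id)$ with $\psi_T$ the hyperbolic translation of $\H^2$ with axis $\Gamma$; the vertical mirror plane $P=\Gamma_{t_0}\times\R$ obtained above must be $\Gamma\times\R$, for otherwise $R_{t_0}\Phi_T R_{t_0}$ would be a translation along a horizontal geodesic distinct from $\tilde{\gamma}$, giving $\widetilde{\Sigma}$ a symmetry group incompatible with its being an annulus (cf.\ \cite{KKS}). Finally the tube around the horizontal geodesic $\tilde{\gamma}$ is bounded in the $\R$-direction, so a second moving-plane argument, now sweeping the horizontal planes $\H^2\times\{c\}$ downward, produces a horizontal mirror plane, which we take to contain $\tilde{\gamma}$. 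The delicate points are the careful execution of the reflection argument for an Alexandrov-embedded (rather than embedded) annulus --- the enclosed mean convex region, the boundary maximum principle, and the dichotomy that prevents the reflected piece from escaping --- together with the topological check that $P$ splits the annulus into two simply connected halves and, in the two special cases, the identification of $P$ via the translation symmetry.
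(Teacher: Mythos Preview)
Your argument is correct and follows the same moving-plane strategy as the paper's, but it is considerably more detailed and differs substantively in the vertical-axis case: the paper simply observes that periodicity confines $\widetilde{\Sigma}$ to a vertical solid cylinder and then invokes Mazet's theorem (that cylindrically bounded properly embedded \mcH-surfaces in $\mathbb{H}^2\times\R$ are rotational), whereas you stay within the moving-plane framework and use the smallest-circumscribed-disk trick to force $t_0=0$ for every choice of $\delta$ through $p_*$. Your route is more elementary and self-contained; the paper's is shorter but imports a deeper result. You also supply two points the paper's terse proof leaves implicit: the simple-connectedness of the halves (via the graph property coming out of the reflection argument) and, in the horizontal case, the reason why the vertical mirror plane must actually be $\Gamma\times\R$. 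On this last item your justification ``symmetry group incompatible with its being an annulus'' is slightly misdirected in the $\mathbb{H}^2\times\R$ setting: the conjugate $R_{t_0}\Phi_T R_{t_0}$ is a translation along the ultraparallel geodesic $\Gamma_{2t_0}\times\{h\}$, and the contradiction is really with the tube bound (iterating this translation carries points of $\widetilde{\Sigma}$ arbitrarily far from $\Gamma$), not with $\pi_1$ as in the Euclidean KKS argument, since here the two hyperbolic translations do not commute.
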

	
	\begin{proof}Here we only have vertical and horizontal planes at hand. For a vertical axis $\tilde{\gamma}$ the periodicity implies that $\widetilde{\Sigma}$ is contained in a vertical cylinder. Mazet has shown in \cite{Mazet} that such surfaces in $\mathbb{H}^2\times\R$ are rotationally invariant. 
		
		If $\tilde{\gamma}$ is non-vertical then its projection is a geodesic of $\mathbb{H}^2$. Let $\tilde{c}$ be a geodesic in $\mathbb{H}^2$ which intersects the projection of $\tilde{\gamma}$ orthogonally. The family of geodesics orthogonal to $\tilde{c}$ defines a family of vertical planes $(P_s)_{s\in\R}$ such that $\tilde{\gamma}$ is in $P_0$. Alexandrov's moving planes argument shows that $P_0$ is a mirror plane of $\widetilde{\Sigma}$. If $\tilde{\gamma}$ is horizontal then we can reason in the same way with respect to horizontal planes.\end{proof}

	Daniel's correspondence immediately implies the following:
	
	\begin{Cor}\label{singleh2r}Let $\widetilde{\Sigma}\subset\mathbb{H}^2\times\R$ be a singly periodic (Alexandrov) embedded \mcH-annulus with $H>\frac{1}{2}$. Then the minimal sister surface $\Sigma_\pm\subset\mathbb{S}^3(4H^2-1,H)$ of $\widetilde{\Sigma}_\pm$ is bounded by horizontal geodesics $h_1$ and $h_2$ (covered infinitely often).\end{Cor}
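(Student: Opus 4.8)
The plan is to route the symmetry structure of \autoref{singleclass} through the boundary dictionary of the Daniel correspondence. First I would apply \autoref{singleclass} to produce a vertical mirror plane $P$ with $\widetilde\Sigma=\widetilde\Sigma_+\cup\widetilde\Sigma_-$, each half simply connected and with $\partial\widetilde\Sigma_\pm\subset P$. Since $P$ is a mirror plane the reflection $\sigma_P$ maps $\widetilde\Sigma$ onto itself and fixes $\widetilde\Sigma\cap P$ pointwise, so $\widetilde\Sigma$ meets $P$ orthogonally along $\widetilde\Sigma\cap P$; hence along each component of $\partial\widetilde\Sigma_\pm$ the conormal of $\widetilde\Sigma_\pm$ points in the direction normal to $P$, which is exactly the condition for these boundary components to be vertical mirror curves.

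Next I would invoke the Daniel correspondence. As $\widetilde\Sigma_\pm$ is simply connected and $H>1/2$, \autoref{danielcorr} together with \autoref{exdaniel}(b) yields an isometric minimal sister surface $\Sigma_\pm$ inside the Berger sphere $E(4H^2-1,H)=\mathbb{S}^3(4H^2-1,H)$, the base curvature $4H^2-1$ being positive. By \autoref{symmetryhom}(i) the sister isometry carries each vertical mirror curve in $\partial\widetilde\Sigma_\pm$ to a horizontal ambient geodesic of $\mathbb{S}^3(4H^2-1,H)$, so $\partial\Sigma_\pm$ is a union of horizontal geodesics. To pin down that there are exactly two of them and that each is covered infinitely often I would argue as follows. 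The translation $\Phi_T$ preserves $P$ --- obvious when the axis $\tilde\gamma$ is vertical, and otherwise $P$ is the unique vertical plane fixed by $(\Phi_s)_{s\in\R}$, as in the proof of \autoref{singleprop}. Passing to the compact quotient $\widetilde\Sigma/\langle\Phi_T\rangle$ afforded by \autoref{singleprop}(ii) (replacing $T$ by $2T$ if $\Phi_T$ swaps the two halves), the set $\widetilde\Sigma\cap P=\partial\widetilde\Sigma_\pm$ descends to a compact $1$-manifold separating a torus into the two open annuli $\widetilde\Sigma_\pm/\langle\Phi_T\rangle$; hence it is two parallel essential circles, and $\widetilde\Sigma\cap P$ itself is two properly embedded curves $c_1,c_2$, each the infinite cyclic cover of its circle in the $\Phi_T$-direction and therefore a non-compact line of infinite length. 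Since the Daniel correspondence is an isometry $\widetilde\Sigma_\pm\to\Sigma_\pm$ while a horizontal geodesic of $\mathbb{S}^3(4H^2-1,H)$ is a closed circle of finite length $4\pi/\sqrt{4H^2-1}$ by \eqref{lengthgeodesic}, the sister curve of each $c_i$ must wrap around its geodesic $h_i$ infinitely often.

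The hard part will be the topological bookkeeping in the middle paragraph: that $\widetilde\Sigma\cap P$ consists of exactly two non-compact lines, rather than, say, circles or a single line. This is the $\mathbb{H}^2\times\R$-analogue of what is done for properly embedded annuli in $\R^3$ in \cite{KKS}, and it rests entirely on the two inputs already available here, namely that $\widetilde\Sigma$ is an annulus whose two $P$-halves are simply connected (\autoref{singleclass}) and that $\widetilde\Sigma=\bigcup_{n\in\Z}\Phi_{nT}(\widetilde\Sigma_0)$ with $\widetilde\Sigma_0$ a compact annulus (\autoref{singleprop}(ii)). Once this is in place, the remaining steps are a direct translation through \autoref{danielcorr} and \autoref{symmetryhom}, which is why the corollary can be stated as an immediate consequence of the correspondence.
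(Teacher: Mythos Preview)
Your proposal is correct and follows exactly the route the paper intends: combine \autoref{singleclass} with \autoref{symmetryhom}(i) via \autoref{danielcorr}. The paper in fact gives no argument beyond the sentence ``Daniel's correspondence immediately implies the following'', so your topological bookkeeping for ``exactly two curves, each covered infinitely often'' supplies detail the paper leaves implicit.
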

	% will help us to clear 
	%\begin{itemize}
	% \item Do $h_1$ and $h_2$ intersect?
	% \item What can we say about the horizontal fields $F_1$ and $F_2$ of $h_1$ and $h_2$, respectively?
	% \item What is the topology of the minimal surface $\Sigma_\pm$?
	%\end{itemize}

	%\begin{proof}Bounded by closed and linked horizontal geodesics is clear. Why is $\widetilde{\Sigma}_\pm$ an annulus? Additional assumption needed? We know this for tilted cylinders, since their translational invariance induces corresponding isometries on $\mathbb{S}^3(\kappa,\tau)$.\end{proof}
	
	%\begin{Rem}\label{remsingleh2r}\marginpar{Besser an anderer Stelle}In $\R^3$ singly periodic properly (Alexandrov-)embedded \mcH-annuli with $H>0$ are classified. In $\H^2\times\R$ with $H>\frac{1}{2}$ only those with vertical axis are. With non-vertical axis we know examples: Tilted cylinders from \autoref{thmcylekt} and horizontal unduloids constructed by Manzano and Torralbo in \cite{ManzanoTorralbo}. However, we would like to have some kind of tilted unduloids, with varying neck sizes, not just cylinders. \autoref{singleh2r} gives a necessary condition for what kind of minimal surfaces we have to look for in $\mathbb{S}^3(4H^2-1,H)$, namely, minimal surfaces bounded by horizontal geodesics.\end{Rem}
	
\section{Revisiting known examples}\label{section2}
Our goal is to construct \emph{tilted unduloids}\index{tilted unduloid} in $\mathbb{H}^2\times\R$, that is, singly periodic (Alexandrov) embedded \mcH-annuli with $H>\frac{1}{2}$ whose axis is neither vertical nor horizontal. To construct such surfaces we want to construct suitable minimal surfaces bounded by horizontal geodesics in $\mathbb{S}^3(4H^2-1,H)$, as indicated by \autoref{singleh2r}. We revisit the known examples in $\mathbb{H}^2\times\R$, vertical and horizontal unduloids, and it turns out it is a reasonable assumption to look for embedded minimal \emph{annuli} bounded by \emph{linked}, i.e. non-intersecting, horizontal geodesics. Any pair of linked horizontal geodesics bounds an embedded minimal annulus, the spherical helicoid (see \autoref{bergerhelicoid}), which corresponds to a piece of a vertical unduloid. Therefore we need multiple solution theorems for the existence of tilted or even horizontal unduloids.

\subsection{Vertical and horizontal unduloids in \texorpdfstring{$\mathbb{H}^2\times\R$}{H2xR} and their sisters}\label{section21}The existence of \emph{vertical unduloids}\index{vertical unduloid} in $\mathbb{H}^2\times\R$ as \mcH-surfaces of revolution with $H>\frac{1}{2}$ about the fibre was established by Wu-Teh Hsiang and Wu-Yi Hsiang in \cite{Hsiang}. In \cite{ManzanoTorralbo} it is shown that a spherical helicoid, considered as a surface in $\mathbb{S}^3(4H^2-1,H)$, is the sister surface of such a surface of revolution. We study the spherical helicoid in $\mathbb{S}^3(4H^2-1,H)$ in somewhat more detail, namely we compute the shape operator of it, in order to determine which part of the spherical helicoid corresponds to one half of a vertical unduloid in $\mathbb{H}^2\times\R$. It turns out that for a vertical unduloid the sister surface is bounded by two horizontal circles whose horizontal fields are linearly independent. 

However, for a \emph{horizontal unduloid}, constructed as in \cite{ManzanoTorralbo}, we show that the boundary curves in the vertical mirror plane correspond to horizontal geodesics with the same horizontal field up to a sign. We finish this section with the observation that the sister curves of the horizontal unduloid bound at least two solutions; one solution corresponds to the horizontal unduloid and the other one to a piece of a vertical unduloid.

%It turns out that already for a vertical cylinder the boundary curves of the sister surface do not have the same horizontal direction. 

%It turns out that for one half of the embedded vertical unduloids (excluding the chain of spheres) the boundary curves of the sister surface do not have the same horizontal direction.
The first example is the vertical unduloid. It arises as the sister surface of the spherical helicoid\index{spherical helicoid} from \autoref{bergerhelicoid} if we choose $\ell\in\left[0,\frac{2\tau}{\kappa}\right]$. The parameter $c:=\ell/\left(\frac{4\tau}{\kappa}-\ell\right)$ then satisfies $0\leq c\leq1$, allowing us to consider the reparametrisation $f^c$ defined in \autoref{vundushape}. The parameter $c$ describes the \emph{neck size}\index{neck size} of the vertical unduloid, with the extremal cases $c=1$ and $c=0$ corresponding to a vertical cylinder and a sphere, respectively.

\begin{Prop}[for (ii) compare with {{\cite[Proposition 1]{ManzanoTorralbo}}}]\label{vundushape}Let $c\in[0,1]$ and consider the (reparametrised) spherical helicoid
	\[
	f^c\colon\R^2\to\mathbb{S}^3(\kappa,\tau),\qquad f^c(x,y):=\begin{pmatrix}\cos\left(\frac{\sqrt{\kappa}}{2} x\right)\exp\left(-ic\frac{\kappa}{4\tau}y\right)\\\sin\left(\frac{\sqrt{\kappa}}{2} x\right)\exp\left(i\frac{\kappa}{4\tau}y\right)\end{pmatrix}.
	\]
	It is an immersion on $\R^2$ for all $c\in(0,1]$, respectively on $\left(0,\pi/\sqrt{\kappa}\right)\times\R$ for $c=0$. It has the following properties:
	\begin{enumerate}[(i)]
		\item For all $\kappa>0$ and $\tau\in\R$ the spherical helicoid is a minimal surface and the curve $v:=f^c(\pi/\sqrt{\kappa},\cdot)$ is a vertical geodesic on $f^c$. Each meridian $x\mapsto f(x,y)$ is a horizontal $F$-circle with
		\[
		F=\frac{\partial f}{\partial x}(x,y)=\cos\left((c+1)\frac{\kappa}{4\tau}y\right)E_1+\sin\left((c+1)\frac{\kappa}{4\tau}y\right)E_2.\]
		
		\item For $H>\frac{1}{2}$ the sister curve $\widetilde{v}$ of $v$ in $\mathbb{S}^3(4H^2-1,H)$ is a curve of constant curvature $k_{\widetilde{v}}=2H+(c-1)\frac{4H^2-1}{4H}>1$ in a horizontal plane of $\mathbb{H}^2\times\R$. The sister surface of $f^c$ is a surface of revolution with constant mean curvature $H$.\index{vertical unduloid}
		
		\item The curves $h_1:=f^c(\cdot,0)$ and $h_2:=f^c(\cdot,T)$, where
		\[
		T=\frac{\pi}{\sqrt{k_{\tilde{v}}^2-1}}>0,
		\]
		%\[
		%T=\frac{2\pi\left(k_{\widetilde{v}}-\sqrt{k_{\widetilde{v}}^2-1}\right)}{1-\left(k_{\widetilde{v}}-\sqrt{k_{\widetilde{v}}^2-1}\right)^2}>0
		%\]
		bound the sister surface corresponding to one half of the vertical unduloid $\widetilde{f^c}$. For $c\in(0,1]$ the respective horizontal fields $F_1$ and $F_2$ are linearly independent, that is, they satisfy $F_1\neq
		\pm F_2$. The surface $f^c$ is embedded on $[0,2\pi)\times[0,T]$ for $c\in(0,1]$.
\end{enumerate}
\end{Prop}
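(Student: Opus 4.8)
The plan is to verify (i)--(iii) in turn, working throughout with the frame \eqref{BergerONBleft}, the table of covariant derivatives preceding \autoref{hopffibration}, and the first-order dictionary of \autoref{firstorderprop}, \autoref{symmetryhom} and \autoref{extensionhom}. For (i) I would first note that $f^c$ is, up to a linear reparametrisation of $y$ and a choice of angle modulo $2\pi$, the spherical helicoid \eqref{helicoidpar} of some pitch (for $c=0$ degenerating to a horizontal umbrella), so minimality is inherited from \autoref{bergerhelicoid}; alternatively one checks directly that the mean curvature vanishes, as in \cite{ManzanoTorralbo}. The immersion statement follows by computing $\partial_x f^c$ and $\partial_y f^c$ and checking linear independence on the stated domains. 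Evaluating at $x=\pi/\sqrt\kappa$ gives $f^c(\pi/\sqrt\kappa,y)=(0,e^{i\kappa y/4\tau})$, which by comparison with \eqref{vgeodesic} is a fibre, so $v$ is a vertical geodesic on $f^c$; and expressing $\partial_x f^c$ at $f^c(x,y)$ in the frame \eqref{BergerONBleft} yields directly $\partial_x f^c=\cos\phi\,E_1+\sin\phi\,E_2$ with $\phi=\phi(y)=(c+1)\tfrac\kappa{4\tau}y$, a unit horizontal field constant along each meridian, so by \eqref{hgeodesic} every meridian is a horizontal $F$-circle with $F=\cos\phi\,E_1+\sin\phi\,E_2$.

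For (ii) the assertions that $\widetilde{f^c}$ is a surface of revolution and has mean curvature $H$ are \cite[Proposition 1]{ManzanoTorralbo}: constancy of the mean curvature is immediate since $\widetilde{f^c}$ is the sister of a minimal surface, while the revolution property holds because the screw motions $\Psi_y$ preserving $f^c$ correspond, under the Daniel correspondence, to rotations of $\mathbb{H}^2\times\R$ about a vertical geodesic. To obtain $k_{\widetilde v}$ I would compute the shape operator $S$ of $f^c$ along the fibre $v$. There $\{\partial_x f^c,\partial_y f^c\}$ is orthonormal with $\partial_x f^c=F$ and $\partial_y f^c=-\xi$; since every meridian is an ambient geodesic, the normal curvature along $\partial_x$ vanishes, and minimality then forces it to vanish along $\partial_y$ as well, so $S\partial_x=\mu\,\partial_y$ and $S\partial_y=\mu\,\partial_x$ with $\mu=\langle N,\nabla_{\partial_y}\partial_x\rangle$. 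Using the table of covariant derivatives for $\nabla_{-\xi}E_i$, the $y$-derivatives of $\cos\phi$ and $\sin\phi$, and $N|_v=\pm(-\sin\phi\,E_1+\cos\phi\,E_2)$, one finds $\mu=\pm\big((c-1)\tfrac{\kappa}{4\tau}+\tau\big)$. By \autoref{symmetryhom}(ii) and \autoref{extensionhom} the sister curve $\widetilde v$ is a mirror curve in a horizontal plane, and the relations of \autoref{firstorderprop} identify $d\widetilde f(\partial_x)$ with the vertical field $\mp\widetilde\xi$ and $d\widetilde f(\partial_y)$ with the horizontal tangent $\widetilde v'$; hence $\widetilde S=JS+H\,\id$ is diagonal in $\{\partial_x,\partial_y\}$ with eigenvalue $H+\mu$ in the direction $\partial_y$. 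Since the geodesic curvature in $\mathbb{H}^2$ of a mirror curve lying in a totally geodesic horizontal slice equals the corresponding principal curvature, $k_{\widetilde v}=H+\mu$; inserting $\kappa=4H^2-1$, $\tau=H$ and fixing the sign gives $k_{\widetilde v}=2H+(c-1)\tfrac{4H^2-1}{4H}$, and $k_{\widetilde v}>1$ is the elementary bound $k_{\widetilde v}\ge\tfrac{4H^2+1}{4H}>1$, with equality at $c=0$.

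For (iii), being a surface of revolution $\widetilde{f^c}$ is cut by its vertical mirror plane into halves $\widetilde\Sigma_\pm$ as in \autoref{singleclass}, and the two boundary mirror curves of $\widetilde\Sigma_+$ are exchanged by the rotation of angle $\pi$ about the axis; by \autoref{symmetryhom}(i) their sisters are horizontal geodesics, the first of which we may take to be $h_1=f^c(\cdot,0)$. The rotation of angle $\theta$ about the axis corresponds to $\Psi_y$, where $y$ is the arclength along the circle $\widetilde v$ subtending $\theta$; since $\widetilde v$ has constant geodesic curvature $k_{\widetilde v}>1$ in $\mathbb{H}^2$ it is a circle of radius $r$ with $\sinh r=1/\sqrt{k_{\widetilde v}^2-1}$, hence of length $2\pi/\sqrt{k_{\widetilde v}^2-1}$, so the half-turn corresponds to $y=\pi/\sqrt{k_{\widetilde v}^2-1}=T$; thus $h_2=\Psi_T(h_1)=f^c(\cdot,T)$ and the sister of $\widetilde\Sigma_+$ is $f^c$ on $\R\times[0,T]$, with $h_1$ and $h_2$ covered infinitely often as demanded by \autoref{singleh2r}. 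For the horizontal fields, $F_1=E_1$ while $F_2=\cos\theta\,E_1+\sin\theta\,E_2$ with $\theta=(c+1)\tfrac{\kappa}{4\tau}T$; clearing denominators, $F_1\ne\pm F_2$ reduces to $\theta\in(0,\pi)$, which in turn reduces to $4ca^2<4Hca$, i.e.\ to $a<H$ with $a=\tfrac{4H^2-1}{4H}$, which is immediate. Finally the pitch of $f^c$ viewed as a helicoid \eqref{helicoidpar} equals the length $T$ of $v|_{[0,T]}$, and $\theta<\pi$ also forces $T<4\tau\pi/\kappa$, so \autoref{bergerhelicoid}(ii) yields that $f^c$ is embedded on $[0,2\pi)\times[0,T]$ for $c\in(0,1]$.

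The main obstacle I expect is the bookkeeping in part (ii): one has to transport $S$ along $v$ through $\widetilde S=JS+H\,\id$ keeping all orientation conventions compatible, so that the off-diagonal entry of $S$ lands with the correct sign and $k_{\widetilde v}$ comes out as $H+\mu$ and not $H-\mu$. I also expect the cleanest justification of the surface-of-revolution property, and hence of the identification of $\Psi_y$ with a rotation about the axis used in (iii), to require the equivariance properties of the Daniel correspondence recorded in \cite{ManzanoTorralbo} rather than anything softer.
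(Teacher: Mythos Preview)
Your proposal follows the paper's proof closely: part (i) is identical; in (ii) you compute the shape operator along $v$ exactly as the paper does and obtain $k_{\widetilde v}$ via $\widetilde S=JS+H\,\id$; in (iii) both approaches compute the half-circumference of the circle $\widetilde v$ to identify $T$. There are two points worth flagging.

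First, a slip in the embeddedness argument: the pitch $\ell$ of $f^c|_{\R\times[0,T]}$ in the sense of \eqref{helicoidpar} is \emph{not} the length $T$ of $v|_{[0,T]}$ at $x=\pi/\sqrt\kappa$, but the length of the vertical segment $f^c(0,\cdot)|_{[0,T]}$, which has speed $c$ and hence length $cT$. Your argument still goes through, since $\theta<\pi$ gives $T<\tfrac{4\tau\pi}{(c+1)\kappa}$ and therefore $cT<\tfrac{4c\tau\pi}{(c+1)\kappa}<\tfrac{4\tau\pi}{\kappa}$; but the intermediate identification of the pitch should be corrected.

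Second, in (iii) your route is in fact more economical than the paper's. The paper proves $F_1\ne\pm F_2$ by showing that $c\mapsto(c+1)\tfrac{4H^2-1}{4H}T$ is strictly decreasing and then invoking the endpoint values from \autoref{exverticalcylinder}; your direct reduction, using $4H^2-1=4Ha$ so that $k_{\widetilde v}^2-1=4Hca+(c-1)^2a^2$, collapses $\theta<\pi$ to $4ca^2<4Hca$, i.e.\ $a<H$, which is immediate. Similarly the paper proves $cT<4\tau\pi/\kappa$ by a separate monotonicity argument, whereas (once the pitch is fixed to $cT$) you get it for free from $\theta<\pi$. For the surface-of-revolution statement in (ii) the paper argues via ``$k_{\widetilde v}>1$ implies $\widetilde v$ is a circle in $\mathbb H^2$'' rather than via equivariance of the correspondence; both routes are sketchy on their own and ultimately lean on \cite[Proposition~1]{ManzanoTorralbo}, as you anticipated.
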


\begin{Rem}We note that $f^c$ has also been studied in \cite[Proposition 1]{ManzanoTorralbo}. They sketch the arguments needed to show that the \mcH-sister surface of $f^c$ is rotationally invariant, but they do not determine the piece of $f^c$ corresponding to one half of a vertical unduloid. 
\end{Rem}
\begin{center}
	\begin{figure}[ht]
		\def\svgwidth{0.7\linewidth}
		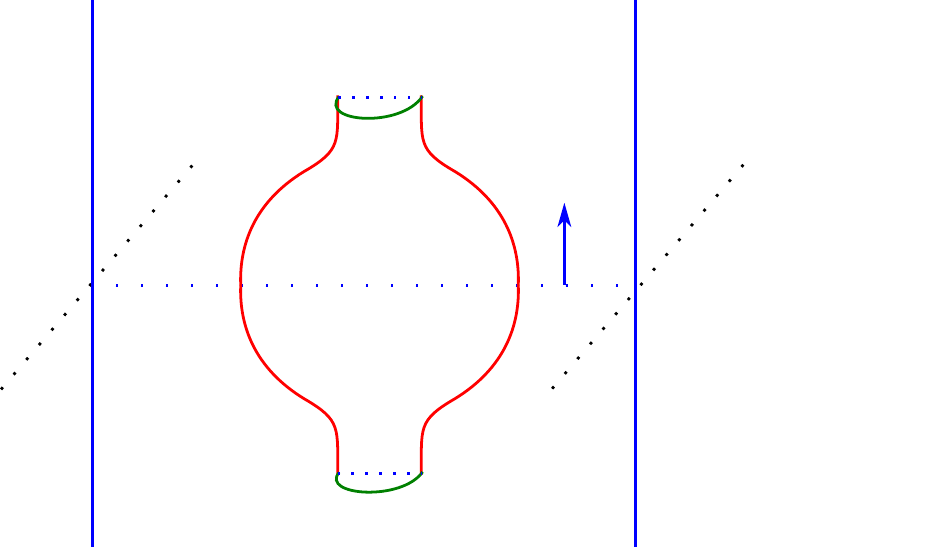
		\caption{Sister surface of $f^c$ in $\mathbb{H}^2\times\R$; $h_1$ and $h_2$ are chosen as in \autoref{vundushape} (iii) and the neck is contained in a horizontal plane with normal $\widetilde{\xi}$}
		\label{parametrizationh2}
	\end{figure}
\end{center} 
The extremal values for $c$ are instructive and useful for the proof of (iii), so we first consider an example before passing to the proof of \autoref{vundushape}.%\newpage

\begin{Ex}[Vertical cylinder and sphere]\label{exverticalcylinder}We look at $c=1$ and $c=0$.
	\begin{enumerate}[(a)]
		\item The vertical cylinder corresponds to $c=1$. In that case we have $k_{\widetilde{v}}=2H$ and thus $T=\frac{\pi}{\sqrt{4H^2-1}}$. By \autoref{vundushape} (i) and (iii) we have $h_1^\prime=E_1$ and
		\[
		h_2^\prime=\cos\left(\frac{\sqrt{4H^2-1}}{2H}\pi\right)E_1+\sin\left(\frac{\sqrt{4H^2-1}}{2H}\pi\right)E_2.
		\]
		We have $\frac{\sqrt{4H^2-1}}{2H}\in(0,1)$ for $H>\frac{1}{2}$ so that $F_2\neq-F_1$.
		\item The \mcH-sphere corresponds to $c=0$. We then have $k_{\tilde{v}}=2H-\frac{4H^2-1}{4H}=\frac{4H^2+1}{4H}$ and consequently
		\[
		T=\frac{\pi}{\sqrt{\left(\frac{4H^2+1}{4H}\right)^2-1}}=\pi\cdot \frac{4H}{4H^2-1}.\]
		This shows $F_1=-F_2$. In fact, $h_1$ and $h_2$ are part of the same horizontal geodesic, just with opposite orientations.
	\end{enumerate}
\end{Ex}

\begin{proof}[Proof of \autoref{vundushape}]For (i) we argue as in \autoref{bergerhelicoid} to see that $f^c$ is a minimal surface. It is also clear that $v$ is a vertical geodesic since its Hopf projection is a point. The claim about the meridians and the horizontal field $F$ follows in view of \eqref{hgeodesic}.
	%A computation shows that $f^c$ is a minimal surface and that $v$ is a vertical geodesic. 
	
	(ii): Let $v_1:=\frac{\partial f^c}{\partial x}(x,y)$ and $v_2:=\frac{\partial f^c}{\partial y}(x,y)$. At $x=\pi/\sqrt{\kappa}$ we have
	\[
	v_1=\cos\left((c+1)\frac{\kappa}{4\tau}y\right)E_1+\sin\left((c+1)\frac{\kappa}{4\tau}y\right)E_2\quad\mbox{and}\quad v_2=-\xi.
	\]
	Thus $N=-\sin\left((c+1)\frac{\kappa}{4\tau}y\right)E_1+\cos\left((c+1)\frac{\kappa}{4\tau}y\right)E_2$ is the normal at $x=\pi/\sqrt{\kappa}$. Finally we note $\nabla_{v_2}v_1=\left((c-1)\frac{\kappa}{4\tau}+\tau\right)N$ at $x=\pi/\sqrt{\kappa}$, so that the shape operator is
	\[
	S=\begin{pmatrix}0&(c-1)\frac{\kappa}{4\tau}+\tau\\(c-1)\frac{\kappa}{4\tau}+\tau&0\end{pmatrix}.
	\]
	For $\kappa=4H^2-1$ and $\tau=H$ Daniel's correspondence and \eqref{eq:daniel} imply
	\[
	k_{\widetilde{v}}=H+\left\langle S\begin{pmatrix}0\\1\end{pmatrix},\begin{pmatrix}0&1\\-1&0\end{pmatrix}\begin{pmatrix}0\\1\end{pmatrix}\right\rangle=2H+(c-1)\frac{4H^2-1}{4H}.
	\]
	We now show $k_{\widetilde{v}}>1$:
	\begin{align*}
	k_{\widetilde{v}}&=2H+(c-1)\frac{4H^2-1}{4H}=\frac{8H^2+(c-1)(4H^2-1)}{4H}\\
	&=\frac{8H^2+c(4H^2-1)-(4H^2-1)}{4H}=\frac{4H^2+1+c(4H^2-1)}{4H}\\
	&>\frac{4H^2+1}{4H}=\frac{4H^2-4H+1+4H}{4H}=\frac{(2H-1)^2+4H}{4H}>\frac{4H}{4H}=1.
	\end{align*}
	Curves in $\mathbb{H}^2$ with constant geodesic curvature greater than $1$ are rotationally invariant, which proves the claim about the sister surface of $f^c$.
	
	(iii): Finally we compute which part of $f^c$ corresponds to one half of the vertical unduloid $\widetilde{f^c}$. We need the following auxiliary result to have a reference for ``one half'':
	
	\begin{Lem}Let $c\colon[0,L]\to\mathbb{H}^2$ be a unit-speed parametrisation of a simple closed circle in $\mathbb{H}^2$ with constant geodesic curvature $k_{c}>1$. Then  $R=\coth^{-1}(k_{c})$ is the intrinsic radius of $c$.\end{Lem}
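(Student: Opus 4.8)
The plan is to recognise $c$ as a metric (geodesic) circle and then read off its radius from its curvature. The one input needed is the classical fact that a metric circle of intrinsic radius $R>0$ in $\mathbb{H}^2$ has constant geodesic curvature $\coth R$, taken with respect to the normal pointing towards its centre. I would justify this by passing to geodesic polar coordinates centred at the centre of the circle, in which the hyperbolic metric reads $dr^2+\sinh^2(r)\,d\theta^2$: for a rotationally symmetric metric $dr^2+\varphi(r)^2\,d\theta^2$ the coordinate circle $\{r=R\}$ has geodesic curvature $\varphi'(R)/\varphi(R)$, which for $\varphi=\sinh$ is $\coth R$. (Gauss--Bonnet applied to a hyperbolic disc of radius $R$ gives the same: using that its boundary circle has length $2\pi\sinh R$, that the disc has area $2\pi(\cosh R-1)$, and that the Gauss curvature is $-1$, one gets, with $k$ the constant geodesic curvature of the boundary, $k\cdot 2\pi\sinh R-2\pi(\cosh R-1)=2\pi$, hence $k=\coth R$.) Since $r\mapsto\coth r$ is a strictly decreasing bijection of $(0,\infty)$ onto $(1,\infty)$, the relation $\coth R=k_c$ has the unique solution $R=\coth^{-1}(k_c)\in(0,\infty)$, and this is precisely where the hypothesis $k_c>1$ enters.

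It then remains to confirm that the given simple closed curve $c$ really is a metric circle. If ``circle'' is understood from the outset to mean metric circle, there is nothing to prove here and we conclude by the previous paragraph. Otherwise I would argue by the uniqueness theorem for curves with prescribed geodesic curvature: put $R_0:=\coth^{-1}(k_c)$ and $q:=c(0)$, let $\nu$ be the unit normal along $c$ with $\nabla_{c'}c'=k_c\,\nu$ (which exists because $k_c>0$), and set $m:=\exp_q\!\bigl(R_0\,\nu(0)\bigr)$. The metric circle $\Gamma$ of radius $R_0$ about $m$ passes through $q$, is tangent to $c'(0)$ there, and by the previous paragraph has constant geodesic curvature $k_c$ with curvature vector pointing towards $m$, i.e.\ in the direction $\nu(0)$. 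Hence a suitable unit-speed parametrisation of $\Gamma$ and the curve $c$ both solve the second-order ODE $\nabla_{\gamma'}\gamma'=k_c\,J\gamma'$, where $J$ is the rotation by $\pi/2$ of $T\mathbb{H}^2$ normalised so that $Jc'(0)=\nu(0)$, with the same initial data $\gamma(0)=q$, $\gamma'(0)=c'(0)$; by ODE uniqueness $c$ traces out $\Gamma$. Therefore $c$ is a metric circle of intrinsic radius $R_0=\coth^{-1}(k_c)$. One could equally invoke the classification of constant-geodesic-curvature curves in $\mathbb{H}^2$ as orbits of one-parameter isometry subgroups, among which only the circles ($k>1$) are closed.

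Neither ingredient is hard: the curvature of a geodesic circle and ODE uniqueness for the prescribed-curvature equation are both standard, so the statement has no genuine obstacle. The only point that needs a little care is bookkeeping orientations so that the positive quantity $\coth R$ is matched against $k_c$ with the correct sign---and it is exactly the assumption $k_c>1$, rather than merely $k_c\neq0$, that makes $\coth^{-1}(k_c)$ well defined and positive.
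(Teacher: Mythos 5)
Your proposal is correct. It is organised slightly differently from the paper's proof, though one of your ingredients coincides with it exactly: the paper applies Gauss--Bonnet directly to the disc $D$ bounded by $c$, taking the hypothesis ``circle'' at face value so that the unknown intrinsic radius $R$ immediately gives $L=2\pi\sinh(R)$ and $\area(D)=2\pi(\cosh(R)-1)$, whence $-2\pi(\cosh(R)-1)+2\pi\sinh(R)k_c=2\pi$ and $k_c=\coth(R)$ --- which is precisely the computation you relegate to your parenthesis. Your primary route instead establishes the model fact (a metric circle of radius $R$ has curvature $\coth R$) via geodesic polar coordinates and the formula $\varphi'/\varphi$, and then adds a rigidity step, identifying $c$ with a metric circle through ODE uniqueness for $\nabla_{\gamma'}\gamma'=k_c\,J\gamma'$. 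That extra step is not needed under the paper's reading of the hypothesis, but it is a genuine bonus: it makes the lemma valid under the weaker assumption that $c$ is merely a simple closed curve of constant geodesic curvature $k_c>1$, a fact the paper uses elsewhere (constant-curvature curves with $k>1$ in $\mathbb{H}^2$ are rotationally invariant) without proof. Both arguments correctly locate where $k_c>1$ enters, namely in inverting $\coth$ on $(1,\infty)$.
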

	\begin{proof}Let $D$ denote the closed disc bounded by $c$. By Gau\ss{}-Bonnet we have
		\[
		\int_{D}(-1)\,dA_{\mathbb{H}^2}+\int_{c}k_{c}\,dt=2\pi\chi(D).
		\]
		For the (unknown) intrinsic radius $R$ we know $L=2\pi\sinh(R)$ and thus the area of $D$ in $\mathbb{H}^2$ equals $2\pi(\cosh(R)-1)$. Since $\chi(D)=1$ we get
		\[
		-2\pi(\cosh(R)-1)+2\pi\sinh(R)k_c=2\pi\iff k_c=\coth(R).\qedhere
		\]
	\end{proof}
	\emph{Proof of \autoref{vundushape} continued:} We know that $\widetilde{v}$ is a curve of constant geodesic curvature $k_{\widetilde{v}}>1$. Its projection onto $\mathbb{H}^2$ is therefore a curve $c_R$ with intrinsic radius $R=\coth^{-1}(k_{\tilde{v}})$. The circumference of $c_R$ is $2\pi\sinh(R)$, so that one half of a vertical unduloid is realised at $T=\pi\sinh(\coth^{-1}(k_{\tilde{v}}))$. 
	
	Using $\sinh(x)=\frac{\exp(x)-\exp(-x)}{2}$ and $\coth^{-1}(y)=\frac{1}{2}\ln\left(\frac{1+y}{1-y}\right)$ we obtain
	\[
	T=\frac{\pi}{\sqrt{k_{\tilde{v}}^2-1}}.
	\]
	In order to prove the claim about the horizontal fields of $h_1$ and $h_2$ we will show that
	\[\mathcal{T}\colon[0,1]\to\R,\qquad c\mapsto (c+1)\frac{4H^2-1}{4H}\frac{\pi}{\sqrt{k_{\tilde{v}}^2-1}}	
	\]
	is strictly decreasing on $[0,1)$. In \autoref{exverticalcylinder} we computed $\mathcal{T}(0)=\pi$ and $\mathcal{T}(1)\in(0,\pi)$, so that monotonicity of $\mathcal{T}$ on $[0,1)$ shows $\mathcal{T}(c)\in(0,\pi)$ for all $c\in(0,1]$.
	
	For the monotonicity of $\mathcal{T}$ we note 
	\[
	\mathcal{T}(c)=\left(k_{\tilde{v}}-\frac{1}{2H}\right)\frac{\pi}{\sqrt{k_{\tilde{v}}^2-1}}=g(k_{\tilde{v}}(c)),
	\]
	where 
	\[
	g\colon \left[k_{\tilde{v}}(0),k_{\tilde{v}}(1)\right)\to\R,\qquad g(t):=\left(t-\frac{1}{2H}\right)\frac{\pi}{\sqrt{t^2-1}}\]
	is strictly decreasing and $k_{\tilde{v}}$ strictly increasing.

We use \autoref{bergerhelicoid} (ii) to prove that the surface $f^c$ is embedded on $[0,2\pi)\times[0,T]$ for $c\in(0,1]$. In view of this result, it is sufficient to verify $c T<4\tau\pi/\kappa$. In our case we have $\kappa=4H^2-1$ and $\tau=H$, so that this is equivalent to
\[c\frac{4H^2-1}{4H}\frac{\pi}{\sqrt{k_{\tilde{v}}^2-1}}<\pi\mbox{.}\]
For $c=1$ we have $k_{\tilde{v}}=2H$ and the claim is true due to $H>1/2$. The representation $k_{\tilde{v}}=2H+(c-1)(4H^2-1)/(4H)$ implies $c \frac{4H^2-1}{4H}=k_{\tilde{v}}-\frac{4H^2+1}{4H}$. The claim now follows if
\[
 h\colon \left[k_{\tilde{v}}(0),k_{\tilde{v}}(1)\right)\to\R\mbox{,}\qquad h(t):=\left(t-\frac{4H^2+1}{4H}\right)\frac{\pi}{\sqrt{k_{\tilde{v}}^2-1}}
\]
is strictly increasing. A computation and $t>1$ due to $k_{\tilde{v}}>1$ yield
\[
 h^\prime(t)=\pi\frac{\frac{4H^2+1}{4H}t-1}{(t^2-1)^{3/2}}>\pi\frac{4H^2+1-4H}{(t^2-1)^{3/2}}=\pi\frac{(2H-1)^2}{(t^2-1)^{3/2}}>0\mbox{.}\qedhere
\]

% we show that the length of $f^c(0,y)=(\exp(-icy\kappa/(4\tau)),0)$ for $y\in[0,T]$ is smaller than $
\end{proof}

There are more restrictions on the boundary curves of a horizontal unduloid:\index{horizontal unduloid}
\begin{Prop}\label{horizontalunduloid}
	Assume $\widetilde{\Sigma}$ is a singly periodic properly (Alexandrov) embedded \mcH-annulus in $\mathbb{H}^2\times\R$ with $H>\frac{1}{2}$ and horizontal axis $\tilde{\gamma}$. Then the sister curves $h_1$ and $h_2$ from the intersection of $\widetilde{\Sigma}$ with its vertical mirror plane are horizontal geodesics and their respective horizontal fields $F_1$ and $F_2$ satisfy $F_1=-F_2$.
\end{Prop}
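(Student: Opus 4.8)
The plan is to use both mirror planes of $\widetilde{\Sigma}$ furnished by \autoref{singleclass}. Since $\tilde{\gamma}$ is horizontal, the vertical plane $P$ and the horizontal plane $Q$ containing $\tilde{\gamma}$ are mirror planes, and $P$ splits $\widetilde{\Sigma}$ into two simply connected halves. As $\widetilde{\Sigma}$ is an annulus, $\widetilde{\Sigma}\cap P$ has exactly two components $\tilde{c}_1,\tilde{c}_2$; these are the boundary curves of one half $\widetilde{\Sigma}_+$ and are vertical mirror curves (the conormal is perpendicular to $P$, by the usual maximum-principle argument ruling out tangency of $\widetilde{\Sigma}$ to $P$). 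By \autoref{symmetryhom} (i) the sister curves $h_1,h_2$ on the minimal sister $\Sigma_+\subset\mathbb{S}^3(4H^2-1,H)$ are horizontal geodesics, and by \autoref{singleh2r} they bound $\Sigma_+$, covered infinitely often. This settles the first assertion, and it remains to prove $F_1=-F_2$.

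For the horizontal-field identity I would bring in the horizontal mirror plane $Q$. The reflection $\sigma_Q$ in $Q$ preserves each side of $P$, so it restricts to an involutive isometry of $\widetilde{\Sigma}_+$ with fixed curve $\tilde{d}:=\widetilde{\Sigma}_+\cap Q$; by \autoref{symmetryhom} (ii) the sister of $\tilde{d}$ is a vertical geodesic $d$ in $\Sigma_+$. Combining \autoref{symmetryhom} (ii), \autoref{extensionhom}, the smooth Schwarz extension across a boundary vertical geodesic, and uniqueness of the Daniel sister, one obtains that $\Sigma_+$ is invariant under the $\pi$-rotation $R_d$ about $d$ (the Schwarz reflection about $d$), in correspondence with the symmetry $\sigma_Q$ of $\widetilde{\Sigma}_+$.

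Next I would determine the action of $R_d$ on $\partial\Sigma_+=h_1\cup h_2$. The curve $\tilde{d}$ separates the strip $\widetilde{\Sigma}_+$ into two sub-strips exchanged by $\sigma_Q$, one bounded by $\tilde{c}_1$ and $\tilde{d}$, the other by $\tilde{c}_2$ and $\tilde{d}$; passing to sisters, $d$ separates $\Sigma_+$ into $\Sigma_{++}$, with $\partial\Sigma_{++}=h_1\cup d$, and its reflected copy $R_d(\Sigma_{++})$, so $\partial\Sigma_+=h_1\cup R_d(h_1)$. Since $h_1$ and $h_2$ are distinct boundary components this forces $R_d(h_1)=h_2$. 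Finally, the action of $R_d$ on a horizontal geodesic is explicit: after a left translation $d$ may be taken to be the vertical geodesic through $(1,0)$, so by \eqref{isometry} the $\pi$-rotation about $d$ is $(z,w)\mapsto(z,-w)$, and substituting the parametrisation \eqref{hgeodesic} of $h_1$ shows that $R_d(h_1)$ is a horizontal geodesic through $(z_0,-w_0)$ with angle $\varphi_1\pm\pi$, hence with horizontal field $-F_1$. Orienting $\tilde{c}_1,\tilde{c}_2$ consistently along $\tilde{\gamma}$---which $\sigma_Q$ preserves and the correspondence then transports to $h_1,h_2$---this gives $F_2=-F_1$.

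I expect the main obstacle to be the separation claim used in the third step: that $\tilde{d}=\widetilde{\Sigma}_+\cap Q$ has $\tilde{c}_1$ on one side and $\tilde{c}_2$ on the other, equivalently that $\sigma_Q$ interchanges the two boundary curves of $\widetilde{\Sigma}_+$ rather than fixing each. (The latter is what occurs for a vertical unduloid, whose horizontal neck plane meets the half in a cross-cut, and is precisely why $F_1=-F_2$ fails there.) Here it should follow from the fact that $\widetilde{\Sigma}$ does not meet its axis $\tilde{\gamma}=P\cap Q$: then $\widetilde{\Sigma}\cap P=\tilde{c}_1\cup\tilde{c}_2$ avoids $\tilde{\gamma}$, and $\sigma_Q$, acting on $P$ as the reflection across $\tilde{\gamma}$, must swap the two components. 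Everything else is either immediate from \autoref{section1} or the short computation with \eqref{hgeodesic} and \eqref{isometry} above, the orientation bookkeeping in the last step being the only further point that needs care.
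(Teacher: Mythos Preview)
Your approach is the same as the paper's: use the horizontal mirror plane from \autoref{singleclass} to obtain, via \autoref{extensionhom}, a vertical geodesic in the sister surface about which the $\pi$-rotation is a symmetry, and then read off $F_1=-F_2$ from the action of this rotation on horizontal fields. The paper's proof is in fact only three sentences and does not spell out either the swapping of $h_1$ and $h_2$ or the computation with \eqref{hgeodesic}; your version is strictly more detailed, and the point you flag---that $\sigma_Q$ must interchange rather than fix the boundary components, which you reduce to $\widetilde{\Sigma}\cap\tilde{\gamma}=\emptyset$---is exactly the one step the paper leaves implicit.
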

\begin{proof}
	The surface $\widetilde{\Sigma}$ has a horizontal mirror plane $P$ by \autoref{singleclass} (ii). By \autoref{extensionhom} the reflection through $P$ corresponds to a geodesic reflection about a vertical geodesic $v$ contained in the minimal sister surface $\Sigma\subset\mathbb{S}^3(4H^2-1,H)$. The surface $\Sigma$ is invariant under this rotation, which implies $F_1=-F_2$.%\marginpar{Regarding the proof of Proposition 4.5: One can compute this also directly: The rotation of angle $\pi$ about $\mathbb{S}^1\times\{0\}$ is given by $\rho(z,w)=(z,-w)$. Applying $\rho$ to a horizontal geodesic $h$ as in \eqref{hgeodesic} shows $(\rho\circ h)^\prime=-h^\prime$.}
\end{proof}
Let us consider the converse of \autoref{horizontalunduloid}: Given horizontal geodesics $h_1$ and $h_2$ in $\mathbb{S}^3(4H^2-1,H)$ with horizontal fields $F_1$ and $F_2$ satisfying $F_1=-F_2$. Do they bound a minimal surface whose sister surface is one half of a horizontal unduloid? They do for the case that $h_1$ and $h_2$ can be joined by a segment of a vertical geodesic $v$ and $\lambda=L(v)$ within a certain range. The parameter $\lambda$ corresponds to the neck size of a horizontal unduloid. In fact, they bound also another embedded minimal annulus corresponding to a piece of a vertical unduloid:

\begin{Theorem}\label{examplehorizontal}
	There is a one-parameter family of horizontal geodesics $h_1$ and $h_2$ in $\mathbb{S}^3(4H^2-1,H)$ with horizontal fields $F_1$ and $F_2$ satisfying $F_1=-F_2$ such that each pair bounds two embedded minimal annuli $\Sigmav$ and $\Sigmanv$. The annulus $\Sigmav$ corresponds to a piece of a vertical unduloid in $\mathbb{H}^2\times\R$ and $\Sigmanv$ corresponds to one half of a horizontal unduloid in $\mathbb{H}^2\times\R$. %The annuli $\Sigmav$ and $\Sigmanv$ are not congruent by an ambient isometry in $\mathbb{S}^3(4H^2-1,H)$.
\end{Theorem}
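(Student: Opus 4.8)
The plan is to read off the required family of boundary contours from the known horizontal unduloids, and to produce the second minimal annulus as the spherical helicoid of \autoref{bergerhelicoid}; the two solutions are then distinguished via the Daniel correspondence.

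First I would fix $H>\tfrac12$ and invoke the existence of horizontal unduloids due to Manzano and Torralbo \cite{ManzanoTorralbo}: they form a one-parameter family $\bigl(\widetilde{\Sigma}^{\lambda}\bigr)$ of properly (Alexandrov) embedded \mcH-annuli with horizontal axis, degenerating to a horizontal cylinder and to a chain of \mcH-spheres. For a fixed member $\widetilde{\Sigma}=\widetilde{\Sigma}^{\lambda}$, \autoref{singleclass} gives a vertical mirror plane $P$ separating $\widetilde{\Sigma}$ into simply connected \mcH-surfaces $\widetilde{\Sigma}_{\pm}$ with $\partial\widetilde{\Sigma}_{\pm}\subset P$. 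Let $\Sigmanv=\Sigmanv^{\lambda}$ be the minimal sister of $\widetilde{\Sigma}_{+}$ in $\mathbb{S}^3(4H^2-1,H)$. By \autoref{singleh2r} it is a minimal annulus bounded by horizontal geodesics $h_1,h_2$ (covered infinitely often), and by \autoref{horizontalunduloid} their horizontal fields satisfy $F_1=-F_2$. Thus $\{h_1,h_2\}$ already bounds an embedded minimal annulus $\Sigmanv$ corresponding, by construction, to one half of a horizontal unduloid, and letting $\lambda$ range over the family produces the asserted one-parameter family of contours.

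Next I would exhibit the second solution. First one checks that $h_1$ and $h_2$ are linked: they are not coincident --- otherwise \autoref{exverticalcylinder} (b) shows their sister would be a piece of an \mcH-sphere rather than of a horizontal unduloid --- and they cannot intersect, since by the first order description \autoref{firstorderprop} an intersection of $h_1$ and $h_2$ would force a common point with matching conormals of the two boundary curves of $\widetilde{\Sigma}_{+}$ in $P$, contradicting embeddedness of $\widetilde{\Sigma}$. Being linked, $\{h_1,h_2\}$ bounds, for the orientations induced by $\widetilde{\Sigma}_{+}$, a unique embedded spherical helicoid $\Sigmav=\Sigmav^{\lambda}$ by \autoref{bergerhelicoid} (iii); and as recalled in \autoref{section21} (following \cite{ManzanoTorralbo} and \autoref{vundushape}) every spherical helicoid is the minimal sister of a piece of a vertical unduloid. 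It then remains to verify that $\Sigmanv$ is itself embedded, which in the present linearly dependent case ($F_1=-F_2$) can be read off from the explicit parametrisation of the sister of a half-horizontal-unduloid, or quoted from \cite{ManzanoTorralbo}.

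Finally I would argue $\Sigmav\neq\Sigmanv$: if they coincided as subsets of $\mathbb{S}^3(4H^2-1,H)$, then by the Daniel correspondence (\autoref{danielcorr}) one half of the horizontal unduloid $\widetilde{\Sigma}$ would be a piece of a vertical unduloid $\mathcal{U}$; real-analyticity and completeness of $\widetilde{\Sigma}$ would then force $\widetilde{\Sigma}=\mathcal{U}$, which is absurd since $\widetilde{\Sigma}$ is periodic along a horizontal geodesic while $\mathcal{U}$ is not. Hence for each $\lambda$ the pair $\{h_1^{\lambda},h_2^{\lambda}\}$ bounds the two distinct embedded minimal annuli $\Sigmav^{\lambda}$ and $\Sigmanv^{\lambda}$ with the stated correspondences, and $\lambda\mapsto\{h_1^{\lambda},h_2^{\lambda}\}$ is continuous and non-constant because the neck size of $\Sigmanv^{\lambda}$ --- equivalently the length of the vertical geodesic segment joining $h_1^{\lambda}$ and $h_2^{\lambda}$ --- varies with $\lambda$. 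The hard part will be the linking of the contours and the embeddedness of $\Sigmanv$ in the second step: these are statements about the conjugate Plateau construction of \cite{ManzanoTorralbo} rather than formal consequences of the correspondence, and a clean proof very likely needs the sister of the half-horizontal-unduloid written out explicitly; everything else is bookkeeping with \autoref{bergerhelicoid}, \autoref{vundushape}, \autoref{horizontalunduloid}, and \autoref{singleclass}.
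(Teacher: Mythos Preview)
Your approach inverts the paper's: you start from the horizontal unduloid in $\mathbb{H}^2\times\R$ and try to deduce the two Berger-sphere annuli, while the paper writes $h_1$, $h_2$ and a geodesic polygon $\Gamma_\lambda$ down explicitly and verifies everything by computation --- in particular that a vertical segment of length $\ell\in[0,2\tau\pi/\kappa]$ joins $h_1$ to $h_2$ (so the helicoid is embedded and lies in the range $c\in[0,1]$ of \autoref{vundushape}, hence its sister is an unduloid piece rather than a nodoid), and that four reflected copies of the Plateau graph over the convex sector $\Pi\circ\Gamma_\lambda$ assemble into the embedded annulus $\Sigmanv$.

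There is a genuine gap in your linking step. The Daniel correspondence matches data on the abstract domain, not on ambient points: if $f(q_1)=f(q_2)$ for boundary points $q_1,q_2$ on different components of $\partial\Omega$, nothing in \autoref{firstorderprop} forces $\tilde f(q_1)=\tilde f(q_2)$, so an intersection of $h_1$ and $h_2$ in the Berger sphere does not produce one for $\widetilde{h_1},\widetilde{h_2}$ in $P$. You also invoke embeddedness of $\widetilde\Sigma$, but only Alexandrov embeddedness of horizontal unduloids is established (see \autoref{alexandrovhorizontal}). Finally, the part you correctly flag as hard --- that $\Sigmanv$ descends to an embedded \emph{annulus} rather than remaining an infinite strip covering $h_1,h_2$ --- is precisely the content of the paper's explicit construction; since the Manzano--Torralbo surface is already built on the Berger side as a Plateau graph plus Schwarz reflections, ``writing the sister out explicitly'' collapses into the paper's proof.
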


\begin{proof}We use the construction of Manzano and Torralbo in \cite{ManzanoTorralbo} in order to establish existence of the embedded minimal annulus $\Sigmanv$. They constructed one quarter of a horizontal unduloid by solving a Plateau problem in the Berger sphere $\mathbb{S}^3(4H^2-1,H)$. The boundary curve consists of segments of three horizontal geodesics and one vertical geodesic; the Hopf projection is a convex sector in $\mathbb{S}^2(4H^2-1)$, so that there is a unique graphical solution for the Plateau problem.
	
	For $\kappa>0$ and $\tau\in\R$ let $\lambda\in\left[0,\frac{\pi}{2\sqrt{\kappa}}\right]$. Moreover let $h_1$ and $h_2$ be the curves
	\begin{align*}
	h_1(t)&=\begin{pmatrix}
	\cos\left(\frac{\sqrt{\kappa}}{2}t\right)\\
	\sin\left(\frac{\sqrt{\kappa}}{2}t\right)
	\end{pmatrix},\\
	h_2(t)&=\begin{pmatrix}
	\cos\left(\sqrt{\kappa}\lambda\right)\cos\left(\frac{\sqrt{\kappa}}{2}t\right)+i\sin\left(\sqrt{\kappa}\lambda\right)\sin\left(\frac{\sqrt{\kappa}}{2}t\right)\\
	-\cos\left(\sqrt{\kappa}\lambda\right)\sin\left(\frac{\sqrt{\kappa}}{2}t\right)+i\sin\left(\sqrt{\kappa}\lambda\right)\cos\left(\frac{\sqrt{\kappa}}{2}t\right)
	\end{pmatrix}.
	\end{align*}
	In view of \eqref{hgeodesic} we notice the following: $h_1$ is a horizontal geodesic through $(1,0)$ with horizontal field $F_1=E_1$ and $h_2$ is a horizontal geodesic through $(\cos(\sqrt{\kappa}\lambda),i\sin(\sqrt{\kappa}\lambda))$ with horizontal field $F_2=-E_1$. We claim that $h_1$ and $h_2$ have the following properties:
	\begin{enumerate}[(a)]
		\item There is a vertical geodesic $v$ joining $h_1\left(\frac{\pi}{2\sqrt{\kappa}}\right)$ and $h_2\left(-\frac{\pi}{2\sqrt{\kappa}}\right)$. The length $\ell$ of $v$ is in the interval $\left[0,\frac{2\tau}{\kappa}\pi\right]$. Thus $h_1$ and $h_2$ are, up to a left translation, a reparametrisation contained in a spherical helicoid $f^c$ for $c\in[0,1]$; compare with \autoref{vundushape}.
		
		\item There is a closed geodesic polygon $\Gamma_\lambda=\gamma_1\oplus\gamma_2\oplus\gamma_3\oplus\gamma_4$ such that $\gamma_1=h_1|_{[0,\pi/\sqrt{\kappa}]}$, $\gamma_2$ and $\gamma_4$ are horizontal geodesics intersecting $\gamma_1$ orthogonally, and $\gamma_3$ is a vertical geodesic joining $\gamma_2$ and $\gamma_4$. The Hopf projection $\Pi\circ\Gamma_\lambda$ is a convex sector in $\mathbb{S}^2(\kappa)$ for $\lambda$ as chosen above. By \cite{ManzanoTorralbo} it bounds a unique minimal graph $\Sigma_\lambda$ (graph with respect to $\Pi$).
		
		\item Geodesic reflection across $\gamma_3$ maps $h_1$ onto $h_2$. Reflections across $\gamma_2$ and $\gamma_4$ extend the surface $\Sigma_\lambda$ to an embedded minimal annulus bounded by $h_1$ and $h_2$.
	\end{enumerate}
	For (a) we exhibit $v$ explicitly. We note first 
	\[
	h_1\left(\frac{\pi}{2\sqrt{\kappa}}\right)=\frac{1}{\sqrt{2}}\begin{pmatrix}1\\1\end{pmatrix}\quad\mbox{ and }\quad h_2\left(-\frac{\pi}{2\sqrt{\kappa}}\right)=\frac{1}{\sqrt{2}}\begin{pmatrix}\exp\left(-i\sqrt{\kappa}\lambda\right)\\
	\exp\left(i\sqrt{\kappa}\lambda\right)                                                                                                                                                       \end{pmatrix}.
	\]
	The curve \[
	v(s)=\frac{1}{\sqrt{2}}\begin{pmatrix}\exp\left(-i\frac{\kappa}{4\tau}s\right)\\
	\exp\left(i\frac{\kappa}{4\tau}s\right)                             
	\end{pmatrix}
	\]
	is a vertical geodesic, see also \eqref{vgeodesic}, that joins $v(0)=\frac{1}{\sqrt{2}}(1,1)$ and
	\[
	v(\ell)=h_2\left(-\frac{\pi}{2\sqrt{\kappa}}\right)\mbox{ for }\ell=\frac{4\tau}{\sqrt{\kappa}}\lambda.
	\]
	By definition of $\lambda$ we have $0\leq\ell\leq \frac{2\tau}{\kappa}\pi$, as claimed.
	
	For (b) we define curves $\gamma_1$ to $\gamma_4$ as follows:
	\begin{align*}
	\gamma_1(t)&=\begin{pmatrix}
	\cos\left(\frac{\sqrt{\kappa}}{2}t\right)\\
	\sin\left(\frac{\sqrt{\kappa}}{2}t\right)
	\end{pmatrix}\mbox{,}&t\in\left[0,\frac{\pi}{\sqrt{\kappa}}\right]\mbox{,}\\
	\gamma_2(t)&=\begin{pmatrix}
	\cos\left(\frac{\sqrt{\kappa}}{2}t\right)\\
	i\sin\left(\frac{\sqrt{\kappa}}{2}t\right)
	\end{pmatrix}\mbox{,}&t\in\left[0,\lambda\right]\mbox{,}\\
	\gamma_3(t)&=\begin{pmatrix}
	\exp\left(i\frac{\kappa}{4\tau}t\right)\cos\left(\frac{\sqrt{\kappa}}{2}\lambda\right)\\
	\exp\left(-i\frac{\kappa}{4\tau}t\right)i\sin\left(\frac{\sqrt{\kappa}}{2}\lambda\right)
	\end{pmatrix}\mbox{,}&t\in\left[0,\frac{2\tau}{\kappa}\pi\right]\mbox{,}\\
	\gamma_4(t)&=\begin{pmatrix}
	i\sin\left(\frac{\sqrt{\kappa}}{2}t\right)\\
	\cos\left(\frac{\sqrt{\kappa}}{2}t\right)
	\end{pmatrix}\mbox{,}&t\in\left[0,\frac{\pi}{\sqrt{\kappa}}-\lambda\right]\mbox{.}
	\end{align*}
	\begin{center}
		\begin{figure}%[h]
			\def\svgwidth{0.7\linewidth}
			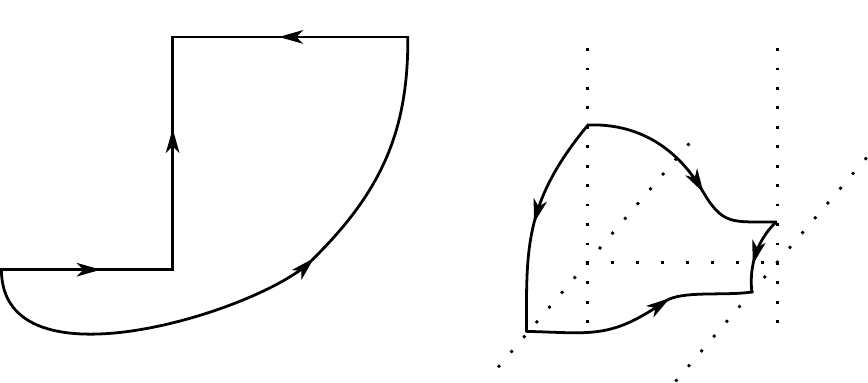
			\caption{Qualitative sketches of $\Gamma_\lambda$ on the left and of the sister contour $\widetilde{\Gamma_\lambda}$ on the right side; compare also with \cite[Figure 3]{ManzanoTorralbo}}
			\label{polygonhunduloid}
		\end{figure}
	\end{center}
	One can check that this defines a closed polygon $\Gamma_\lambda$ oriented as in \autoref{polygonhunduloid}. Looking at \eqref{hgeodesic} and \eqref{vgeodesic} we see the following: $\gamma_1$ is $E_1$-circle, $\gamma_2$ and $\gamma_4$ are $E_2$-circles (in particular they intersect $\gamma_1$ orthogonally) and $\gamma_3$ is a vertical geodesic. By choice of $\lambda$ the sector $\Pi\circ\Gamma_\lambda$ is convex.

	Finally we check (c). Let $\rho_0(z,w):=(z,-w)$ denote the geodesic reflection across the vertical geodesic
	\[
	v_0(t):=\begin{pmatrix}
	\exp\left(i\frac{\kappa}{4\tau}t\right)\\
	0
	\end{pmatrix}
	\]
	through $(1,0)$. A computation shows $\gamma_3(t)=\Lcal_{\gamma_2(\lambda)}(v_0(t))$, so that
	\[
	\rho(z,w):=\left(\Lcal_{\gamma_2(\lambda)}\circ\rho_0\circ\Lcal_{\left(\gamma_2(\lambda)\right)^{-1}}\right)(z,w)
	\]
	is the geodesic reflection across $\gamma_3$. Another computation yields $h_2(t)=\rho(h_1(t))$. Reflections across $\gamma_2$ and $\gamma_4$ extend the surface to the desired embedded minimal annulus $\Sigmanv$ bounded by $h_1$ and $h_2$: This is because the length of the segment $\gamma_1$ is one quarter of the length of $h_1$ or $h_2$, and thus we go once around $h_1$ and $h_2$, respectively. 
	
	The other solution $\Sigmav$ is the spherical helicoid bounded by $h_1$ and $h_2$. By property (a) they are joined by a segment of length $\ell\in\left[0,\frac{2\tau}{\kappa}\pi\right]$. This implies $\Sigmav$ is embedded, see \autoref{bergerhelicoid}. In \autoref{vundushape} we have shown that the sister surface is a piece of the vertical unduloid in $\mathbb{H}^2\times\R$.
	%Assume there existed an ambient isometry $\Psi$ such that $\Psi(\Sigmav)=\Sigmanv$. Then by \eqref{isometry} $\Psi$ is an isometry in any Berger sphere. The spherical helicoid $\Sigmav$ is a minimal surface in any Berger sphere, and so is $\Psi(\Sigmav)$. By \cite[Proposition 1]{Torralbo2}  $\Psi(\Sigmav)=\Sigma_{\operatorname{nv}}$ must be a spherical helicoid as well. However, spherical helicoids correspond to vertical unduloids and $\Sigmanv$ is the sister surface of a horizontal unduloid, a contradiction. Thus $\Sigmav$ and $\Sigmanv$ cannot be congruent.
	\end{proof}

\begin{Rem}\label{alexandrovhorizontal}
The embeddedness of horizontal unduloids in \cite{ManzanoTorralbo} relies on a generalised Krust theorem by Chuaqui and Hauswirth which has never been published. Therefore we cannot say that horizontal unduloids are embedded. It is known that the horizontal cylinder is embedded, which corresponds to $\lambda=\frac{\pi}{2\sqrt{\kappa}}$ in the proof above. In particular, the cylinder is Alexandrov embedded. Since Alexandrov embeddedness is preserved under continuous deformations, we know that some horizontal unduloids are in fact Alexandrov embedded.

Let us give a simple argument why all horizontal unduloids are Alexandrov embedded. The piece $\Sigma_\lambda$ in the Berger sphere is a minimal graph which is never vertical. The \mcH-sister surface $\widetilde{\Sigma}_\lambda$ in $\mathbb{H}^2\times\R$ is never vertical as well by the Daniel correspondence. The curve $\tilde{\gamma}_3$ is in a horizontal plane of $\mathbb{H}^2\times\R$ above which $\widetilde{\Sigma}_\lambda$ lies due to the maximum principle. The surface $\widetilde{\Sigma}_\lambda$ is on one side of the vertical plane containing $\tilde{\gamma}_1$: This is due to $\widetilde{\Sigma}_\lambda$ being never vertical and because of the maximum principle. Therefore $\widetilde{\Sigma}_\lambda$ bounds an immersed three-ball, that is, it is Alexandrov embedded. The surface $\widetilde{\Sigma}_\lambda$ extends to an Alexandrov embedded annulus by reflection through the vertical and horizontal planes.
\end{Rem}

%This contour can be obtained by looking at the case of $\mathbb{S}^3$. Without loss of generality we can assume $h_1^\prime=E_1$, that is, in $\mathbb{S}^3$ we have a $\jj$-Hopf circle and a $(-\jj)$-Hopf circle. In $\mathbb{S}^3$ they bound a spherical helicoid $f^n$ for $n\in[0,2\pi]$. In case $n\in(0,\pi]$ the corresponding sister surface is a vertical unduloid. Applying the flow $\Phi_\alpha$ to $f^n$ we see that  $h_1=f^n_{\pi/2}(\cdot,0)$, $h_3:=f^n_{\pi/2}(0,\cdot)$ and $h_4:=f^n_{\pi/2}(\pi/2,\cdot)$ are horizontal geodesics and $v:=f^n_{\pi/2}(\cdot,1/4)$ is a vertical geodesic. 

%Deforming the metric of $\mathbb{S}^3$ to that of $\mathbb{S}^3(4H^2-1,H)$ we obtain the configuration in \cite{ManzanoTorralbo} whose Plateau solution corresponds to one quarter of a horizontal unduloid in $\mathbb{H}^2\times\R$. Rotations of angle $\pi$ about $h_3$, $h_4$ and once about $v$ extend the Plateau solution to a minimal annulus bounded by $h_1$ and $h_2$. The sister surface is one half of a horizontal unduloid.
% \end{Ex}
To conclude this subsection, we complement our picture with unduloids in $\R^3$ and their sisters in $\mathbb{S}^3$:
\begin{Rem}\label{unduloidr3}
	Let $\widetilde{f^n}$ parametrise a vertical unduloid in $\R^3$ with \emph{neck size} $n\in(0,\pi)$. As has been shown in \cite{KarstenProceeding} or \cite{KarstenDiss}, the cousin in $\mathbb{S}^3$ is then parametrised by
	\[f^n(x,y)=\begin{pmatrix}
	\cos(x)\exp(-iny)\\\sin(x)\exp(i(2\pi-n)y)
	\end{pmatrix}.\]
	We see that $f^n$ agrees with the spherical helicoid\index{spherical helicoid} from \autoref{bergerhelicoid} for $\kappa=4$, $\tau=1$ with parameters $\varphi=\pi$ and $\ell=\frac{n}{2}$. For $n=\pi$ we get cylinders in $\R^3$ and for $n=0$ a chain of spheres. 
	
	%We call $f^n$ a \emph{spherical} helicoid because the rulings $x\mapsto f^n(x,y)$ join the great circles $(0,\exp(-iny))$ and $(0,-\exp(-iny))$. 

	%\subsubsection{Tilting an unduloid}\label{chapter412}
	
	%Suppose that in $\R^3$ we rotate the $\ii$-axis in the $\ii$-$\kk$-plane to obtain a tilted unduloid and consider the symmetry curves in the vertical mirror plane, perpendicular to $\jj$. Then the horizontal geodesics corresponding to them are $\jj$-Hopf circles. 
	
	We define the family $(\Phi_\theta)_{\theta\in\R}$ by
	\[
	\Phi_\theta\colon\mathbb{S}^3\to\mathbb{S}^3,\qquad\Phi_\theta(z,w):=\begin{pmatrix}\cos\left(\theta/2\right)z-\sin\left(\theta/2\right)w\\                     
	\sin\left(\theta/2\right)z+\cos\left(\theta/2\right)w\end{pmatrix}.
	\]
	This family satisfies $ \frac{d}{d\theta}\Phi_\theta(z,w)=\frac{1}{2}E_1$, i.e., $(\Phi_\theta)_{\theta\in\R}$ is the flow of a Killing vector field on $\mathbb{S}^3$ and thus a one-parameter family of isometries.
	
	The curves $h_1:=f^n(\cdot,0)$ and $h_2:=f^n(\cdot,1/2)$ are horizontal geodesics with horizontal field $\pm E_1$. They bound the embedded minimal annulus $\Sigmav:=f^n(\R\times[0,1/2])$ in $\mathbb{S}^3$. Therefore the boundary of $\Sigmav$ is invariant under the flow $(\Phi_\theta)_{\theta\in\R}$. Since $\Phi_\theta$ is an isometry, each surface $\Sigmav^\theta:=\Phi_\theta(\Sigmav)$ is an embedded minimal annulus bounded by $h_1$ and $h_2$. It can be shown that they correspond to tilted unduloids in $\R^3$, see \cite[Theorem 4.8]{VrzinaPhD} in the author's Ph.D. thesis.\end{Rem}

\subsection{A multiple solution theorem in the Berger spheres}\label{multiplesolution}

%Let $h_1$ and $h_2$ be as in \autoref{examplehorizontal}. We have shown that $h_1$ and $h_2$ bound at least two minimal annuli, which have a different geometric meaning for the \mcH-sister surface. Combining all the results up to this point we see that a \emph{tilted unduloid} in $\mathbb{H}^2\times\R$ can only exist if we have a multiple solution theorem for minimal annuli bounded by any pair of linked horizontal geodesics in $\mathbb{S}^3(4H^2-1,H)$. 
The Plateau problem of finding minimal surfaces bounded by two closed curves is interesting in itself. The solution  depends on the topology of the surface we are looking for. We are interested in embedded minimal annuli. For our particular problem, embedded minimal annuli bounded by linked horizontal geodesics in $\mathbb{S}^3(\kappa,\tau)$, two questions arise: Is there a minimal annulus bounded by these two curves? How many solutions exist?

We have already answered the first question: in the Berger spheres we have exhibited one explicit solution in \autoref{bergerhelicoid}, corresponding to vertical unduloids in $\mathbb{H}^2\times\R$. We have shown there is a unique embedded spherical helicoid for a given orientation of the boundary curves. Regarding the second question, we have seen in \autoref{examplehorizontal} that certain horizontal geodesics $h_1$ and $h_2$ with horizontal fields satisfying $F_1=-F_2$ bound at least two embedded minimal annuli.

For a general pair of linked horizontal geodesics $h_1$ and $h_2$ in the Berger spheres the question regarding the number of minimal annuli bounded by $h_1$ and $h_2$ is more delicate. In a series of papers, see \cite{Min89,Min92,Min93}, Min Ji proves a minimax principle to obtain a multiple solution theorem for a pair of linked curves in $\mathbb{S}^3$, or more generally in any compact Riemannian three-manifold. This minimax principle is applicable to a pair of linked horizontal geodesics in a Berger sphere: Given horizontal geodesics $h_1$ and $h_2$ in $\mathbb{S}^3(\kappa,\tau)$ with a prescribed orientation, there are at least two (possibly branched) minimal annuli bounded by $h_1$ and $h_2$. We do not include the details in the present paper since we need a stronger result. For instance, embeddedness of the solution is not clear. A more detailed account of this multiple solution theorem can be found in \cite[Chapter 5 and Appendix B]{VrzinaPhD}. 

\section{Existence of tilted unduloids follows from a uniqueness problem}\label{section3}
%We study the existence problem for singly periodic (Alexandrov) embedded \mcH-annuli in $\mathbb{H}^2\times\R$ with a \emph{tilted axis}, that is, an axis neither vertical nor horizontal. We refer to these surfaces as \emph{tilted unduloids}. %In \autoref{thmcylekt} we have seen that such surfaces exist. However, these surfaces are invariant under a one-parameter family of translations. Do singly periodic (Alexandrov) embedded \mcH-annuli in $\mathbb{H}^2\times\R$ with a \emph{tilted axis} exist that are only invariant under a discrete group of translations? 
%The results from Section 2 restrict the conjugate Plateau construction of tilted unduloids as follows:
We have narrowed our conjugate Plateau construction of tilted unduloids as follows:
%We propose the following conjugate Plateau construction\index{conjugate Plateau construction} to answer this question: 
\begin{itemize}
\item Choose horizontal geodesics $h_1$ and $h_2$ in $\mathbb{S}^3(4H^2-1,H)$ with linearly independent horizontal fields and fix their orientations. Construct an embedded minimal annulus $\Sigma_0$ bounded by the oriented circles $h_1$ and $h_2$ (different from the spherical helicoid), and consider the universal cover $\Sigma$ of $\Sigma_0$.
%\item Consider the (simply connected) universal covering $\Sigma$ of $\Sigma_0$. This is still a minimal surface.
\item Daniel's correspondence yields an \mcH-surface $\widetilde{\Sigma}$ in $\mathbb{H}^2\times\R$.
%\item Impose hypotheses on $\widetilde{\Sigma}$ and study the geometry of $\widetilde{\Sigma}$ in terms of $h_1$ and $h_2$.
\item Extend $\widetilde{\Sigma}$ to a tilted unduloid by reflection.
%\item Study the geometry of $\widetilde{\Sigma}$ in terms of $h_1$ and $h_2$.
\end{itemize}
In \autoref{section2} we observed that neither the boundary curves nor an abstract Plateau solution carry sufficient information for the sister surface to be a tilted unduloid. We show that this construction works under an additional assumption concerning the number of embedded minimal annuli bounded by horizontal geodesics (for a fixed orientation) in the Berger spheres. Namely, we need to assume there are exactly two such annuli: we are as yet unable to verify this assumption. In any case, the existence problem for tilted unduloids is reduced to a uniqueness problem in the Berger spheres.

\subsection{Conjugate construction of tilted unduloids in \texorpdfstring{$\mathbb{H}^2\times\R$}{H2xR} under hypotheses}\label{chapter61}
It is useful to introduce the following notation for the conjugate Plateau construction\index{conjugate Plateau construction} outlined in the introduction to this section:
\begin{Notation}
	Let $h_1$ and $h_2$ be linked horizontal geodesics in $\mathbb{S}^3(\kappa,\tau)$ with prescribed orientations. We set
	\begin{align*}
	\Mcal_0(h_1,h_2)&:=\left\{\Sigma_0\colon\Sigma_0\mbox{ is an embedded minimal annulus bounded by }h_1\mbox{ and }h_2\right\}\\
	\intertext{and}
	\Mcal(h_1,h_2)&:=\left\{\Sigma\colon\Sigma\mbox{ is the minimal universal cover of }\Sigma_0\in\Mcal_0\right\}.
	\end{align*}
	We also write $\Mcal_0=\Mcal_0(h_1,h_2)$ and $\Mcal=\Mcal(h_1,h_2)$.
\end{Notation}
We finally pose the existence question: Do tilted unduloids in $\mathbb{H}^2\times\R$ exist? An affirmative answer to our problem depends on the following hypotheses whose geometric meaning is explained in \autoref{tiltedsolution}:
%The answer to the question above is that we have tilted unduloids in $\mathbb{H}^2\times\R$ if for certain linked horizontal geodesics $h_1$ and $h_2$ the \mcH-sister surface $\widetilde{\Sigma}$ of a minimal surface (there are at least two!) $\Sigma\in\Mcal(h_1,h_2)$ satisfies three hypotheses (H1) to (H3). We define these first:
\begin{Def}[Hypotheses]
	Let $H>\frac{1}{2}$ and let $h_1$ and $h_2$ be linked horizontal geodesics with prescribed orientations in $\mathbb{S}^3(4H^2-1,H)$, having length $L$. Let $\Sigma\in\Mcal(h_1,h_2)$ and consider the \mcH-sister surface $\widetilde{\Sigma}$ in $\mathbb{H}^2\times\R$. By $P_j$ we denote the vertical plane containing the sister curve $\widetilde{h_j}$, where $j\in\{1,2\}$. The minimal surface $\Sigma$ \emph{satisfies (H1) to (H3)} if the following is true:
	\begin{enumerate}[({H}1)]
		\item Either there exists $j\in\{1,2\}$ such that $\widetilde{h_j}(0)\neq\widetilde{h_j}(L)$ or if $\widetilde{h_j}(0)=\widetilde{h_j}(L)$ for all $j\in\{1,2\}$ then $P_1\cap P_2=\emptyset$.
		\item The \mcH-surface $\widetilde{\Sigma}$ has a non-vertical axis.
		\item If $\widetilde{\Sigma}$ extends to an \mcH-annulus by reflections through $P_1$ and $P_2$ then the annulus is (Alexandrov) embedded.
	\end{enumerate}
\end{Def}
We state the main technical result:
\begin{Lem}\label{tiltedsolution}
	Let $H>\frac{1}{2}$ and consider linked horizontal geodesics $h_1$ and $h_2$ with horizontal fields $F_1$ and $F_2$ in $\mathbb{S}^3(4H^2-1,H)$. Let $\Sigma\in\Mcal(h_1,h_2)$.
	\begin{enumerate}[(i)]\item Assume the \mcH-sister surface $\widetilde{\Sigma}$ satisfies (H1). Then $\widetilde{\Sigma}$ is a singly periodic \mcH-surface in $\mathbb{H}^2\times\R$.
		%\item Suppose the horizontal fields $F_1$ and $F_2$ are linearly independent and $\widetilde{\Sigma}$ is satisfies (H2) and (H3). Then $P_1=P_2$.
		\item If $F_1$ and $F_2$ are linearly independent and $\widetilde{\Sigma}$ satisfies (H1) to (H3) then $\widetilde{\Sigma}$ extends to a tilted unduloid in $\mathbb{H}^2\times\R$.
		%  \item If $\widetilde{\Sigma}$ has a vertical axis then it is a piece of a vertical unduloid. In particular, there exists $\widetilde{\Sigma}$ with a non-vertical vertical axis.
	\end{enumerate}
\end{Lem}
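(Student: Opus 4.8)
The plan is to promote the deck transformation of the annulus $\Sigma_0$ to an ambient isometry of $\mathbb{H}^2\times\R$ that leaves $\widetilde{\Sigma}$ invariant, to identify that isometry as a translation along a geodesic (this is where (H1) enters), and then to extend $\widetilde{\Sigma}$ across its mirror planes and read off the slope of the axis from \autoref{horizontalunduloid}. Concretely, I would parametrise $\Sigma$ by $f\colon\R\times[0,1]\to\mathbb{S}^3(4H^2-1,H)$ so that the deck transformation of $\Sigma\to\Sigma_0$ is the shift $\delta(u,\theta)=(u+1,\theta)$ with $f\circ\delta=f$; since $\Sigma_0$ is an \emph{embedded} annulus its two boundary circles are not interchanged, so $\delta$ has this form and $f(\cdot,0)$, $f(\cdot,1)$ run exactly once around $h_1$, $h_2$ as $u$ increases by $1$ (so I normalise $u$ so that $[0,1]$ corresponds to arclength $[0,L]$ on each $h_j$). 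Let $\widetilde f$ be the sister immersion, $\widetilde{\Sigma}=\widetilde f(\R\times[0,1])$, and $\widetilde{h_j}=\widetilde f(\cdot,j-1)$. By \autoref{symmetryhom}~(i) each $\widetilde{h_j}$ is a vertical mirror curve lying in a vertical plane $P_j$, and Schwarz reflection extends $\widetilde{\Sigma}$ smoothly across the $P_j$ (dually, \autoref{extensionhom} extends $\Sigma$ by geodesic reflection about the $h_j$). Because $\delta$ preserves the induced metric, the angle function and the shape operator of $f$, and the Daniel correspondence transports all three data naturally to $\widetilde f$, the rigidity part of the correspondence provides an isometry $\Psi$ of $\mathbb{H}^2\times\R$ with $\widetilde f\circ\delta=\Psi\circ\widetilde f$; thus $\widetilde{\Sigma}$ is invariant under $\langle\Psi\rangle$, and since $\Psi$ sends the non-degenerate curve $\widetilde{h_j}$ into $P_j$ -- the unique complete totally geodesic surface containing it -- the isometry $\Psi$ stabilises both $P_1$ and $P_2$.

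For part (i) it suffices to show that a suitable power of $\Psi$ is a nontrivial translation along a geodesic, which by definition makes $\widetilde{\Sigma}$ singly periodic. Here I would use $\operatorname{Iso}(\mathbb{H}^2\times\R)\cong\operatorname{Iso}(\mathbb{H}^2)\times\operatorname{Iso}(\R)$ and write $\Psi=(\sigma,\rho)$, where $\sigma$ stabilises the base geodesics $\Gamma_j=\Pi(P_j)$. If $P_1=P_2$, then $\sigma$ stabilises a single geodesic of $\mathbb{H}^2$ and is a hyperbolic translation along it, possibly composed with a reflection; passing to $\Psi^2$ then yields a product of a (possibly trivial) hyperbolic translation along $\Gamma_1$ with a (possibly trivial) vertical translation, and every such product is a translation along some geodesic of $P_1$. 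If $P_1\neq P_2$, then $\sigma$ lies in the common stabiliser of two distinct geodesics of $\mathbb{H}^2$, which is finite, so a power of $\Psi$ is a vertical translation. The only way this analysis fails to deliver a \emph{nontrivial} translation is if a power of $\Psi$ is the identity and $\widetilde{\Sigma}$ is therefore compact; but a finite-order isometry stabilising a vertical plane has order at most two, and tracing this possibility back shows it forces the sister curves to be closed while the mirror planes intersect -- exactly the situation (H1) forbids. Hence $\widetilde{\Sigma}$ is singly periodic. I expect this step to be the main obstacle: pinning down $\Psi$ precisely and checking the finitely many degenerate configurations that (H1) is designed to exclude, rather than the rest of the argument.

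For part (ii), part (i) already gives that $\widetilde{\Sigma}$ is singly periodic, and (H2) gives a non-vertical axis $\widetilde\gamma$; revisiting the case analysis, a non-vertical axis forces $\sigma$ to be a genuine hyperbolic translation, hence $P_1=P_2=:P$ and $\widetilde\gamma\subset P$. Reflecting $\widetilde{\Sigma}$ through $P$ produces a complete \mcH-surface $\widetilde{\Sigma}^{\ast}=\widetilde{\Sigma}\cup\rho_P(\widetilde{\Sigma})$; cutting $\widetilde{\Sigma}^{\ast}$ along $P$ returns $\widetilde{\Sigma}$ and its mirror image glued along $\widetilde{h_1}$ and $\widetilde{h_2}$, so $\widetilde{\Sigma}^{\ast}$ is an annulus, singly periodic along $\widetilde\gamma$. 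By (H3) this annulus is (Alexandrov) embedded. Finally, $\widetilde\gamma$ cannot be horizontal: by \autoref{horizontalunduloid} a horizontal axis would force $F_1=-F_2$, contradicting the assumed linear independence of $F_1$ and $F_2$. Therefore $\widetilde\gamma$ is tilted and $\widetilde{\Sigma}^{\ast}$ is a tilted unduloid in $\mathbb{H}^2\times\R$.
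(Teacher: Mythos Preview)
Your overall strategy---promote the deck transformation to an ambient isometry $\Psi$ via the rigidity part of the Daniel correspondence, then classify $\Psi$ by its action on the two mirror planes---matches the paper's approach in case~(A). The gap is in your handling of the complementary case.

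You claim that if a power of $\Psi$ is the identity then ``the sister curves are closed while the mirror planes intersect -- exactly the situation (H1) forbids.'' This is backwards: (H1) \emph{permits} the situation in which both $\widetilde{h_j}$ are closed and $P_1\cap P_2=\emptyset$; what it forbids is closed curves together with $P_1\cap P_2\neq\emptyset$. In the permitted case your $\Psi$ is the identity (the frame data match because $\Sigma_0$ is an honest annulus), so your mechanism produces no periodicity at all. The paper handles this case by an entirely different device: reflect $\widetilde{\Sigma}$ through the two disjoint vertical planes $P_1$ and $P_2$; the composition of these reflections is a nontrivial translation along a horizontal geodesic, so the \emph{extended} surface is singly periodic with horizontal axis. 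You never invoke this reflection mechanism, so part~(i) is incomplete.

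The omission propagates to part~(ii). You write ``a non-vertical axis forces $\sigma$ to be a genuine hyperbolic translation, hence $P_1=P_2$,'' but in the case above the period map is the product of two reflections, not your $\Psi$, and the (horizontal, hence non-vertical) axis is perpendicular to both $P_1$ and $P_2$, which remain distinct. The paper therefore inserts an extra argument to eliminate this case before concluding $P_1=P_2$: assuming (H3), the extended annulus is Alexandrov embedded, so \autoref{singleclass} provides a vertical mirror plane $P$ containing the horizontal axis; the sister of a mirror curve in $P$ is a horizontal geodesic meeting $h_1$ and $h_2$ orthogonally, which forces $F_1=\pm F_2$ and contradicts linear independence. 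Only after this contradiction is case~(B) excluded and the conclusion $P_1=P_2$ legitimate; your final appeal to \autoref{horizontalunduloid} then finishes the proof exactly as you wrote.
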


\begin{proof}We prove (i). Let $L:=4\pi/\sqrt{4H^2-1}$ be the length of a horizontal geodesic in $\mathbb{S}^3(4H^2-1,H)$. Then we have $h_1(0)=h_1(L)$ and $h_2(0)=h_2(L)$. When applying the conjugate sister relation, the cases 
	\begin{equation}\mbox{(A)}\quad\widetilde{h_1}(0)\neq \widetilde{h_1}(L)\qquad\mbox{ or }\qquad\mbox{(B)}\quad\widetilde{h_1}(0)=\widetilde{h_1}(L).\label{caseab}\end{equation}
	can occur.
	
	Let us assume (A) first. Then $p:=\widetilde{h_1}(0)$ and $q:=\widetilde{h_1}(L)$ are distinct points in the same vertical plane $P_1$. There is an isometric translation $\Phi$, acting without fix points, such that $\Phi(p)=q$. We claim that the surface $\widetilde{\Sigma}$ is invariant under $\Phi$ and thus singly periodic. In order to show this we want to use the Fundamental Theorem of Surfaces in homogeneous three-manifolds, see \cite[Sections 3 and 4]{Daniel}. 
	
	We consider the annulus $\Sigma_0$ as a minimal immersion $f_0\colon\Omega_0\to\mathbb{S}^3(4H^2-1,H)$ defined on an annulus $\Omega_0\subset\R^2$. The universal cover $\Sigma$ of $\Sigma_0$ is then a minimal immersion $f\colon\Omega\to\mathbb{S}^3(4H^2-1,H)$ defined of the universal cover $\Omega$ of $\Omega_0$. The geometric data are the first fundamental form, the shape operator $S$, the vertical component of the Gau\ss{} map $\langle Z,N\rangle$, and the vector field $df^{-1}(T)$ as in Proposition 3.8. The \mcH-sister $\tilde{f}$ has the same first fundamental form (Daniel correspondence is isometric) and shape operator $\widetilde{S}=JS+H\id$. Furthermore we know $\langle \widetilde{Z},\widetilde{N}\rangle=\langle Z,N\rangle$ and $df^{-1}(T)=Jd\tilde{f}^{-1}(\widetilde{T})$. Since $\Sigma_0$ is an annulus we see that the geometric data at the points $p$ and $q$ are equal. That is, periodic data of the embedded minimal annulus $\Sigma_0$ in the Berger spheres imply periodic data for the \mcH-sister surface $\widetilde{\Sigma}$. Integrating along curves $\tilde{c}=\tilde{f}\circ\gamma$ joining $p$ and $q$, the Fundamental Theorem of Surfaces in homogeneous three-manifolds implies that 
	$\Phi$ generates the isometry group of the \mcH-surface $\widetilde{\Sigma}$. In particular, $\widetilde{h_2}$ cannot be a closed curve.
	% \begin{figure}%[ht]
	%  \def\svgwidth{0.7\linewidth}
	% \input{annuluscover.pdf_tex}
	%  \caption{Universal cover of an annulus}
	% \label{annuluscover}
	% \end{figure}
	
	Now we consider case (B). If $\widetilde{h_1}$ is closed then $\widetilde{h_2}$ is closed, too. Otherwise we can argue as in case (A) to show that the surface is singly periodic. We assume (H1), that is, the vertical planes $P_1$ and $P_2$ containing $\widetilde{h_1}$ and $\widetilde{h_2}$, respectively, are disjoint. Then we can reflect through $P_1$ and $P_2$ to obtain a singly periodic \mcH-annulus with a horizontal axis. 
	
	%To finish the proof we argue that $P_1$ and $P_2$ cannot intersect. Without loss of generality we can assume that $P_1$ and $P_2$ intersect along the fibre $\{(0,0)\}\times\R$ in $\mathbb{H}^2\times\R$, representing the hyperbolic plane $\mathbb{H}^2$ by the Poincar\'{e} disc model. After a finite number of reflections through the vertical planes $P_1$ and $P_2$ we arrive at the following configuration: The extended surface, still denoted by $\widetilde{\Sigma}$, is (Alexandrov-)embedded and the boundary is contained in a half-space defined by either a horizontal or a vertical plane. Moving planes from the half-space not containing the boundary of $\widetilde{\Sigma}$ leads to a symmetry plane and consequently shows that $\widetilde{\Sigma}$ is a compact surface, a contradiction.
	Now we prove (ii). We claim that (H2) and (H3) imply case (A) in the proof of (i). Assume that we were in case (B). Then we can extend $\widetilde{\Sigma}$ by reflections through $P_1$ and $P_2$ to an \mcH-annulus $\widetilde{\Sigma}$ with a horizontal axis. Hypothesis (H3) guarantees that $\widetilde{\Sigma}$ is (Alexandrov) embedded, hence it has a vertical mirror plane $P$ by \autoref{singleclass} (ii). Let $\widetilde{h}$ be a mirror curve in $P$, see also \autoref{annuluscaseb}.
	
	\begin{figure}[h]
		\def\svgwidth{0.55\linewidth}
		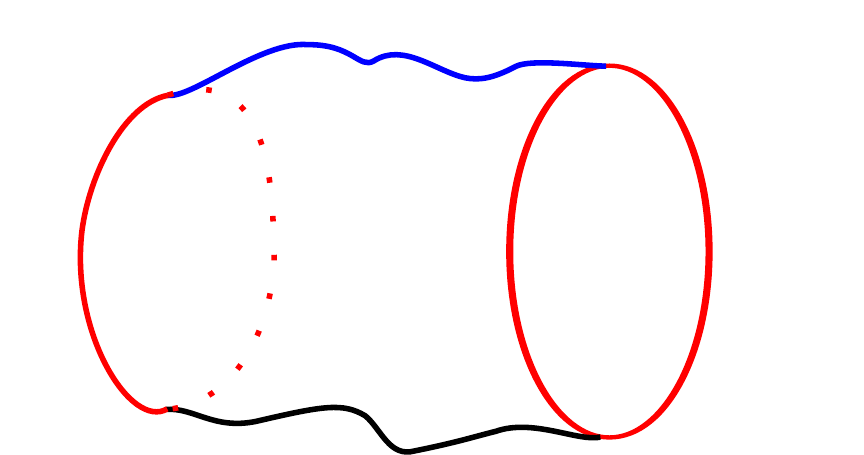
		\caption{Case (B) from \eqref{caseab} is impossible in (ii)}
		\label{annuluscaseb}
	\end{figure}
		
	Then the sister curve $h$ is a horizontal geodesic that intersects $h_1$ and $h_2$ orthogonally. This implies $F_1=\pm F_2$ for the horizontal fields $F_1$ and $F_2$, in contradiction to our assumption that these fields are linearly independent. 
	
	Thus $\widetilde{\Sigma}$ satisfies  case (A) and is singly periodic. By (H2) the axis is non-vertical, so that the vertical planes $P_1$ and $P_2$ must be equal by \autoref{singleprop} (i). Hence reflection $\sigma$ through $P_1=P_2$ extends $\widetilde{\Sigma}$ to a properly immersed \mcH-annulus $\widetilde{\Sigma}\cup\sigma(\widetilde{\Sigma})$. The singly periodic annulus $\widetilde{\Sigma}\cup\sigma(\widetilde{\Sigma})$ is (Alexandrov) embedded by (H3). The axis cannot be horizontal since \autoref{horizontalunduloid} implies $F_1=\pm F_2$, contradicting our assumption on $F_1$ and $F_2$. So indeed the axis is tilted.\end{proof}

\subsection{Discussion of hypotheses in construction and conjecture on tilted unduloids}\label{chapter62}In \autoref{tiltedsolution} we have shown that tilted unduloids in $\mathbb{H}^2\times\R$ exist if there are horizontal geodesics $h_1$ and $h_2$ with linearly independent horizontal fields $F_1$ and $F_2$ such that some $\Sigma\in\Mcal(h_1,h_2)$ satisfies (H1) to (H3). These hypotheses are implied by the following conjecture:

\begin{Conj}[Uniqueness]\label{uniquenessannulus}
	Let $h_1$ and $h_2$ be linked horizontal geodesics in a Berger sphere $\mathbb{S}^3(\kappa,\tau)$ with $\kappa\neq4\tau^2$. Then we have $\lvert\Mcal_0(h_1,h_2)\rvert=2$, that is, there are exactly two embedded minimal annuli bounded by $h_1$ and $h_2$.
\end{Conj}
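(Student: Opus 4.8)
Let us outline a possible strategy to attack this conjecture. Throughout one should work in the moduli space of oriented linked pairs $(h_1,h_2)$, which by \autoref{bergerhelicoid} is parametrised by the pitch $\ell\in(0,4\tau\pi/\kappa]$ of the vertical segment joining the two geodesics and the angle $\varphi\in[0,2\pi)$ between their horizontal fields. The lower bound $\lvert\Mcal_0(h_1,h_2)\rvert\ge2$ is the more accessible half: one embedded element, the spherical helicoid, is produced for every $(\ell,\varphi)$ by \autoref{bergerhelicoid}, and for a second one the plan is to start from the locus $F_1=-F_2$ of \autoref{examplehorizontal}, where the non-helicoidal annulus $\Sigmanv$ is a graph over a Hopf-convex sector and hence strongly stable, and to continue $\Sigmanv$ over the whole $(\ell,\varphi)$-family by the implicit function theorem. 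This requires (a) non-degeneracy of $\Sigmanv$, which should follow from its graphicality over a convex sector as in \autoref{examplehorizontal}, and (b) a compactness statement forbidding a sequence of embedded minimal annuli with convergent boundary data from degenerating---no curvature blow-up and no loss of the annular topology---for which one would combine interior curvature estimates for embedded minimal surfaces with barriers built from the fibres of the Hopf fibration. Alternatively one could invoke Min Ji's minimax and try to prove its solution embedded, but minimax embeddedness seems the harder route here.

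For the upper bound $\lvert\Mcal_0(h_1,h_2)\rvert\le2$ the intended route is to transfer the problem to $\mathbb{H}^2\times\R$ through the Daniel correspondence. Given $\Sigma_0\in\Mcal_0(h_1,h_2)$, one passes to the universal cover $\Sigma\in\Mcal(h_1,h_2)$ and its \mcH-sister $\widetilde{\Sigma}$ in $\mathbb{H}^2\times\R$; the analysis of \autoref{tiltedsolution} and \autoref{singleclass} should show that, once (H1) to (H3) are verified unconditionally for embedded $\Sigma_0$, the surface $\widetilde{\Sigma}$ closes up to a properly (Alexandrov) embedded \mcH-annulus. If Meeks' conjecture (quoted in the Introduction) holds, $\widetilde{\Sigma}$ is then an unduloid, and its vertical mirror plane cuts it into two halves whose boundary curves are horizontal geodesics by \autoref{vundushape} and \autoref{horizontalunduloid}. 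Thus $\Mcal_0(h_1,h_2)$ injects into the set of unduloid halves with boundary $(h_1,h_2)$; a bookkeeping of the unduloid moduli---vertical unduloids corresponding to helicoids with neck matched to $(\ell,\varphi)$, together with exactly one non-vertical unduloid realising $(h_1,h_2)$ on one of its halves---would then force the count $2$.

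The main obstacle is the upper bound, and it is twofold. First, Meeks' conjecture is itself open---the existence results of the present paper are precisely steps towards it---so phrased this way the upper bound is conditional; a direct Alexandrov moving-planes argument is an alternative, but it must be run on the $\mathbb{H}^2\times\R$ side, where reflections in vertical and horizontal planes are isometries, since the Berger sphere with $\kappa\neq4\tau^2$ admits no mirror-plane isometry, only half-turns about geodesics. Second, the hypothesis $\kappa\neq4\tau^2$ is essential and must enter exactly where one rules out extra solutions: at $\kappa=4\tau^2$ the isometry group jumps to $\mathsf{O}(4)$ and, as exhibited in \autoref{unduloidr3}, the locus $F_1=-F_2$ carries a whole one-parameter family $(\Sigmav^\theta)$ of embedded minimal annuli, so ``exactly two'' genuinely fails in the round sphere; any proof of the conjecture must use $\kappa\neq4\tau^2$ to collapse this circle of rotational solutions to the two honest ones. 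A purely variational alternative---showing that Min Ji's ``min'' and ``minimax'' are the only embedded critical points of area---would demand a mountain-pass uniqueness statement together with compactness of the space of embedded annuli bounded by $h_1$ and $h_2$, which does not appear any easier.
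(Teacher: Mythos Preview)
This statement is a \emph{conjecture}; the paper does not prove it, and your proposal is likewise a strategy sketch rather than a proof. That said, it is worth comparing your proposed route with the one the paper suggests, since they differ in an important way.

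For the lower bound $\lvert\Mcal_0\rvert\ge2$ you propose continuing the non-helicoidal solution $\Sigmanv$ from the locus $F_1=-F_2$ by the implicit function theorem, using graphicality over a Hopf-convex sector to get non-degeneracy. The paper instead suggests a Shiffman-type result for pairs lying on a Clifford torus (step (a)) followed by a degree argument \`a la Meeks--White (step (b)). Your continuation idea is reasonable, but note that $\Sigmanv$ is only graphical after quotienting by the half-turn symmetry about $\gamma_3$; once $F_1\neq\pm F_2$ this symmetry is lost, so non-degeneracy of the full annulus does not follow directly from graphicality of the quarter-piece.

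For the upper bound your route passes through the Daniel correspondence and invokes Meeks' conjecture in $\mathbb{H}^2\times\R$. You correctly flag that this is conditional, but the situation is worse than conditional: it is essentially circular. The paper's \autoref{perturbationresult} derives existence of tilted unduloids \emph{from} the present uniqueness conjecture, and verifying (H1)--(H3) for an arbitrary embedded $\Sigma_0$ (which your argument needs) is precisely what the continuity method of \autoref{perturbationresult} achieves by assuming $\lvert\Mcal_0\rvert=2$. So you cannot use the unduloid classification on the $\mathbb{H}^2\times\R$ side to bound $\lvert\Mcal_0\rvert$ without first knowing $\lvert\Mcal_0\rvert$. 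The paper's suggested route avoids this by working entirely inside the Berger sphere: the Meeks--White degree argument counts embedded minimal annuli directly, with the hypothesis $\kappa\neq4\tau^2$ entering to break the $\mathbb{S}^1$-symmetry of \autoref{unduloidr3} and collapse the critical orbit to two isolated solutions.

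Your observation that any proof must exploit $\kappa\neq4\tau^2$ to rule out the round-sphere degeneracy is exactly right and matches the paper's discussion.
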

%Heuristically, we believe the deformation from standard $3$-sphere to Berger sphere to be like the problem of finding two geodesics between points $p$ and $q$ on $\mathbb{S}^2$: If $q=-p$ (antipodal or symmetrical case), then there are infinitely many geodesics, if $q\neq -p$ (unsymmetrical case) we have exactly two geodesics, ``min'' and ``minimax''.

Why do we believe this conjecture to be valid? Let us first look at the lower dimensional problem of finding embedded geodesic arcs between two points $p$ and $q$ on the standard two-sphere $\mathbb{S}^2$: If $q=-p$ (antipodal or symmetrical case), then there are infinitely many embedded geodesic arcs (induced by a one-parameter family of isometries of $\mathbb{S}^2$), and if $q\neq -p$ (unsymmetrical case) we have exactly two geodesic arcs, ``min'' and ``minimax''.

In a Berger sphere two linked horizontal geodesics $h_1$ and $h_2$ always bound an embedded spherical helicoid. For a given orientation of the boundary curves, the embedded spherical helicoid is unique. Compare also with \autoref{bergerhelicoid}. For our problem, we recall \autoref{unduloidr3}: In $\mathbb{S}^3$, that is, for $\kappa=4\tau^2$ there is an $\mathbb{S}^1$-orbit of embedded minimal annuli bounded by horizontal geodesics $h_1$ and $h_2$ with horizontal fields $E_1$ and $-E_1$. In fact, this critical orbit for the Dirichlet energy is induced by a one-parameter family of isometries in $\mathbb{S}^3$ (symmetrical case). Deforming the metric from the symmetric standard three-sphere to a less symmetric Berger sphere, these do not remain isometries. A special case of this orbit arises when $h_1$ and $h_2$ are contained in a Clifford torus, i.e., the sister surface is one half of a vertical cylinder in $\R^3$ via the Lawson correspondence. One can verify by computation that exactly two surfaces in this orbit remain critical in $\mathbb{S}^3(\kappa,\tau)$, one of them corresponding to a piece of a vertical cylinder in $\mathbb{H}^2\times\R$ and the other surface to one half of a horizontal cylinder in $\mathbb{H}^2\times\R$. The solution corresponding to the vertical cylinder is ``min'' and the other one ``minmax''. Moreover, certain horizontal geodesics $h_1$ and $h_2$ with horizontal fields $E_1$ and $-E_1$ always bound at least two embedded minimal annuli by \autoref{examplehorizontal}.

For the general case we have at least two critical points for the Dirichlet energy, a ``min'' and a ``minimax'', according to a Minimax Principle due to Min Ji. Nevertheless, these techniques do not seem sufficient to verify the uniqueness conjecture; see also \autoref{multiplesolution}.

%We believe that only these two solutions preserve in the deformation from $\mathbb{S}^3$ to $\mathbb{S}^3(\kappa,\tau)$. Likewise we think that linearly independent horizontal fields in $\mathbb{S}^3$ also bound only two minimal annuli since the situation is ``unsymmetrical''.

A similar uniqueness result is available in $\R^3$: In \cite[Theorem 1.2]{MeeksWhite} Meeks and White showed that an extremal pair of smooth disjoint convex curves in distinct planes bounds at most two embedded minimal annuli. The case of two minimal annuli is realised by two generic solutions, a ``min'' and a ``minimax'', even though the case of no solutions occurs. 

While the geometric situation is different, we believe in the basic idea of proof to carry over. We suggest the following steps to prove the conjecture in $\mathbb{S}^3(\kappa,\tau)$:
\begin{enumerate}[(a)]
	\item \emph{Shiffman type theorem:} Show that horizontal geodesics on a Clifford torus with linearly dependent horizontal fields bound exactly two embedded minimal annuli in $\mathbb{S}^3(\kappa,\tau)$.
	\item \emph{Meeks-White degree argument:} Use a degree argument as in \cite{MeeksWhite} to prove that a deformation of the curves considered in (a) bounds only two embedded minimal annuli in $\mathbb{S}^3(\kappa,\tau)$.
\end{enumerate}

%A similar uniqueness result is available in $\R^3$: In \cite[Theorem 1.2]{MeeksWhite} Meeks and White showed  that an extremal pair of smooth disjoint convex curves in distinct planes bounds at most two minimal annuli. The case of two minimal annuli is realised by two generic solutions, a ``Min'' and a ``Minimax'', even though the case of no solutions occurs.
%We now assume \autoref{uniquenessannulus} to be true. For $c\in(0,1]$ let $f^c$ be the spherical helicoid from \autoref{vundushape}. 

If we assume the uniqueness conjecture to be true, we can prove existence of tilted unduloids as a perturbation of horizontal unduloids:
\begin{Theorem}[Perturbation result]\label{perturbationresult}Assume \autoref{uniquenessannulus} to be true. For $c\in(0,1]$ let $f^c$ be the spherical helicoid from \autoref{vundushape} and consider $h_1:=f^c(\cdot,0)$ as well as $h_2^\alpha:=f^c(\cdot,\alpha)$ for 
	\[\alpha\in\left(0,8H\frac{\pi}{(c+1)(4H^2-1)}\right)=:I.\]
	For fixed $c$, we then have two one-parameter families of embedded minimal annuli, namely, $(\Sigmav^\alpha)_{\alpha\in I}$ corresponding to pieces of vertical unduloids, and another family $(\Sigmanv^\alpha)_{\alpha\in I}$.
	Let \[\alpha_0=4H\frac{\pi}{(c+1)(4H^2-1)}.\] 
	Then there exists $\varepsilon\in(0,\alpha_0)$ such that for each $\alpha\in(\alpha_0-\varepsilon,\alpha_0+\varepsilon)\setminus\{\alpha_0\}$ the surface $\widetilde{\Sigmanv^\alpha}$ extends to a tilted unduloid in $\mathbb{H}^2\times\R$.
\end{Theorem}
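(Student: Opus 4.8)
Fix $c\in(0,1]$. The plan is to run a continuity argument anchored at the midpoint $\alpha_0$ of $I$. There the pair $(h_1,h_2^{\alpha_0})$ has linearly dependent horizontal fields and falls under \autoref{examplehorizontal}, so one of the two embedded minimal annuli it bounds is the minimal universal cover of a surface whose \mcH-sister is one half of a horizontal unduloid, and such a sister satisfies the hypotheses (H1)--(H3) of \autoref{tiltedsolution}. Since (H1)--(H3) are open under continuous deformation and we assume \autoref{uniquenessannulus}, I can deform this configuration as $\alpha$ moves off $\alpha_0$ into the regime of linearly independent horizontal fields, where \autoref{tiltedsolution}\,(ii) then produces a tilted unduloid.

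At $\alpha=\alpha_0$ one has $(c+1)\frac{4H^2-1}{4H}\alpha_0=\pi$, so by the meridian formula of \autoref{vundushape}\,(i) the curve $h_1=f^c(\cdot,0)$ has horizontal field $E_1$ and $h_2^{\alpha_0}=f^c(\cdot,\alpha_0)$ has horizontal field $-E_1$, and these linked curves are joined by a segment of the vertical geodesic $f^c(\pi/\sqrt{4H^2-1},\cdot)$ whose length lies in the range admitted by \autoref{examplehorizontal}. Up to an ambient isometry, $(h_1,h_2^{\alpha_0})$ is therefore one of the pairs of \autoref{examplehorizontal} (for a suitable $\lambda=\lambda(c)$), so it bounds the spherical helicoid $\Sigmav^{\alpha_0}$, a piece of a vertical unduloid, and a second embedded minimal annulus $\Sigmanv^{\alpha_0}$ whose minimal universal cover $\Sigma^{\alpha_0}\in\Mcal(h_1,h_2^{\alpha_0})$ has \mcH-sister $\widetilde{\Sigma^{\alpha_0}}$ equal to one half of a horizontal unduloid (or of a horizontal cylinder if $c=1$). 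This $\widetilde{\Sigma^{\alpha_0}}$ is singly periodic with a horizontal axis; its mirror curves $\widetilde{h_1},\widetilde{h_2}$ lie in the common vertical plane containing that axis and are not closed, so $\widetilde{\Sigma^{\alpha_0}}$ realises case (A) in the proof of \autoref{tiltedsolution} and satisfies (H1); its axis is horizontal, hence non-vertical, so (H2) holds; and reflecting $\widetilde{\Sigma^{\alpha_0}}$ through that vertical plane produces the full horizontal unduloid, which is Alexandrov embedded by \autoref{alexandrovhorizontal}, so (H3) holds.

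Now I perturb. Since $\kappa=4H^2-1\neq4H^2=4\tau^2$ and the curves $h_1,h_2^\alpha$ are disjoint (hence linked) for $\alpha\in I$, \autoref{uniquenessannulus} gives $\lvert\Mcal_0(h_1,h_2^\alpha)\rvert=2$ for every $\alpha\in I$; one element is the embedded spherical helicoid $\Sigmav^\alpha$ obtained by cutting $f^c$ between $h_1$ and $h_2^\alpha$ (embedded throughout $I$ as in the proof of \autoref{bergerhelicoid}\,(ii), since its meridians are horizontal geodesics with pairwise distinct horizontal fields, and a piece of a vertical unduloid by \autoref{vundushape}), and $\Sigmanv^\alpha$ is defined to be the other element. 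Because $\Sigmav^{\alpha_0}\neq\Sigmanv^{\alpha_0}$ by \autoref{examplehorizontal}, a compactness argument for embedded minimal annuli of bounded area with the smoothly varying linked boundary contours $(h_1,h_2^\alpha)$ shows that near $\alpha_0$ the two-point set $\Mcal_0(h_1,h_2^\alpha)$ splits into two branches depending continuously on $\alpha$; consequently $\Sigma^\alpha\in\Mcal(h_1,h_2^\alpha)$ and, via the Daniel correspondence, its \mcH-sister $\widetilde{\Sigma^\alpha}=\widetilde{\Sigmanv^\alpha}$ depend continuously on $\alpha$. The hypotheses then persist: being in case (A), i.e.\ $\widetilde{h_1}(0)\neq\widetilde{h_1}(L)$, holds at $\alpha_0$ and is open in $\alpha$, so (H1) holds; by \autoref{tiltedsolution}\,(i) each $\widetilde{\Sigma^\alpha}$ is then singly periodic with an axis varying continuously with $\widetilde{\Sigma^\alpha}$ and of slope near $\pi/2$, hence non-vertical, giving (H2); and $\widetilde{\Sigma^\alpha}\cup\sigma(\widetilde{\Sigma^\alpha})$, the reflection through the common vertical plane $P_1=P_2$, is a continuous deformation of the Alexandrov embedded horizontal unduloid above and so stays Alexandrov embedded (after passing to a compact fundamental domain via \autoref{singleprop}\,(ii)), giving (H3). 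Finally, for $\alpha$ near $\alpha_0$ with $\alpha\neq\alpha_0$ the angle $(c+1)\frac{4H^2-1}{4H}\alpha$ is close to $\pi$ but different from $\pi$, so $F_1=E_1$ and $F_2=\cos\!\big((c+1)\tfrac{4H^2-1}{4H}\alpha\big)E_1+\sin\!\big((c+1)\tfrac{4H^2-1}{4H}\alpha\big)E_2$ are linearly independent, and \autoref{tiltedsolution}\,(ii) shows that $\widetilde{\Sigmanv^\alpha}$ extends to a tilted unduloid in $\mathbb{H}^2\times\R$, which is the assertion.

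I expect the main obstacle to be the continuity of the branch $\alpha\mapsto\Sigmanv^\alpha$: one must know that the moduli space of embedded minimal annuli bounded by the moving linked contours is compact -- no collapse to a non-annular or branched limit, no loss of embeddedness in the limit -- so that \autoref{uniquenessannulus} genuinely pins down a single continuously varying second solution rather than merely an unordered pair at each parameter. A secondary technical point is the openness of (H3), i.e.\ the stability of Alexandrov embeddedness of the noncompact periodic \mcH-annulus, which after restriction to a compact fundamental domain amounts to the deformation stability already invoked in \autoref{alexandrovhorizontal}.
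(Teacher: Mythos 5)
Your argument follows the paper's proof in all essentials: anchor at $\alpha_0$ via \autoref{examplehorizontal} together with \autoref{uniquenessannulus} to identify $\Sigmanv^{\alpha_0}$ as one half of a horizontal unduloid, use $\lvert\Mcal_0(h_1,h_2^\alpha)\rvert=2$ to get continuous dependence of the second branch, verify that (H1)--(H3) and the linear independence of the horizontal fields persist for $\alpha$ near $\alpha_0$ with $\alpha\neq\alpha_0$, and conclude with \autoref{tiltedsolution}\,(ii). The only cosmetic difference is that the paper anchors the persistence of (H1) and (H2) on the non-vanishing vertical component of the normal along the boundary, i.e.\ \eqref{verticalnormal}, rather than on your endpoint-separation and axis-slope continuity, while it asserts the continuity of the branch $\alpha\mapsto\Sigmanv^\alpha$ (the point you rightly flag as the main gap) without further justification and likewise obtains (H3) from stability of Alexandrov embeddedness under continuous deformation.
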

\begin{proof}
	%Consider $h_1:=f^c(\cdot,0)$ and $h_2^\alpha:=f^c(\cdot,\alpha)$ for \[\alpha\in\left(0,8H\frac{\pi}{(c+1)(4H^2-1)}\right)=:I.\] For fixed $c$, we obtain two one-parameter families of minimal annuli, namely, $(\Sigma_1^\alpha)_{\alpha\in I}$ corresponding to pieces of vertical unduloids, and another family $(\Sigma_2^\alpha)_{\alpha\in I}$. 
	
	The assumption $\lvert\Mcal_0(h_1,h_2^\alpha)\rvert=2$ and \autoref{examplehorizontal} show that the \mcH-sister surface $\widetilde{\Sigmanv^{\alpha_0}}$ is one half of a horizontal unduloid in $\mathbb{H}^2\times\R$.
	
	Since $\lvert\Mcal_0(h_1,h_2^\alpha)\rvert=2$ for all $\alpha\in I$ we see that $\Sigmanv^\alpha$ depends continuously on $\alpha$. The surface $\Sigmanv^{\alpha_0}$ satisfies (H1) to (H3): Indeed, let $N^\alpha$ and $\widetilde{N}^\alpha$ denote the normals to $\Sigma_2^\alpha$ and $\widetilde{\Sigma_2^\alpha}$, respectively. Then we have
	\begin{equation}
	\langle N^{\alpha_0}\circ h_1,\xi\circ h_1\rangle=\left\langle \widetilde{N}^{\alpha_0}\circ{\widetilde{h_1}},\tilde{\xi}\circ{\widetilde{h_1}}\right\rangle\neq0\label{verticalnormal}
	\end{equation}
	since $c>0$ and because the upper half of the horizontal unduloid is never vertical by the construction of Manzano and Torralbo; see \cite{ManzanoTorralbo} or our \autoref{examplehorizontal} and note that $\widetilde{\gamma_1}$ in \autoref{polygonhunduloid} extends to $\widetilde{h_1}$.
	% \[
	%  \langle N^{\alpha_0}|_{h_2^{\alpha_0}},\xi|_{h_2^{\alpha_0}}\rangle=\langle \widetilde{N}^{\alpha_0}|_{\widetilde{h_2^{\alpha_0}}},\tilde{\xi}|_{\widetilde{h_2^{\alpha_0}}}\rangle\neq0
	% \]
	The continuity property shows that for $\alpha$ close to $\alpha_0$ the surface $\Sigmanv^\alpha$ has no branch points, i.e., it is an immersion, and \eqref{verticalnormal} is preserved for $\alpha$ close to $\alpha_0$. This verifies three properties:
	\begin{itemize}
		\item $\widetilde{h_2}^\alpha(0)\neq\widetilde{h_2}^\alpha(L)$: otherwise the normal along the sister curve $\widetilde{h_2}^\alpha$ in the vertical plane $P_2^\alpha$ would be horizontal, a contradiction to the choice of $\alpha$. Therefore $\widetilde{\Sigmanv^\alpha}$ is singly periodic as shown in \autoref{tiltedsolution} (i).
		
		\item The horizontal fields $F_1$ and $F_2^\alpha$ are linearly independent for all $\alpha\in I\setminus\{\alpha_0\}$ since $F_2^{\alpha_0}=-F_1$ by definition of $\alpha_0$. We refer to \autoref{vundushape} (i) for an explicit computation of the horizontal field.
		
		\item The axis is non-vertical, i.e., (H2) is satisfied: If $\widetilde{\Sigmanv^\alpha}$ had a vertical axis then the normal along $\widetilde{h_1}$ would be horizontal at some point, contradicting the choice of $\alpha$ once again. 
	\end{itemize}
	Recall that horizontal unduloids are Alexandrov embedded due to \autoref{alexandrovhorizontal}. Hypothesis (H3) follows since Alexandrov embeddedness is preserved under continuous deformations.  Such a deformation argument has been used to show that minimal spheres in a compact homogeneous three-manifold are  Alexandrov embedded, see \cite[Lemma 4.3 and Corollary 4.4]{MP}.
	
	This shows the existence of $\varepsilon\in(0,\alpha_0)$ with the claimed properties.%such that for each $\alpha\in(\alpha_0-\varepsilon,\alpha_0+\varepsilon)\setminus\{\alpha_0\}$ there is $\Sigma^\alpha \in\Mcal(h_1,h_2^\alpha)$ so that $\widetilde{\Sigma^\alpha}$ extends to a tilted unduloid in $\mathbb{H}^2\times\R$. 
\end{proof}

We believe the conclusion in the perturbation result to be true for all $\alpha\in I\setminus\{\alpha_0\}$ and formulate the conjecture on tilted unduloids as follows:
%Looking at \autoref{tiltedsolution} and \autoref{minmaxhorizontal} we conjecture the following:
\begin{Conj}[Tilted unduloid]\label{tiltedconjecture}Let $H>\frac{1}{2}$ and $c\in(0,1]$. We consider the spherical helicoid $f^c$ as in \autoref{vundushape}. Let $h_1:=f^c(\cdot,0)$ and $h_2^\alpha:=f^c(\cdot,\alpha)$ for $\alpha\in I$. Then for each $\alpha\in I\setminus\{\alpha_0\}$ there is $\Sigma^\alpha \in\Mcal(h_1,h_2^\alpha)$ so that $\widetilde{\Sigma^\alpha}$ extends to a tilted unduloid in $\mathbb{H}^2\times\R$.
\end{Conj}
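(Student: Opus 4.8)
The plan is to promote \autoref{perturbationresult} from a neighbourhood of $\alpha_0$ to all of $I\setminus\{\alpha_0\}$ by a continuity argument, with \autoref{uniquenessannulus} as the engine; accordingly I would assume \autoref{uniquenessannulus}, as \autoref{perturbationresult} already does. Fix $H>\tfrac12$ and $c\in(0,1]$. Granting uniqueness, for every $\alpha\in I$ the set $\Mcal_0(h_1,h_2^\alpha)$ is precisely $\{\Sigmav^\alpha,\Sigmanv^\alpha\}$; since the two solutions are never equal (otherwise $\lvert\Mcal_0\rvert=1$ there) and the two-element set $\Mcal_0(h_1,h_2^\alpha)$ varies continuously in $\alpha$ (by the uniqueness and elliptic compactness), $\alpha\mapsto\Sigmanv^\alpha$ is a continuous family. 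From the explicit horizontal field in \autoref{vundushape}~(i) one reads off that, for $\alpha\in I$, the relation $F_1=\pm F_2^\alpha$ holds only at $\alpha=\alpha_0$, so $F_1$ and $F_2^\alpha$ are linearly independent for every $\alpha\in I\setminus\{\alpha_0\}$. Hence, by \autoref{tiltedsolution}~(ii), it suffices to show $\Sigmanv^\alpha$ satisfies (H1) to (H3) for all such $\alpha$ and then take $\Sigma^\alpha:=\Sigmanv^\alpha$. I would put $A:=\{\alpha\in I\setminus\{\alpha_0\}\colon\Sigmanv^\alpha\ \text{satisfies (H1) to (H3)}\}$ and show $A$ is open, closed, and meets both connected components of $I\setminus\{\alpha_0\}$; since $I\setminus\{\alpha_0\}$ has exactly two components, $A$ is then all of it.

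Nonemptiness is immediate: \autoref{perturbationresult} gives $\varepsilon>0$ with $(\alpha_0-\varepsilon,\alpha_0+\varepsilon)\setminus\{\alpha_0\}\subseteq A$, which meets both components. For openness, note that if $\alpha\in A$ then by the proof of \autoref{tiltedsolution}~(ii) we are in case~(A), i.e.\ $\widetilde{h_1}^\alpha(0)\neq\widetilde{h_1}^\alpha(L)$; the displacement $d\big(\widetilde{h_1}^\alpha(0),\widetilde{h_1}^\alpha(L)\big)$ varies continuously, so case~(A)---hence (H1)---persists for $\alpha'$ near $\alpha$, the surface stays singly periodic with an axis whose slope varies continuously so (H2) persists, and (H3) persists because Alexandrov embeddedness is preserved under continuous deformations (\autoref{alexandrovhorizontal}, \cite[Lemma 4.3]{MP}).

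The substantive step is closedness, which I would argue as follows. Let $\alpha_n\to\alpha_*\in I\setminus\{\alpha_0\}$ with $\alpha_n\in A$. By uniqueness $\Sigmanv^{\alpha_n}\to\Sigmanv^{\alpha_*}$ smoothly, and the limit is again a genuine embedded minimal annulus bounded by the linked horizontal geodesics $h_1,h_2^{\alpha_*}$ with $\Sigmanv^{\alpha_*}\neq\Sigmav^{\alpha_*}$; by \autoref{bergerhelicoid}~(iii) it is then not a spherical helicoid, so $\widetilde{\Sigmanv^{\alpha_*}}$ is not a piece of a vertical unduloid (\autoref{vundushape}). Now apply \autoref{tiltedsolution} to $\Sigmanv^{\alpha_*}$. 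In case~(A) the sister $\widetilde{\Sigmanv^{\alpha_*}}$ is singly periodic, its axis cannot be vertical (it would lie in a vertical cylinder, and Mazet, as in \autoref{singleclass}, would make it a vertical unduloid, hence $\Sigmanv^{\alpha_*}$ a spherical helicoid) nor horizontal (\autoref{horizontalunduloid} would give $F_1=\pm F_2^{\alpha_*}$, forcing $\alpha_*=\alpha_0$), hence is tilted; since the extended surfaces converge to a genuine \mcH-annulus (no topology is lost, the boundary data staying non-degenerate) and Alexandrov embeddedness passes to the limit, (H3) holds, while (H1) and (H2) hold because we are in case~(A), so $\alpha_*\in A$. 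In case~(B) with (H1) satisfied, reflection through the disjoint planes $P_1,P_2$ extends $\widetilde{\Sigmanv^{\alpha_*}}$ to an Alexandrov embedded \mcH-annulus with horizontal axis, whence again $F_1=\pm F_2^{\alpha_*}$ and $\alpha_*=\alpha_0$, a contradiction. The one remaining possibility, case~(B) with (H1) violated, is the gap.

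I expect that gap to be the main obstacle: there $\widetilde{h_1}^{\alpha_*}$ and $\widetilde{h_2}^{\alpha_*}$ are closed mirror curves lying in vertical planes $P_1,P_2$ with $P_1\cap P_2\neq\emptyset$, bounding the \mcH-disk $\widetilde{\Sigmanv^{\alpha_*}}$, and one must show this is impossible for $\alpha_*\in I\setminus\{\alpha_0\}$---equivalently, that the sister curve $\widetilde{h_1}^\alpha$ does not close up off $\alpha_0$, i.e.\ the period of the conjectured tilted unduloid does not collapse. A natural route is to feed the Daniel first-order data (\autoref{firstorderprop}) along $h_1$ and $h_2^\alpha$ into a priori estimates for embedded minimal annuli in the Berger sphere---whose boundary fields stay linked and bounded away from $\pm F_1$ when $\alpha\neq\alpha_0$---so as to exclude two closed mirror curves lying in intersecting vertical planes. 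In the absence of Meeks' uniqueness for \mcH-annuli in $\mathbb{H}^2\times\R$, this Berger-side control, together with the unproven \autoref{uniquenessannulus} on which the whole scheme rests, is what keeps the statement conjectural.
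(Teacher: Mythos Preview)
The paper does not prove \autoref{tiltedconjecture}: it is stated as a conjecture, introduced by ``We believe the conclusion in the perturbation result to be true for all $\alpha\in I\setminus\{\alpha_0\}$ and formulate the conjecture on tilted unduloids as follows.'' There is therefore no paper proof to compare your attempt against; the paper only proves the perturbation result \autoref{perturbationresult} in a neighbourhood of $\alpha_0$ and leaves the global statement open.

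Your proposal is not a proof either, and you say so yourself. The open--closed continuation argument is a natural strategy, and the pieces you label ``nonemptiness'' and ``openness'' are essentially the content of \autoref{perturbationresult} together with the stability of Alexandrov embeddedness under deformation. The genuine obstruction is exactly the one you isolate: in the closedness step you cannot exclude that at some $\alpha_*\in I\setminus\{\alpha_0\}$ both sister curves $\widetilde{h_1}^{\alpha_*}$ and $\widetilde{h_2}^{\alpha_*}$ close up while their vertical planes $P_1$ and $P_2$ intersect, i.e.\ case~(B) with (H1) violated. Nothing in the paper, and nothing in your argument, rules this out; the appeal to ``a priori estimates for embedded minimal annuli in the Berger sphere'' via \autoref{firstorderprop} is a hope, not an argument. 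A secondary soft spot is your treatment of (H3) at the limit: Alexandrov embeddedness is an open condition under continuous deformation, but passing it to a limit requires compactness and non-degeneration of the annuli (no neck pinching, uniform area bounds), which you assert (``no topology is lost'') without justification. Even granting \autoref{uniquenessannulus}, these two points are precisely why the statement remains a conjecture in the paper.
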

The geometry is not completely analysed. Namely, we cannot determine the exact slope of the axis or the neck size of a tilted unduloid. 
%Moreover, in the case $\alpha\in\partial I=\{0,2\alpha_0\}$ we have $h_1=h_2^\alpha$, minimal annuli degenerate to minimal discs. The left-translations $\Lcal_{h_1(t)}$ along $h_1$ are isometries which leave $h_1$ invariant. Thus we have a one-parameter family of minimal discs bounded by $h_1$. These minimal discs correspond to one half of a \mcH-sphere in $\mathbb{H}^2\times\R$.

We recall that for the family of spherical helicoids $f^c$ the parameter $c\in(0,1]$ corresponds to the neck size of the vertical unduloid in $\mathbb{H}^2\times\R$. If we do not fix $c$ in the construction above, we obtain two two-parameter families of universal covers of embedded minimal annuli
\[
\left(\Sigmav^{\alpha,c}\right)_{(\alpha,c)\in I\times(0,1)}\quad\mbox{ and }\quad\left(\Sigmanv^{\alpha,c}\right)_{(\alpha,c)\in I\times(0,1)}.
\]
In the first family $\alpha$ parametrises what piece of a vertical unduloid we consider and $c$ is the neck size of the vertical unduloid. For the second family the following natural question arises, provided that the conjectures on the uniqueness of embedded minimal annuli and tilted unduloids are true:
\begin{Question}
	Does $\alpha$ parametrise the slope of the axis of $\widetilde{\Sigmanv^{\alpha,c}}$? Does $c$ correspond to the neck size in this family?
\end{Question}

%References
\bibliography{tiltedunduloid}
\bibliographystyle{amsalpha}
\end{document}